\documentclass{article}

\usepackage[twoside,
paperwidth=210mm,
paperheight=297mm,
textheight=622pt,
textwidth=468pt,
centering,
headheight=50pt,
headsep=12pt,
footskip=18pt,
footnotesep=24pt plus 2pt minus 12pt,
columnsep=2pc]{geometry}

\usepackage[auth-sc]{authblk}

\usepackage{mathtools}

\allowdisplaybreaks


\usepackage{amssymb}
\usepackage[mathscr]{eucal}

\usepackage{mismath}

\usepackage{hyperref}


\usepackage{bm} 

\usepackage{graphicx}
\graphicspath{{figs/}}

\usepackage{tikz}
\usetikzlibrary{math}
\usetikzlibrary{calc}

\usepackage{subcaption}

\usepackage{multirow}

\usepackage{makecell}
\setcellgapes{2pt}

\usepackage{booktabs}

\usepackage{siunitx}
\sisetup{
  output-exponent-marker=\ensuremath{\mathrm{e}},
  text-series-to-math=true,
  propagate-math-font=true
}

\usepackage{algorithm}
\usepackage{algpseudocode}



\usepackage[skins]{tcolorbox}
\newtcolorbox{stepbox}[2][]{%
  enhanced,
  attach boxed title to top center={yshift=-3mm,yshifttext=-1mm},
  colframe=blue!75!black,
  colbacktitle=red!80!black,
  fonttitle=\bfseries,
  title=#2,#1
}





\usepackage{amsthm}


\usepackage[capitalise]{cleveref}

\newtheorem{theorem}{Theorem}[section]
\newtheorem{lemma}[theorem]{Lemma}
\newtheorem{remark}[theorem]{Remark}


\theoremstyle{definition}

\newtheorem{assumption}{Assumption}

\theoremstyle{remark}

\providecommand{\keywords}[1]{\textbf{\textit{Keywords---}} #1} 

\crefname{equation}{}{}

\usepackage{lineno}
\usepackage[numbers,sort&compress]{natbib}

\numberwithin{figure}{section}
\numberwithin{table}{section}

\title{A locking free multiscale method for linear elasticity in stress-displacement formulation with high contrast coefficients}
\author[1]{Eric T. Chung}
\author[1]{Changqing Ye}
\author[1]{Xiang Zhong\thanks{Corresponding author.
(Email address: \href{mailto:xzhong@math.cuhk.edu.hk}{xzhong@math.cuhk.edu.hk})}}
\affil[1]{Department of Mathematics, The Chinese University of Hong Kong, Shatin, Hong~Kong~SAR, China.}
\date{}
\begin{document}
\maketitle

\begin{abstract}
  Achieving strongly symmetric stress approximations for linear elasticity problems in high-contrast media poses a significant computational challenge. Conventional methods often struggle with prohibitively high computational costs due to excessive degrees of freedom, limiting their practical applicability. To overcome this challenge, we introduce an efficient multiscale model reduction method and a computationally inexpensive coarse-grid simulation technique for linear elasticity equations in highly heterogeneous, high-contrast media. We first utilize a stable stress-displacement mixed finite element method to discretize the linear elasticity problem and then present the construction of multiscale basis functions for the displacement and the stress. The mixed formulation offers several advantages such as direct stress computation without post-processing, local momentum conservation (ensuring physical consistency), and robustness against locking effects, even for nearly incompressible materials. Theoretical analysis confirms that our method is inf-sup stable and locking-free, with first-order convergence relative to the coarse mesh size. Notably, the convergence remains independent of contrast ratios as enlarging oversampling regions. Numerical experiments validate the method’s effectiveness, demonstrating its superior performance even under extreme contrast conditions.
\end{abstract}

\keywords{multiscale method, mixed formulation, linear elasticity, high contrast} 

\section{Introduction}
Multiscale modeling plays an important role in computational mechanics, with applications ranging from structural analysis to geomechanics. However, resolving material heterogeneities at fine scales often leads to prohibitively expensive computations, particularly when addressing problems with complex microstructures. To address this challenge, multiscale model reduction techniques have emerged as a powerful framework, enabling efficient simulation by systematically capturing microscale features while drastically reducing computational costs. Existing multiscale model reduction methodologies include multiscale finite element methods \cite{YTY2009,HTY1997}, localized orthogonal decomposition method \cite{AMDP2014}, variational multiscale finite element methods \cite{HFJL1998}, numerical upscaling \cite{PDR2016}, heterogeneous multiscale methods \cite{EW2003,EWBE2003,BEYH2005} and so on. Recent advances have further expanded the scope of multiscale methods to address diverse challenges in real-world applications (see for instance, \cite{Eric2018, EricCS2018, EricSM2020, EricFu2023, EricGao2015, Ericwang2024, EricYE2016, EricYETY2016, EricYEFS2014, EricYELee2015, Eric_mix_2018, YeEric2023, LZET2023, FEW2020,YeJinEric2023,XLXE2025,EricYETY2023,FEL2024, YefuEric2023}).

In context of linear elasticity, we carefully concern two crucial aspects: the accurate representation of stress-strain relationships across multiple scales, and the preservation of symmetry conditions in stress approximations. 
There are numerous studies devoted to solving the linear elasticity problem. This problem becomes harder when the Lam$\rm \acute{e}$ coefficients are heterogeneous and with high contrast. Additionally, it is crucial to address the locking effect related to Poisson's ratio. A particular interest in the multiscale finite element analysis of elasticity can be found in \cite{EricFu2023, EricGao2015, Ericwang2024, EricYE2016, EricYEFS2014, FEW2020}. More precisely, \cite{EricFu2023} considers the contrasts based on the constraint energy minimizing generalized multiscale finite element method (CEM-GMsFEM) with discontinuous Galerkin coupling; \cite{EricGao2015} develops fast yet accurate full wavefield modeling method for elastic wave propagation
in heterogeneous, anisotropic media; \cite{Ericwang2024} studies the CEM-GMsFEM to solve the complex elastic PDEs with mixed inhomogeneous boundary conditions. 
However, most existing multiscale methods rely on standard $H^1$-elliptic displacement discretization, which can lead to numerically unstable stress estimates for nearly incompressible materials due to locking effects. Mixed methods offer a robust alternative, as they inherently avoid locking and provide accurate stress approximations independent of Poisson’s ratio.
Mixed multiscale methods are developed in \cite{EricCS2018, EricSM2020, EricYELee2015, Eric_mix_2018}. The basic idea of mixed generalized multiscale ﬁnite element method (GMsFEM) \cite{EricYELee2015} is to construct multiscale basis functions following a multiscale finite element framework and couple these basis functions using a mixed finite element method. A novel mixed CEM-GMsFEM is proposed in \cite{Eric_mix_2018} based on \cite{EricYELee2015}.  Following \cite{EricYELee2015}, \cite{EricCS2018} builds a multiscale coarse space for the stress with enforcement of strong symmetry. The coarse space for the stress is the union of an edge-based space and a vertex-based space. \cite{EricSM2020} develops the mixed CEM-GMsFEM for the first-order wave equation 
based on the idea of proposed in \cite{Eric_mix_2018}. 

Building upon the aforementioned research, we propose a locking free multiscale model reduction method for linear elasticity in stress-displacement formulation with high contrasts. Our approach extends the CEM-GMsFEM \cite{Eric2018} to a mixed formulation designed on special grids (see Figure \ref{coarse_bigger} and \cite{Zhong_2023}). This mixed framework ensures strongly symmetric stress approximations and robust handling of nearly incompressible materials, effectively circumventing locking effects.
More precisely, we focus on constructing a mixed version of the CEM-GMsFEM for linear elasticity in highly heterogeneous and high-contrast media. The computation of multiscale basis functions is carried out in two stages.
In the first stage, we construct auxiliary multiscale basis functions for the displacement on each coarse element by solving local spectral problems. These basis functions form an auxiliary multiscale space, which serves as the approximation space for the displacement field.
In the second stage, we utilize this auxiliary multiscale space to construct a multiscale space for the stress field. Notably, the displacement basis functions are locally supported (i.e., their supports coincide with the coarse elements), whereas the stress basis functions are supported on oversampled regions encompassing the displacement basis supports.
By appropriately selecting the number of oversampling layers, we achieve first-order convergence with respect to the coarse mesh size, independent of the local contrast. Moreover, our method maintains stability and convergence even in scenarios involving extremely high contrasts and nearly incompressible materials. We provide detailed theoretical analyses and numerical experiments to validate our approach’s efficacy across challenging scenarios.

Our main contributions are threefold. Firstly, it is well known that achieving strongly symmetric stress approximation is not easy. While traditional methods \cite{adam2002,arnold2002,arnold2008,DFLS2008, CB1978, BJ1968} require prohibitively many degrees of freedom to achieve strongly symmetric stress approximations, our multiscale model reduction technique dramatically reduces computational complexity while maintaining solution accuracy.
Secondly, building upon previous works \cite{EricFu2023, EricGao2015, Ericwang2024, EricYE2016, EricYEFS2014, FEW2020}, we present a mixed multiscale method for solving linear elasticity problems with highly heterogeneous and high-contrast coefficients. Our mixed formulation guarantees that the numerical approximation of stress remains locking-free with respect to Poisson's ratio. Lastly, compared with the previous work of the mixed GMsFEM for the planar linear elasticity \cite{EricCS2018}, we consider the problem in both two- and three dimensions and establish coarse-mesh-dependent convergence without using many basis functions. This is achieved by following the CEM-GMsFEM framework and developing a novel mixed CEM-GMsFEM specifically designed for linear elasticity equations in highly heterogeneous, high-contrast media.

We point out that for the reference solution, we implement a mixed finite element discretization using composite triangular elements. Following established methodologies \cite{DFLS2008, CB1978, BJ1968}, each element is subdivided into three subtriangles by connecting its barycenter to the vertices (see Figure \ref{coarse_bigger}). This composite element approach ensures strongly symmetric stress approximations and serves as the theoretical foundation for all fine-scale analyses in this study.
While this conventional mixed finite element method provides mathematically rigorous solutions, it suffers from computational inefficiency due to its high degrees of freedom. To address this practical concern in numerical simulations, we employ techniques from an equivalent mixed elements \cite[Section 6]{JJ2012} that maintains exact stress symmetry while significantly reducing the system size. This mixed approach also employs the composite elements (known as Hseih-Clough-Tocher grids, see \cite[Section 6]{JJ2012}) and has been successfully implemented in \cite{Zhong_2023}.

This paper is organized as follows. In Section \ref{sec: Preliminaries}, we present the model problem and introduce some notations. The construction of multiscale basis functions in the proposed method is described in Section \ref{Construction of the approximation space}. Detailed analyses for the stability and convergence are presented in Section \ref{Analysis for stability and convergence}. To validate the performance of the proposed method, we report some numerical experiments conducted on two different test models in Section \ref{Numerical experiments}. Finally, in Section \ref{conclusions}, we conclude the paper.

\section{Preliminaries}\label{sec: Preliminaries}
In this section, we first present the linear elastic problem in heterogeneous media. Then we introduce the key definitions and notation that will be employed throughout our subsequent analysis. Finally, we describe the mixed finite element method that serves as our reference solution methodology.
\subsection{Model problem}
We consider the linear elastic problem in the domain $\Omega\subset\mathbb{R}^n$:
\begin{subequations}
	\label{our pde}
	\begin{align}		
		\mathcal{A}\underline{\bm{\sigma}}=\underline{\bm{\epsilon}}(\mathbf{u})\quad &\rm in\quad\Omega ,\label{our pde_a}\\
		\textbf{div}\underline{\bm{\sigma}}=\mathbf{f}\quad &\rm in\quad\Omega,\label{oue pde_b}\\
		\underline{\bm{\sigma}}\mathbf{n}=\mathbf{0}\quad &\rm on\quad\partial\Omega,\label{our pde_d}
	\end{align}	
\end{subequations}
where $\underline{\bm{\sigma}}:\Omega\to\mathbb{R}^{n\times n}$ is the symmetric stress tensor, $\mathbf{u}$ denotes the displacement and $\underline{\bm{\epsilon}}(\mathbf{u})=\frac{1}{2}(\nabla\mathbf{u}+(\nabla\mathbf{u})^t)$ the linearized strain tensor. $\mathbf{n}$ denotes the unit outward normal to the domain. Notice that the source term $\mathbf{f}=(f_i)_{i=1}^n$ satisfies $\int_\Omega f_idx=0$ for $i=1,...,n$.  $\Omega\subset \mathbb{R}^n$ 
($n=2,3$) is a bounded and connected Lipschitz polyhedral domain occupied by an isotropic and linearly elastic
solid. $\partial\Omega$ is the boundary of the domain $\Omega$. $\mathcal{A}$ is the inverse of the elasticity operator, which is given by
$\mathcal{A}\underline{\bm{\tau}}:=\frac{1}{2\mu}\underline{\bm{\tau}}^D+\frac{1}{n(n\lambda+2\mu)}(tr\underline{\bm{\tau}})\bm{\mathit{I}},$
where $\lambda$ and $\mu$ are the Lam$\rm \acute{e}$ coefficients. $\bm{\mathit{I}}$ is the identity matrix of $\mathbb{R}^{n\times n}$ and the deviatoric tensor $\underline{\bm{\tau}}^D:=\underline{\bm{\tau}}-\frac{1}{n}(tr\underline{\bm{\tau}})\bm{\mathit{I}}$. Let $E$ be Young's modulus and $\nu$ be Poisson's ratio, then the Lam$\rm \acute{e}$ coefficients $\lambda$ and $\mu$ are denoted as
\begin{equation}
	\label{Lame constants}
	\lambda\coloneqq\frac{E\nu}{(1+\nu)(1-2\nu)},\quad  \mu\coloneqq\frac{E}{2(1+\nu)}.
\end{equation}
For nearly incompressible materials, $\lambda$ is large in comparison with $\mu$ (More precisely, $\lambda\to\infty$ as Poisson’s ratio $\nu\to 1/2$ while $\mu$ is bounded). In this paper, $\lambda$ and $\mu$ are highly heterogeneous in space and possibly high contrast. For $n=2,3$, by using Voigt notation, the inverse of the elasticity operator can be
expressed in terms of the following coefficient matrices:
\[
\mathcal{A} = \begin{pmatrix}
\lambda+2\mu & \lambda & 0 \\
\lambda & \lambda+2\mu & 0 \\
0 & 0 & 2\mu
\end{pmatrix}^{-1}(n=2),\text{ or }
\mathcal{A} = 
\begin{pmatrix}
\lambda + 2\mu & \lambda         & \lambda         & 0    & 0    & 0    \\
\lambda         & \lambda + 2\mu & \lambda         & 0    & 0    & 0    \\
\lambda         & \lambda         & \lambda + 2\mu & 0    & 0    & 0    \\
0               & 0               & 0               & 2\mu  & 0    & 0    \\
0               & 0               & 0               & 0    & 2\mu  & 0    \\
0               & 0               & 0               & 0    & 0    & 2\mu  \\
\end{pmatrix}^{-1}(n=3).
\]
We define the following bilinear forms. For a given domain $D\subset\Omega$,
\[ 
(\mathbf{u},\mathbf{v})_D=\int_D\mathbf{u}\cdot\mathbf{v}dx,\quad (\mathcal{A}\underline{\bm{\sigma}},\underline{\bm{\tau}})_D=\int_D\mathcal{A}\underline{\bm{\sigma}}\colon\underline{\bm{\tau}}dx.
\]
Their respective norms are defined to be 
\[
\norm{\mathbf{u}}_{L^2(D)}=(\mathbf{u},\mathbf{u})_D^{\frac{1}{2}},\quad \norm{\underline{\bm{\sigma}}}_{\mathcal{A}(D)}=(\mathcal{A}\underline{\bm{\sigma}},\underline{\bm{\sigma}})_D^{\frac{1}{2}}.
\]
We will drop the subscript $D$ when $D=\Omega$. It is clear that $\norm{\underline{\bm{\sigma}}}_{\mathcal{A}(D)}\leq C\norm{\underline{\bm{\sigma}}}_{L^2(D)}$, where $C$ does not depend on $\lambda$ but only depends on the lower bound of $\mu$.

Let $\underline{\bm{\mathit{Y}}}\coloneqq\{\underline{\bm{\tau}}\in\underline{\bm{\mathit{H}}}(\textbf{div};\Omega,\mathbb{S})\colon\underline{\bm{\tau}}\mathbf{n}=\mathbf{0}\hspace{0.5em} {\rm on\hspace{0.5em}\partial\Omega}\},$ a closed subspace of $\underline{\bm{\mathit{H}}}(\textbf{div};\Omega,\mathbb{S})$. The weak form of problem (\ref{our pde}) is to find $(\underline{\bm{\sigma}},\mathbf{u})\in \bm{\underline{H}}(\textbf{div};\Omega,\mathbb{S})\times \bm{\mathit{L}}^2(\Omega)$ such that 
\begin{subequations}
	\label{weak form for our pde}
	\begin{align}		(\mathcal{A}\underline{\bm{\sigma}},\underline{\bm{\tau}})+(\textbf{div}\underline{\bm{\tau}},\mathbf{u})&=0  \quad \forall \underline{\bm{\tau}}\in \underline{\bm{\mathit{Y}}}, \label{weak form for our pde_a}\\
		(\textbf{div}\underline{\bm{\sigma}},\mathbf{v})&= (\mathbf{f},\mathbf{v})\quad \forall \mathbf{v}\in \bm{\mathit{L}}^2(\Omega).\label{weak form for our pde_d}
	\end{align}	
\end{subequations}
We remark that problem (\ref{weak form for our pde}) is solved together with the condition $\int_\Omega u_i=0$ for $i=1,2$ ($\mathbf{u}=(u_1,u_2)$) to ensure the uniqueness of the displacement in our computation.
\begin{assumption}
	\label{0128b}
	(see\cite{Dau1988,Gri1986,SDR2019,Zhong_2023})$\exists$ $s_0\in(0,1/2)$ such that for all $s\in(0,s_0]$: There exists a constant $C^{reg}>0$, independent of $\lambda$, such that for all $\bm{\mathit{f}}\in\bm{\mathit{L}}^2(\Omega)$, if $(\underline{\bm{\sigma}},\mathbf{u})\in\underline{\bm{\mathit{Y}}}\times\bm{\mathit{L}}^2(\Omega)$	is the solution to (\ref{weak form for our pde}), then
	$||\underline{\bm{\sigma}}||_s+||\mathbf{u}||_{1+s}\leq C^{reg}||\bm{\mathit{f}}||_{L^2}.$
	Consequently, $(\underline{\bm{\sigma}},\mathbf{u})\in\underline{\bm{\mathit{H}}}^s(\Omega)\times\underline{\bm{\mathit{H}}}^{1+s}(\Omega)$.
\end{assumption}

 We will discuss the construction of multiscale basis functions in Section \ref{Construction of the approximation space}. We consider the multiscale spaces $U_\mathup{aux}$ and $\Sigma_\mathup{ms}$ for the approximation of displacement and stress, respectively. The multiscale solution $(\underline{\bm{\sigma}}_\mathup{ms},\mathbf{u}_\mathup{ms})\in\Sigma_\mathup{ms}\times U_\mathup{aux}$ is obtained by solving
\begin{subequations}
	\label{multiscale solution}
	\begin{align}		
		(\mathcal{A}\underline{\bm{\sigma}}_\mathup{ms},\underline{\bm{\tau}})+(\textbf{div}\underline{\bm{\tau}},\mathbf{u}_\mathup{ms})&=0  \quad \forall \underline{\bm{\tau}}\in \Sigma_\mathup{ms}, \label{multiscale solution_a}\\
		(\textbf{div}\underline{\bm{\sigma}}_\mathup{ms},\mathbf{v})&= (\mathbf{f},\mathbf{v})\quad \forall \mathbf{v}\in U_\mathup{aux}.\label{multiscale solution_d}
	\end{align}	
\end{subequations}

\begin{remark}
\label{generalized to more bdcs}
We stress that the homogeneous Neumann boundary conditions employed in (\ref{our pde}) are chosen for clarity of exposition, not a fundamental limitation of our methodology. Indeed, all theoretical analyses and numerical techniques developed in this work can be naturally extended to more general boundary conditions, including Dirichlet and mixed boundary scenarios. To illustrate this extensibility, consider the case of mixed boundary conditions where $\partial\Omega = \Gamma_D \cup \Gamma_N$ with $\Gamma_D \cap \Gamma_N = \emptyset$, where $\Gamma_D \neq \emptyset$ denotes the Dirichlet boundary and $\Gamma_N$ the Neumann boundary. In this setting, the function space $\underline{\bm{\mathit{Y}}}$ would be modified to:
$\underline{\bm{\mathit{Y}}}\coloneqq\{\underline{\bm{\tau}}\in\underline{\bm{\mathit{H}}}(\textbf{\textup{div}};\Omega,\mathbb{S})\colon\underline{\bm{\tau}}\mathbf{n}=\mathbf{0}\hspace{0.5em} {\rm on\hspace{0.5em}} \Gamma_N\}$.
The well-posedness of the mixed linear elasticity problem under such boundary conditions, including the inf-sup condition and coercivity, follows standard arguments \cite{arnold2002, arnold2008, DFLS2008, BF1991, Zhong_2023}. Consequently, the analytical framework developed for the homogeneous Neumann case adapts straightforwardly to mixed boundary conditions.
Furthermore, Section~\ref{different bdcs} presents comprehensive numerical experiments demonstrating the accuracy of our method for both Dirichlet and mixed boundary conditions, validating its robustness across different boundary constraint scenarios.
\end{remark}

\subsection{Definitions and notation}
Let $\mathcal{T}_H\coloneqq\cup_{i=1}^N\{K_i\}$ denote a conforming quasi-uniform partition of the domain $\Omega$ into quadrilateral elements, where $H$ represents the coarse mesh size and $N$ is the total number of coarse elements. We refer to $\mathcal{T}_H$ as the coarse grid, where each coarse element $K_i$ is further subdivided into a connected union of fine-grid blocks. The corresponding fine grid, denoted by $\mathcal{T}_h\coloneqq\cup_{i=1}^{N_h}\{T_i\}$ (with $N_h$ being the number of fine elements), is constructed as a refinement of $\mathcal{T}_H$.
For an illustrative example, see Figure \ref{coarse_bigger}, which depicts the coarse mesh, fine mesh, and an oversampling domain with a single oversampling layer.

For each coarse element \( K_i \) (\( 1 \leq i \leq N \)), we define the local auxiliary multiscale space as $U_{\mathrm{aux}}(K_i)\coloneqq\text{span}\{\mathbf{p}^i_j|1\leq j\leq l_i\}$, where the specific construction of $\{\mathbf{p}_j^i\}_{j=1}^{l_i}$ will be detailed in Section~\ref{Displacement basis function}. The global auxiliary multiscale finite element space \( U_{\mathrm{aux}} \) is then defined by
\(
U_{\mathrm{aux}} = \oplus_i U_{\mathrm{aux}}(K_i).
\)
We emphasize that \( U_{\mathrm{aux}} \) serves as the approximation space for displacement fields, as employed in problem \cref{multiscale solution}.

For each coarse element $K_i$ ($1 \leq i \leq N$), we define a bilinear form
\begin{equation}
\label{def_s_i}
s_i(\mathbf{p},\mathbf{q}) = \int_{K_i} \tilde{k} \mathbf{p} \cdot \mathbf{q} \, dx, \quad \tilde{k} = k H^{-2},
\end{equation}
for all $\mathbf{p}, \mathbf{q} \in \bm{\mathit{L}}^2(K_i)$, where $k = \lambda + 2\mu$ represents the composite material parameter. We assume the normalization $s_i(\mathbf{p}^i_j,\mathbf{p}^i_j)=1$. For each local auxiliary space $U_{\mathrm{aux}}(K_i)$ ($1 \leq i \leq N$), the bilinear form $s_i$ defined in (\ref{def_s_i}) induces an inner product with the associated norm
\[
\norm{\mathbf{p}}_{s(K_i)} = s_i(\mathbf{p},\mathbf{p})^{1/2}, \quad \forall \mathbf{p} \in U_{\mathrm{aux}}(K_i).
\]
These local definitions naturally extend to the global auxiliary multiscale space $U_{\mathrm{aux}}$ through
\[
s(\mathbf{p},\mathbf{q}) = \sum_{i=1}^N s_i(\mathbf{p},\mathbf{q}) \quad \text{and} \quad \norm{\mathbf{p}}_s = s(\mathbf{p},\mathbf{p})^{1/2}, \quad \forall \mathbf{p} \in U_{\mathrm{aux}}.
\]
We remark that $s(\cdot,\cdot)$ and $\norm{\cdot}_s$ also constitute a valid inner product and norm on the space $\bm{\mathit{L}}^2(\Omega)$. Next, we define the projection operator $\pi_i \colon \bm{\mathit{L}}^2(K_i) \to U_{\mathrm{aux}}(K_i)$ 
with respect to the inner product $s_i(\cdot,\cdot)$. Specifically, for any $\mathbf{q} \in \bm{\mathit{L}}^2(K_i)$, 
the operator $\pi_i$ is given by
\[
\pi_i(\mathbf{q}) = \sum_{j=1}^{l_i} s_i(\mathbf{q}, \mathbf{p}_j^i)\mathbf{p}_j^i.
\]
Similarly, we define the global projection operator $\pi \colon \bm{\mathit{L}}^2(\Omega) \to U_{\mathrm{aux}}$ 
with respect to the inner product $s(\cdot,\cdot)$. For any $\mathbf{q} \in \bm{\mathit{L}}^2(\Omega)$,
this operator takes the form
$$
\pi(\mathbf{q}) = \sum_{i=1}^N \sum_{j=1}^{l_i} s_i(\mathbf{q}, \mathbf{p}_j^i) \mathbf{p}_j^i.
$$
It follows immediately that $\pi$ admits the decomposition $\pi = \sum_{i=1}^N \pi_i$.

To facilitate the analysis in Section~\ref{Analysis for stability and convergence}, we introduce the following bilinear forms on the restricted space $\underline{\bm{Y}}|_D$, where $D \subseteq \Omega$ denotes an arbitrary subdomain:
\[
(\underline{\bm{\sigma}},\underline{\bm{\tau}})_{\Sigma,D} = (\mathcal{A}\underline{\bm{\sigma}},\underline{\bm{\tau}})_D + \int_D \tilde{k}^{-1} \mathbf{div}\,\underline{\bm{\sigma}} \cdot \mathbf{div}\,\underline{\bm{\tau}} \, dx,
\quad
(\underline{\bm{\sigma}},\underline{\bm{\tau}})_{\mathcal{A},D,\textbf{div}} = (\mathcal{A}\underline{\bm{\sigma}},\underline{\bm{\tau}})_D + (\mathbf{div}\,\underline{\bm{\sigma}}, \mathbf{div}\,\underline{\bm{\tau}})_D
\]
and their associated norms
\[
\norm{\underline{\bm{\sigma}}}_{\Sigma(D)} = \left( (\mathcal{A}\underline{\bm{\sigma}},\underline{\bm{\sigma}})_D + \int_D \tilde{k}^{-1} \mathbf{div}\,\underline{\bm{\sigma}} \cdot \mathbf{div}\,\underline{\bm{\sigma}} \, dx \right)^{1/2},
\quad
\norm{\underline{\bm{\beta}}}_{\mathcal{A}(D),\textbf{div}}=\left((\mathcal{A}\underline{\bm{\beta}},\underline{\bm{\beta}})_D+(\textbf{div}\underline{\bm{\beta}},\textbf{div}\underline{\bm{\beta}})_D\right)^{1/2}.
\]
We shall omit the subscript $D$ when the domain under consideration is the entire $\Omega$ (i.e., when $D = \Omega$).

\subsection{Mixed finite element method}
We discretize problem (\ref{weak form for our pde}) by mixed finite element method proposed by Falk etc.\ on the fine grid $\mathcal{T}_h$ (see for instance \cite{DFLS2008, CB1978, BJ1968}). For example, following the construction in \cite[Section 2.1]{DFLS2008}, we partition each triangle $T \in \mathcal{T}_h$ into three sub-triangles $\{t_i\}_{i=1}^3$ by connecting its barycenter to the vertices, forming a composite element $T = \bigcup_{i=1}^3 t_i$. For polynomial degree $k \geq 2$, we define the finite element spaces as follows:
$\underline{\bm{\Sigma}}_h \coloneqq \{\underline{\bm{\tau}} \in \underline{\bm{\mathit{H}}}(\textbf{div};\Omega,\mathbb{S}) \colon \underline{\bm{\tau}}|_{t_i} \in \underline{\mathcal{P}}_k(t_i,\mathbb{S}), \text{ $\underline{\bm{\tau}}\mathbf{n}$ continuous across internal edges}, 
 \textbf{div}\underline{\bm{\tau}} \in \mathcal{P}_{k-1}(T,\mathbb{R}^2)\}$, 
$\bm{\mathit{U}}_h \coloneqq \{\mathbf{v} \in \bm{\mathit{L}}^2(\Omega) \colon \mathbf{v}|_{T} \in \mathcal{P}_{k-1}(T,\mathbb{R}^2)\}$.
Then the fine problem is to find $(\underline{\bm{\sigma}}_h,\mathbf{u}_h)\in \underline{\bm{\mathit{\Sigma}}}_h\times\bm{\mathit{U}}_h$ such that
\begin{subequations}
	\label{discrete formula}
	\begin{align}		
		(\mathcal{A}\underline{\bm{\sigma}}_h,\underline{\bm{\tau}}_h)+(\textbf{div}\underline{\bm{\tau}}_h,\mathbf{u}_h)&=0  \quad \forall \underline{\bm{\tau}}_h\in \underline{\bm{\mathit{\Sigma}}}_h, \label{discrete formula_a}\\
		(\textbf{div}\underline{\bm{\sigma}}_h,\mathbf{v}_h)&= s(\pi(\tilde{k}^{-1}\mathbf{f}),\mathbf{v}_h)\quad \forall \mathbf{v}_h\in \bm{\mathit{U}}_h,\label{discrete formula_d}
	\end{align}	
\end{subequations} 
The well-posedness of Eqs. (\ref{discrete formula}) is standard:

(i) the discrete inf-sup condition: $\exists\beta>0$
independent of $h$ such that for all $\mathbf{w}_h\in\bm{\mathit{U}}_h$ with $\int_\Omega\mathbf{w}_h=\mathbf{0}$, we have
\begin{equation}
\label{discrete inf-sup}
\sup_{\underline{\bm{\tau}}_h\in\underline{\bm{\mathit{\Sigma}}}_h}\frac{(\textbf{div}\underline{\bm{\tau}}_h,\mathbf{w}_h)}{\norm{\underline{\bm{\tau}}_h}_{\underline{\bm{\mathit{H}}}(\textbf{div};\Omega)}}\geq \beta\norm{\mathbf{w}_h}_{L^2}.
\end{equation}

(ii) the coercivity in the kernel condition $(\mathcal{A}\underline{\bm{\tau}},\underline{\bm{\tau}})\geq C||\underline{\bm{\tau}}||^2_{\underline{\bm{\mathit{H}}}(\textbf{div};\Omega)}$,
for all $\underline{\bm{\tau}}$ in the kernel given by $\bm{\mathit{K}}$:= $\{\underline{\bm{\tau}}\in\underline{\bm{\mathit{\Sigma}}}_h: (\bm{\omega}_h,\textbf{div}\underline{\bm{\tau}})_{\Omega}=0\hspace{0.5em} {\rm for\hspace{0.5em} all\hspace{0.5em}} \bm{\omega}_h\in\bm{\mathit{U}}^h\hspace{0.5em}\}.$ Here the constant $C$ doesn't depend on $\lambda$.

We remark that $(\underline{\bm{\sigma}}_h,\mathbf{u}_h)$ is considered as the reference solution in our analysis. Moreover, let $\bm{\mathit{P}}\colon \bm{\mathit{L}}^2(\Omega)\to\bm{\mathit{U}}_h$ be the $L^2$ projection from  $\bm{\mathit{L}}^2(\Omega)$ onto the fine grid space $\bm{\mathit{U}}_h$. That is, for any $\mathbf{w}\in \bm{\mathit{L}}^2(\Omega)$, $\bm{\mathit{P}}\mathbf{w}$ is the unique function in $\bm{\mathit{U}}_h$ such that
$(\bm{\mathit{P}}\mathbf{w},\mathbf{v}_h)=(\mathbf{w},\mathbf{v}_h)$ for all $\mathbf{v}_h\in\bm{\mathit{U}}_h$.

Given $t>0$, let $\mathbf{\Pi}_h: \underline{\bm{\mathit{H}}}^t(\Omega)\cap\bm{\underline{H}}(\textbf{div};\Omega,\mathbb{S})\to \underline{\bm{\mathit{\Sigma}}}_h$ be the interpolation operator that satisfies the commuting diagram property (see \cite{BF1991,DFLS2008}):
\begin{equation}
\label{commuting diagram}
\textbf{div}\mathbf{\Pi}_h\underline{\bm{\tau}}=\bm{\mathit{P}}\textbf{div}\underline{\bm{\tau}},\quad\forall \underline{\bm{\tau}}\in\underline{\bm{\mathit{H}}}^t(\Omega)\cap\underline{\bm{\mathit{H}}}(\textbf{div};\Omega).
\end{equation}
In addition, it's well known that there exists $C>0$, independent of $h$, such that for each $\underline{\bm{\tau}}\in\underline{\bm{\mathit{H}}}^t(\Omega)\cap\bm{\underline{H}}(\textbf{div};\Omega,\mathbb{S})$ (see \cite{BBF2013}): 
\begin{equation}
	\label{ii}
	||\underline{\bm{\tau}}-\mathbf{\Pi}_h\underline{\bm{\tau}}||_{0,\Omega}\leq Ch^{\rm min\{t, k+2\}}||\underline{\bm{\tau}}||_{t,\Omega},\quad \forall t>1/2.
\end{equation}
For less regular tensorial fields, we have the following error estimate (see \cite[Theorem 3.16]{H2002})
\begin{equation}
	\label{jj}
	||\underline{\bm{\tau}}-\mathbf{\Pi}_h\underline{\bm{\tau}}||_{0,\Omega}\leq Ch^t(||\underline{\bm{\tau}}||_{t,\Omega}+||\textbf{div}\underline{\bm{\tau}}||_{0,\Omega}),\quad \forall t\in(0,1/2].
\end{equation}
For any $t>0$, we also have
\begin{equation}
	\label{ll}
	||\mathbf{u}-\bm{\mathit{P}}\mathbf{u}||_{0,\Omega}\leq Ch^{\rm min\{t,k+1\}}||\mathbf{u}||_{t,\Omega}\quad\forall \mathbf{u}\in{\bm{\mathit{H}}}^t(\Omega).
\end{equation}

\section{The multiscale method}
\label{Construction of the approximation space}
In this section, we will present the construction of our multiscale method. The construction of the basis functions are developed on the coarse mesh illustrated in Figure~\ref{coarse_bigger} and divided into two stages.
The first stage consists of constructing the multiscale space for the displacement $\mathbf{u}$ (Section \ref{Displacement basis function}). 
In the second stage, we will use the multiscale space for displacement to construct a multiscale space for the stress $\bm{\underline{\sigma}}$ (Section \ref{Stress basis function}). 
We point out that the supports of displacement basis are the coarse elements. 
For stress basis functions, the support is an oversampled region containing the support of displacement basis functions. 
\begin{figure}[tbph] 
		\centering 
		\includegraphics[height=5cm,width=9.3cm]{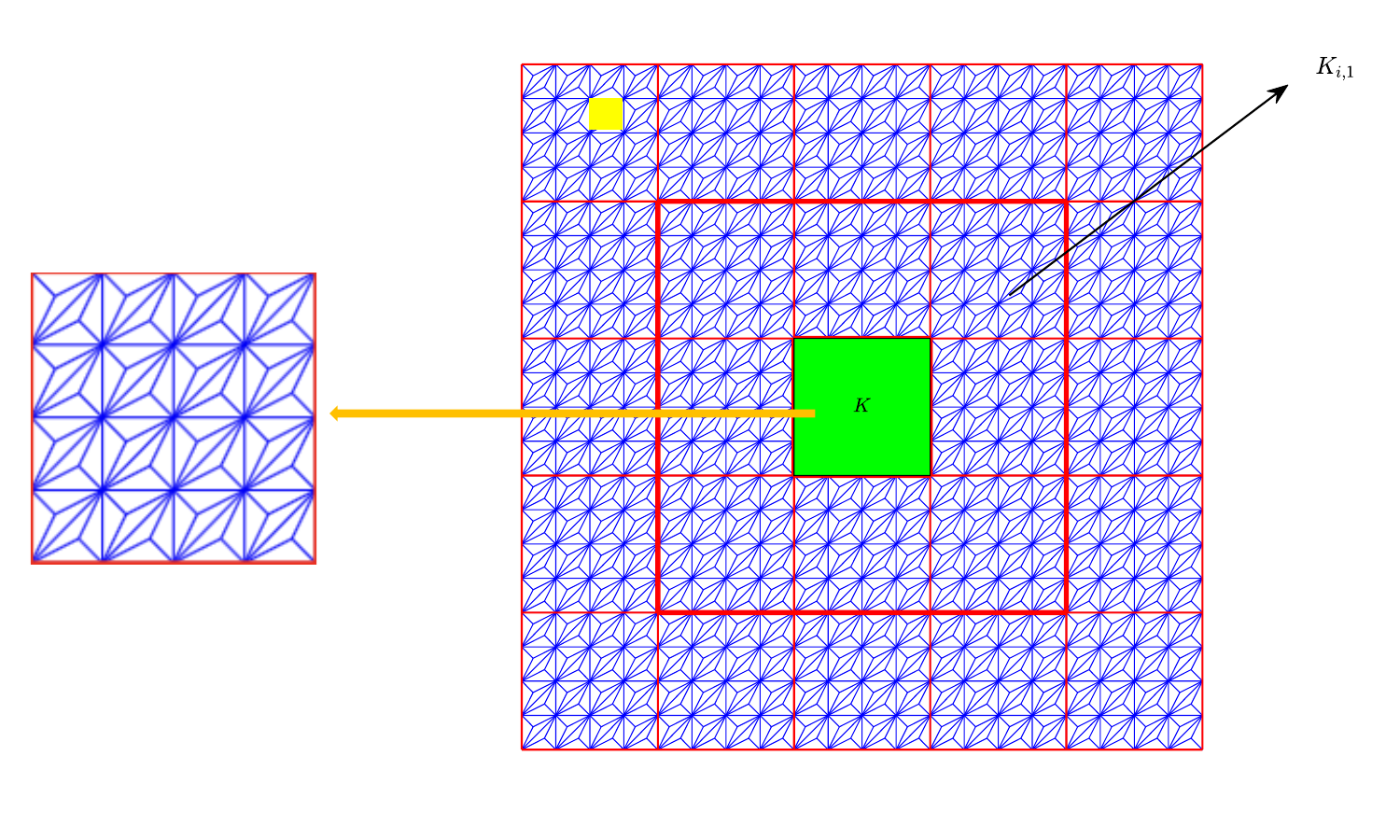} 
		\caption{An illustration of coarse element(green, and left for clearer), fine element(yellow), and an oversampled region $K_{i,1}$ by extending a coarse element by one coarse grid layer(thick red line)}
		\label{coarse_bigger}
\end{figure}
\subsection{Displacement basis function}\label{Displacement basis function}
We will construct a set of auxiliary multiscale basis functions for each coarse element $K_i$ by solving a local spectral problem. First, we define some notation. For a general set $R$, we define $\bm{\mathit{U}}_h(R)$ as the restriction of $\bm{\mathit{U}}_h$ on $R$ and $\underline{\bm{\mathit{\Sigma}}}_h(R)\coloneqq\{\underline{\bm{\tau}}_h\in\underline{\bm{\mathit{\Sigma}}}_h\colon\underline{\bm{\tau}}_h\mathbf{n}=\mathbf{0}\text{ on }\partial R\}$. Note that $\underline{\bm{\mathit{\Sigma}}}_h(R)$ is with homogeneous traction boundary condition on $R$, which confirms the conforming property of the multiscale bases in the construction process.

Next, we define the required spectral problem. For each coarse element $K_i$, we solve the eigenvalue problem: find $(\bm{\underline{\phi}}^i_j,\mathbf{p}^i_j)\in \underline{\bm{\mathit{\Sigma}}}_h(K_i)\times \bm{\mathit{U}}_h(K_i)$ and $\lambda^i_j\in\mathbb{R}$ such that
\begin{subequations}
	\label{local spectral problem}
	\begin{align}		
		(\mathcal{A}\bm{\underline{\phi}}^i_j,\underline{\bm{\tau}}_h)+(\textbf{div}\underline{\bm{\tau}}_h,\mathbf{p}^i_j)&=0  \quad \forall \underline{\bm{\tau}}_h\in \underline{\bm{\mathit{\Sigma}}}_h(K_i), \label{local spectral problem_a}\\
		-(\textbf{div}\bm{\underline{\phi}}^i_j,\mathbf{v}_h)&= \lambda^i_js_i(\mathbf{p}^i_j,\mathbf{v}_h)\quad \forall \mathbf{v}_h\in \bm{\mathit{U}}_h(K_i).\label{local spectral problem_d}
	\end{align}	
\end{subequations}
We arrange the eigenvalues of (\ref{local spectral problem}) in non-decreasing order $0=\lambda_1^i\leq\lambda_2^i\leq\cdot\cdot\cdot\leq\lambda_{L_i}^i$, where $L_i$ is the dimension of the space $\bm{\mathit{U}}_h(K_i)$. For each $i\in\{1,2,\cdot\cdot\cdot,N\}$, choose the first $l_i$ $(1\leq l_i\leq L_i)$ eigenfunctions $\{\mathbf{p}^i_j\}_{j=1}^{l_i}$ corresponding the first $l_i$ smallest eigenvalues. Then, the local auxiliary multiscale space $U_\mathup{aux}(K_i)$ for displacement is $U_\mathup{aux}(K_i)\coloneqq\text{span}\{\mathbf{p}^i_j|1\leq j\leq l_i\}.$
The global auxiliary multiscale finite element space $U_\mathup{aux}$ is defined by $U_\mathup{aux}=\oplus_iU_\mathup{aux}(K_i)$. We define $\Lambda=\min_{1\leq i\leq N}\lambda_{l_i+1}^i$. The first component of the first nine displacement multiscale basis functions are presented in Figure \ref{multiscale displacement basis}.
\begin{remark}
We point out that by spectral theorem (see for instance \cite{PG_2013}), compactness of the corresponding solution operator (in continuous level) and the estimate for discrete eigenproblem(see \cite{Zhong_2023,SDR2013}), the eigenvalues obtained in (\ref{local spectral problem}) are real. Moreover, let the test functions $\underline{\bm{\tau}}_h=\bm{\underline{\phi}}^i_j,\mathbf{v}_h=\tilde{k}^{-1}\textbf{\textup{div}}\bm{\underline{\phi}}^i_j$ in (\ref{local spectral problem}), then by a simple computation, we have $(\tilde{k}^{-1}\textbf{\textup{div}}\bm{\underline{\phi}}^i_j,\textbf{\textup{div}}\bm{\underline{\phi}}^i_j)=\lambda^i_j(\mathcal{A}\bm{\underline{\phi}}^i_j,\bm{\underline{\phi}}^i_j)$, which indicates $\lambda^i_j\geq0$.
\end{remark}
\begin{figure}[tbph] 
		\centering 
		\includegraphics[height=10cm,width=14cm]{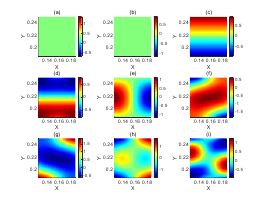} 
		\caption{First components of displacement bases corresponding to the first nine smallest eigenvalues}
		\label{multiscale displacement basis}
	\end{figure}
\subsection{Stress basis function}
\label{Stress basis function}
In this section, we will present the construction of the stress basis functions. We directly present the relaxed version of stress multiscale basis functions (see, for instance, \cite{Eric_mix_2018}). Let $\mathbf{p}^i_j\in U_\mathup{aux}$ be a given displacement basis function supported in $K_i$. $K_i^+$ is the corresponding oversampled region. We will define a stress basis function $\bm{\underline{\psi}}^i_{j,\mathup{ms}}\in\underline{\bm{\mathit{\Sigma}}}_h(K_i^+)$ by solving (\ref{multiscale basis}). Some multiscale stress bases are presented in Figure \ref{multiscale stress basis}. The multiscale space is defined as $\Sigma_\mathup{ms}\coloneqq\text{span}\{\bm{\underline{\psi}}^i_{j,\mathup{ms}}\}$. Note that the basis function is supported in $K_i^+$, which is a union of connected coarse elements and contains $K_i$. We define $J_i$ as the set of indices such that if $k\in J_i$, then $K_k\in K_i^+$. We also define $U_\mathup{aux}(K_i^+)=\text{span}\{\mathbf{p}^k_j|1\leq j\leq l_k,k\in J_i\}$.

\begin{figure}[tbph] 
		\centering 
		\includegraphics[height=10cm,width=14cm]{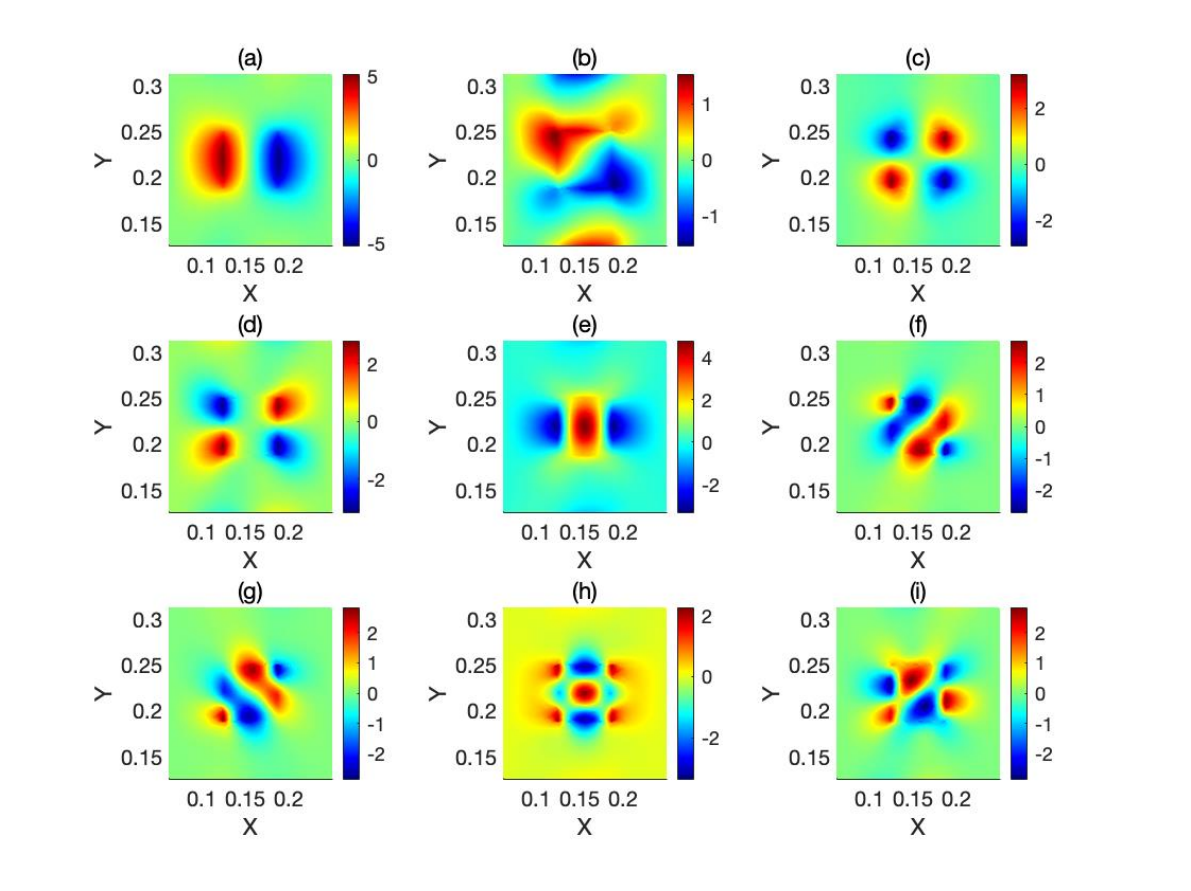} 
		\caption{First components of multiscale stress bases corresponding to displacement bases in Figure \ref{multiscale displacement basis}}
		\label{multiscale stress basis}
\end{figure}

We find $\bm{\underline{\psi}}^i_{j,\mathup{ms}}\in\underline{\bm{\mathit{\Sigma}}}_h(K_i^+)$ and $\mathbf{q}^i_{j,\mathup{ms}}\in \bm{\mathit{U}}_h(K_i^+)$ such that
\begin{subequations}
	\label{multiscale basis}
	\begin{align}		
		(\mathcal{A}\bm{\underline{\psi}}^i_{j,\mathup{ms}},\underline{\bm{\tau}}_h)+(\textbf{div}\underline{\bm{\tau}}_h,\mathbf{q}^i_{j,\mathup{ms}})&=0  \quad \forall \underline{\bm{\tau}}_h\in \underline{\bm{\mathit{\Sigma}}}_h(K_i^+), \label{multiscale basis_a}\\
s(\pi\mathbf{q}^i_{j,\mathup{ms}},\pi\mathbf{v}_h)-(\textbf{div}\bm{\underline{\psi}}^i_{j,\mathup{ms}},\mathbf{v}_h)&= s(\mathbf{p}^i_j,\mathbf{v}_h)\quad \forall \mathbf{v}_h\in \bm{\mathit{U}}_h(K_i^+).\label{multiscale basis_d}
	\end{align}	
\end{subequations}
To enhance computational efficiency for (\ref{multiscale basis}), we will combine the techniques in \cite{HWW1989,RKS1992}. The global basis function $\bm{\underline{\psi}}^i_j\in\underline{\bm{\mathit{\Sigma}}}_h$ is constructed by solving the following problem: find $\bm{\underline{\psi}}^i_j\in\underline{\bm{\mathit{\Sigma}}}_h$ and $\mathbf{q}^i_j\in\bm{\mathit{U}}_h$ such that 
\begin{subequations}
	\label{global multiscale basis}
	\begin{align}		
		(\mathcal{A}\bm{\underline{\psi}}^i_j,\underline{\bm{\tau}}_h)+(\textbf{div}\underline{\bm{\tau}}_h,\mathbf{q}^i_j)&=0  \quad \forall \underline{\bm{\tau}}_h\in \underline{\bm{\mathit{\Sigma}}}_h, \label{global multiscale basis_a}\\
s(\pi\mathbf{q}^i_j,\pi\mathbf{v}_h)-(\textbf{div}\bm{\underline{\psi}}^i_j,\mathbf{v}_h)&= s(\mathbf{p}^i_j,\mathbf{v}_h)\quad \forall \mathbf{v}_h\in \bm{\mathit{U}}_h\label{global multiscale basis_d}
	\end{align}	
\end{subequations}
The global
multiscale space is defined as $\Sigma_\mathup{glo}=\text{span}\{\bm{\underline{\psi}}^i_j\}$. Similar to the observation in \cite{Eric_mix_2018}, we find (\ref{global multiscale basis}) defines a mapping $G$ from $U_\mathup{aux}$ to $\Sigma_\mathup{glo}\times \bm{\mathit{U}}_h$. In particular, given $\mathbf{p}_\mathup{aux}\in U_\mathup{aux}$, the image $G(\mathbf{p}_\mathup{aux})=(G_1(\mathbf{p}_\mathup{aux}),G_2(\mathbf{p}_\mathup{aux}))=(\underline{\bm{\psi}},\mathbf{r})\in\Sigma_\mathup{glo}\times \bm{\mathit{U}}_h$ is defined by
\begin{subequations}
	\label{global multiscale basis_another version}
	\begin{align}		
		(\mathcal{A}\bm{\underline{\psi}},\underline{\bm{\tau}}_h)+(\textbf{div}\underline{\bm{\tau}}_h,\mathbf{r})&=0  \quad \forall \underline{\bm{\tau}}_h\in \underline{\bm{\mathit{\Sigma}}}_h, \label{global multiscale basis_another version_a}\\
s(\pi\mathbf{r},\pi\mathbf{v}_h)-(\textbf{div}\bm{\underline{\psi}},\mathbf{v}_h)&= s(\mathbf{p}_\mathup{aux},\mathbf{v}_h)\quad \forall \mathbf{v}_h\in \bm{\mathit{U}}_h.\label{global multiscale basis_another version_d}
	\end{align}	
\end{subequations}
Next, we give a characterization of the space $\Sigma_\mathup{glo}$ in the following lemma.
\begin{lemma}
\label{characterization of global basis functions}
Let $\Sigma_\mathup{glo}$ be the global multiscale space for the stress. For each $\mathbf{p}_\mathup{aux}\in U_\mathup{aux}$ with $s(\mathbf{p}_\mathup{aux},\mathbf{x})=0$ (where $\mathbf{x}\in\text{span}\{(1,0)^t,(0,1)^t\}$ in two dimension while in three dimension, $\mathbf{x}\in\text{span}\{(1,0,0)^t,(0,1,0)^t,(0,0,1)^t\}$), there is a unique $\bm{\underline{\sigma}}^*\in\Sigma_\mathup{glo}$ such that $(\bm{\underline{\sigma}}^*,\mathbf{u}^*)\in\bm{\underline{\Sigma}}_h\times\bm{\mathit{U}}_h$ with $\int_\Omega\mathbf{u}^*=\mathbf{0}$ is the solution of 
\begin{subequations}
	\label{global multiscale basis_characterization}
	\begin{align}		
		(\mathcal{A}\bm{\underline{\sigma}}^*,\underline{\bm{\tau}}_h)+(\textbf{\textup{div}}\underline{\bm{\tau}}_h,\mathbf{u}^*)&=0  \quad \forall \underline{\bm{\tau}}_h\in \underline{\bm{\mathit{\Sigma}}}_h, \label{global multiscale basis_characterization_a}\\
-(\textbf{\textup{div}}\bm{\underline{\sigma}}^*,\mathbf{v}_h)&= s(\mathbf{p}_\mathup{aux},\mathbf{v}_h)\quad \forall \mathbf{v}_h\in \bm{\mathit{U}}_h.\label{global multiscale basis_characterization_d}
	\end{align}	
\end{subequations}
\end{lemma}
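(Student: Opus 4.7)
The plan is to compare $\bm{\underline{\sigma}}^{*}$ with elements of $\Sigma_{\mathup{glo}}$ via their divergences: the key observation is that (\ref{global multiscale basis_characterization_d}) forces $\textbf{div}\,\bm{\underline{\sigma}}^{*}$ to live in the same finite-dimensional subspace of $\bm{\mathit{U}}_{h}$ as $\textbf{div}$ applied to any element of $\Sigma_{\mathup{glo}}$. First I would verify the well-posedness of (\ref{global multiscale basis_characterization}): the compatibility $s(\mathbf{p}_{\mathup{aux}},\mathbf{x})=0$ for constant $\mathbf{x}$ makes the right-hand side of (\ref{global multiscale basis_characterization_d}) vanish on rigid translations, so the discrete inf-sup (\ref{discrete inf-sup}) and the coercivity of $\mathcal{A}$ on the kernel of $\textbf{div}$ yield a unique pair $(\bm{\underline{\sigma}}^{*},\mathbf{u}^{*})\in\bm{\underline{\Sigma}}_{h}\times\bm{\mathit{U}}_{h}$ with $\int_{\Omega}\mathbf{u}^{*}=\mathbf{0}$; the nontrivial content of the lemma is therefore the inclusion $\bm{\underline{\sigma}}^{*}\in\Sigma_{\mathup{glo}}$.

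Let $U_{\mathup{aux}}^{\perp_{s}}$ denote the $s$-orthogonal complement of $U_{\mathup{aux}}$ inside $\bm{\mathit{U}}_{h}$ and set $W_{0}:=\{\mathbf{w}\in\bm{\mathit{U}}_{h}\colon \int_{\Omega}\mathbf{w}=\mathbf{0}\text{ and }(\mathbf{w},\mathbf{v})=0\ \forall\mathbf{v}\in U_{\mathup{aux}}^{\perp_{s}}\}$. Since the $n$-dimensional space of global constants lies in $U_{\mathup{aux}}$ and $s$ is positive definite there, the constants meet $U_{\mathup{aux}}^{\perp_{s}}$ only at zero, and a rank-nullity count on the non-degenerate $L^{2}$ pairing gives $\dim W_{0}=\dim U_{\mathup{aux}}-n$. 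Testing (\ref{global multiscale basis_characterization_d}) with any $\mathbf{v}_{h}\in U_{\mathup{aux}}^{\perp_{s}}$ produces $(\textbf{div}\,\bm{\underline{\sigma}}^{*},\mathbf{v}_{h})=s(\mathbf{p}_{\mathup{aux}},\mathbf{v}_{h})=0$, while the Neumann boundary condition built into $\bm{\underline{\Sigma}}_{h}$ forces $\int_{\Omega}\textbf{div}\,\bm{\underline{\sigma}}^{*}=\mathbf{0}$, so $\textbf{div}\,\bm{\underline{\sigma}}^{*}\in W_{0}$. The same substitution in (\ref{global multiscale basis_another version_d}) (the term $s(\pi\mathbf{r},\pi\mathbf{v}_{h})$ drops because $\pi\mathbf{v}_{h}=0$) together with the identical boundary argument yields $\textbf{div}\,\bm{\underline{\psi}}\in W_{0}$ for every $\bm{\underline{\psi}}\in\Sigma_{\mathup{glo}}$.

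I would then prove that $\textbf{div}\colon\Sigma_{\mathup{glo}}\to W_{0}$ is a linear bijection. Injectivity is an energy identity: if $\bm{\underline{\psi}}\in\Sigma_{\mathup{glo}}$ has $\textbf{div}\,\bm{\underline{\psi}}=0$, then pairing (\ref{global multiscale basis_another version_a}) with $\underline{\bm{\tau}}_{h}=\bm{\underline{\psi}}$ gives $(\mathcal{A}\bm{\underline{\psi}},\bm{\underline{\psi}})=0$, hence $\bm{\underline{\psi}}=0$. For the equality of dimensions I would argue that the assignment $\mathbf{p}_{\mathup{aux}}\mapsto\bm{\underline{\psi}}$ defined by (\ref{global multiscale basis_another version}) has kernel exactly the $n$-dimensional space of global constants: if $\bm{\underline{\psi}}=0$ then (\ref{global multiscale basis_another version_a}) together with the inf-sup forces $\mathbf{r}$ to be a constant vector $\mathbf{c}$, and substituting back into (\ref{global multiscale basis_another version_d}) reduces the equation to $s(\mathbf{c}-\mathbf{p}_{\mathup{aux}},\mathbf{v}_{h})=0$ for all $\mathbf{v}_{h}$, forcing $\mathbf{p}_{\mathup{aux}}=\mathbf{c}$. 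Consequently $\dim\Sigma_{\mathup{glo}}=\dim U_{\mathup{aux}}-n=\dim W_{0}$, and the injective $\textbf{div}$ is automatically surjective.

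To conclude, pick the unique $\bm{\underline{\psi}}\in\Sigma_{\mathup{glo}}$ with $\textbf{div}\,\bm{\underline{\psi}}=\textbf{div}\,\bm{\underline{\sigma}}^{*}$. Subtracting (\ref{global multiscale basis_characterization_a}) from (\ref{global multiscale basis_another version_a}) and then testing with $\underline{\bm{\tau}}_{h}=\bm{\underline{\sigma}}^{*}-\bm{\underline{\psi}}$ eliminates the divergence contribution, leaving $(\mathcal{A}(\bm{\underline{\sigma}}^{*}-\bm{\underline{\psi}}),\bm{\underline{\sigma}}^{*}-\bm{\underline{\psi}})=0$, so $\bm{\underline{\sigma}}^{*}=\bm{\underline{\psi}}\in\Sigma_{\mathup{glo}}$; uniqueness of $\bm{\underline{\sigma}}^{*}$ within $\Sigma_{\mathup{glo}}$ is inherited from the uniqueness in $\bm{\underline{\Sigma}}_{h}$. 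The main obstacle I expect is the dimension bookkeeping that underpins the bijection: one must track carefully how the $n$-dimensional space of constants is simultaneously quotiented on the $\Sigma_{\mathup{glo}}$ side (via the kernel of $\mathbf{p}_{\mathup{aux}}\mapsto\bm{\underline{\psi}}$) and on the $W_{0}$ side (via the zero-mean constraint), while resisting the false identification of $W_{0}$ with a subspace of $U_{\mathup{aux}}$ itself, since the $L^{2}$ and $s$ pairings differ by the weight $\tilde{k}$.
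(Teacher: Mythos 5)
Your argument is sound and in effect supplies the proof that the paper omits (the text only points to \cite[Lemma~1]{Eric_mix_2018}). The mechanism is the one that reference intends --- injectivity of $\mathbf{p}_\mathup{aux}\mapsto\underline{\bm{\psi}}$ modulo constants, a dimension count, and the energy identity $(\mathcal{A}(\underline{\bm{\sigma}}^*-\underline{\bm{\psi}}),\underline{\bm{\sigma}}^*-\underline{\bm{\psi}})=0$ once the divergences coincide --- but organizing it around the bijection $\textbf{div}\colon\Sigma_\mathup{glo}\to W_0$ makes the bookkeeping transparent and is a legitimate, self-contained route. Two hypotheses you rely on are left implicit in the paper and should be stated. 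First, the constant vector fields must belong to $U_\mathup{aux}$, i.e.\ each $l_i$ must be large enough that the zero-eigenvalue (rigid-body) eigenfunctions of (\ref{local spectral problem}) are retained; without this, $\pi\mathbf{c}=\mathbf{c}$ fails and both your computation of $\ker G_1$ and the count $\dim W_0=\dim U_\mathup{aux}-n$ break down. Second, the traction condition $\underline{\bm{\tau}}_h\mathbf{n}=\mathbf{0}$ on $\partial\Omega$ must be built into the discrete stress space used in (\ref{global multiscale basis}) and (\ref{global multiscale basis_characterization}). This second point is not cosmetic: your claim that $\int_\Omega\textbf{div}\,\underline{\bm{\psi}}=\mathbf{0}$ for $\underline{\bm{\psi}}\in\Sigma_\mathup{glo}$ does \emph{not} follow from testing (\ref{global multiscale basis_another version_d}) with a constant $\mathbf{c}$, which only yields $(\textbf{div}\,\underline{\bm{\psi}},\mathbf{c})=s(\pi\mathbf{r}-\mathbf{p}_\mathup{aux},\mathbf{c})$; it genuinely needs the divergence theorem together with the boundary condition. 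The displayed definition of $\underline{\bm{\mathit{\Sigma}}}_h$ carries no boundary condition, but the normalization $\int_\Omega\mathbf{u}^*=\mathbf{0}$ and the compatibility hypothesis $s(\mathbf{p}_\mathup{aux},\mathbf{x})=0$ in the lemma only make sense under your reading, so it is the intended one. Finally, you only prove the inclusion $\ker G_1\subseteq\{\text{constants}\}$; the reverse inclusion (that $(\underline{\bm{\psi}},\mathbf{r})=(\underline{\bm{0}},\mathbf{c})$ solves (\ref{global multiscale basis_another version}) with datum $\mathbf{p}_\mathup{aux}=\mathbf{c}$) is also needed for the dimension count, though it follows immediately from $\pi\mathbf{c}=\mathbf{c}$ and the $s$-self-adjointness of $\pi$.
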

\begin{proof}
The proof is similar to \cite[Lemma 1]{Eric_mix_2018} so we omit the details.
\end{proof}

\section{Analysis}
\label{Analysis for stability and convergence}
In this section, we provide comprehensive analyses for the proposed multiscale method. We first prove the global stability and convergence in Section \ref{Stability and convergence of using global basis functions}. Then in Section \ref{Well-posedness of global and multiscale basis functions}, we consider the well-posedness of finding the multiscale basis functions defined in (\ref{multiscale basis}). Next in Section \ref{Decay property of global basis functions}, we prove a decay property of the global basis functions, which is crucial for the stability and convergence of the multiscale method (\ref{multiscale solution}) in Section \ref{final stability and convergence}.

Let $(\underline{\bm{\sigma}}_h,\mathbf{u}_h) \in \underline{\bm{\Sigma}}_h \times \bm{U}_h$ denote the reference solution of (\ref{discrete formula}). Our analysis focuses on estimating the errors $\underline{\bm{\sigma}}_h - \underline{\bm{\sigma}}_\mathrm{ms}$ and $\mathbf{u}_h - \mathbf{u}_\mathrm{ms}$, beginning with bounds for $\mathbf{u} - \mathbf{u}_h$ and $\underline{\bm{\sigma}} - \underline{\bm{\sigma}}_h$ in the following lemma.
\begin{lemma}
\label{convergence form the fine solution}
Let $(\underline{\bm{\sigma}}_h,\mathbf{u}_h)$ be the fine solution defined in problem (\ref{discrete formula}) and let $(\underline{\bm{\sigma}},\mathbf{u})$ be the exact solution defined in (\ref{weak form for our pde}). Then, we have
\begin{equation}
\label{fine error}
\norm{\underline{\bm{\sigma}}-\underline{\bm{\sigma}}_h}_\mathcal{A}^2+\norm{\mathbf{u}-\mathbf{u}_h}_{L^2}^2\leq C_Sh^{2s}\norm{\mathbf{f}}_{L^2}^2+C\frac{1}{\Lambda}\norm{(I-\pi)(\tilde{k}^{-1}\mathbf{f})}_s^2,
\end{equation}
where $C,C_S>0$ are independent of mesh sizes $h,H$ and the Lam$\rm \acute{e}$ coefficient $\lambda$, but $C_S$ depends on the regularity of the PDE problem (\ref{our pde}) (see Assumption \ref{0128b}). $\Lambda=\min_{1\leq i\leq N}\lambda_{l_i+1}^i$ maintains independent of $h,H$, and the Lam$\acute{\rm e}$ coefficient $\lambda$.
\end{lemma}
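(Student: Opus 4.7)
The plan is to introduce an auxiliary fine-scale solution $(\underline{\bm{\sigma}}_h^\star,\mathbf{u}_h^\star)\in\underline{\bm{\mathit{\Sigma}}}_h\times\bm{\mathit{U}}_h$ to the \emph{standard} mixed discretization driven by the exact source $(\mathbf{f},\mathbf{v}_h)$ (rather than the projected source $s(\pi(\tilde k^{-1}\mathbf{f}),\mathbf{v}_h)$ used in \cref{discrete formula}) and split the error via the triangle inequality
\[
\|\underline{\bm{\sigma}}-\underline{\bm{\sigma}}_h\|_\mathcal{A}\;\le\;\|\underline{\bm{\sigma}}-\underline{\bm{\sigma}}_h^\star\|_\mathcal{A}+\|\underline{\bm{\sigma}}_h^\star-\underline{\bm{\sigma}}_h\|_\mathcal{A},
\]
and analogously for $\mathbf{u}-\mathbf{u}_h$. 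The first summand, treated by classical mixed-FEM arguments, will produce the $C_S h^{2s}\|\mathbf{f}\|_{L^2}^2$ contribution after squaring; the second, driven by the spectral gap of the local auxiliary problems, will produce the $\Lambda^{-1}$ contribution.

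For the classical piece I would invoke the commuting-diagram property \cref{commuting diagram} to deduce $\textbf{div}(\mathbf{\Pi}_h\underline{\bm{\sigma}}-\underline{\bm{\sigma}}_h^\star)=\mathbf{0}$, so that C\'ea-type manipulations reduce the stress error to $\|\underline{\bm{\sigma}}-\mathbf{\Pi}_h\underline{\bm{\sigma}}\|_\mathcal{A}$, which is controlled by \cref{ii} or \cref{jj} at the regularity index $s$. The $L^2$-bound on $\mathbf{u}-\mathbf{u}_h^\star$ is extracted from the discrete inf-sup condition \cref{discrete inf-sup} applied to $\bm{\mathit{P}}\mathbf{u}-\mathbf{u}_h^\star$ combined with the projection estimate \cref{ll}; the regularity from Assumption \ref{0128b} then converts everything into $\|\mathbf{f}\|_{L^2}$, yielding $\|\underline{\bm{\sigma}}-\underline{\bm{\sigma}}_h^\star\|_\mathcal{A}+\|\mathbf{u}-\mathbf{u}_h^\star\|_{L^2}\le C_S h^s\|\mathbf{f}\|_{L^2}$.

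For the perturbation piece, set $(e_\sigma,e_u)\coloneqq(\underline{\bm{\sigma}}_h^\star-\underline{\bm{\sigma}}_h,\mathbf{u}_h^\star-\mathbf{u}_h)$. Subtracting the two discrete systems and exploiting the identity $(\mathbf{f},\mathbf{v}_h)=s(\tilde k^{-1}\mathbf{f},\mathbf{v}_h)$, one obtains
\[
(\mathcal{A}e_\sigma,\underline{\bm{\tau}}_h)+(\textbf{div}\underline{\bm{\tau}}_h,e_u)=0,\qquad (\textbf{div}e_\sigma,\mathbf{v}_h)=s((I-\pi)(\tilde k^{-1}\mathbf{f}),\mathbf{v}_h).
\]
Testing with $\underline{\bm{\tau}}_h=e_\sigma$ and $\mathbf{v}_h=e_u$, together with the $s$-orthogonality of $\pi$, yields $\|e_\sigma\|_\mathcal{A}^2\le\|(I-\pi)(\tilde k^{-1}\mathbf{f})\|_s\,\|(I-\pi)e_u\|_s$, so the whole argument rests on the spectral-gap estimate $\|(I-\pi)e_u\|_s\le\Lambda^{-1/2}\|e_\sigma\|_\mathcal{A}$. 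I plan to establish this inequality by constructing, on each coarse element $K_i$, the test tensor $\underline{\bm{\tau}}|_{K_i}\coloneqq\sum_{j>l_i}(c_j^i/\lambda_j^i)\bm{\underline{\phi}}^i_j$, where $(I-\pi_i)e_u|_{K_i}=\sum_{j>l_i}c_j^i\mathbf{p}^i_j$ is the spectral expansion. The vanishing boundary traction $\bm{\underline{\phi}}^i_j\mathbf{n}|_{\partial K_i}=\mathbf{0}$ makes the piecewise assembly $H(\textbf{div})$-conforming, and a direct computation from \cref{local spectral problem} together with the $s_i$-orthonormality of the eigenfunctions delivers $(\mathcal{A}\underline{\bm{\tau}},\underline{\bm{\tau}})\le\Lambda^{-1}\|(I-\pi)e_u\|_s^2$ as well as $-(\textbf{div}\underline{\bm{\tau}},\mathbf{v}_h)=s((I-\pi)e_u,\mathbf{v}_h)$ for every $\mathbf{v}_h\in\bm{\mathit{U}}_h$. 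Substituting $\underline{\bm{\tau}}$ back into the first error equation and using the vanishing of $(\textbf{div}\underline{\bm{\tau}},\cdot)$ on $U_\mathup{aux}$ produces $(\mathcal{A}e_\sigma,\underline{\bm{\tau}})=\|(I-\pi)e_u\|_s^2$, and Cauchy--Schwarz closes the loop. The $L^2$-bound on $e_u$ then follows from the discrete inf-sup condition \cref{discrete inf-sup} applied to the first error equation (using that $e_u$ inherits the zero-mean constraint), giving $\|e_u\|_{L^2}\le C\|e_\sigma\|_\mathcal{A}$; squaring and summing both partial bounds yields \cref{fine error}.

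I expect the principal technical obstacle to be the spectral-gap inequality above: one must exhibit a single globally $H(\textbf{div})$-conforming symmetric tensor whose discrete divergence reproduces $\tilde k(I-\pi)e_u$ \emph{and} whose $\mathcal{A}$-energy is controlled by $\Lambda^{-1}$ uniformly in the contrast. The local eigenfunctions $\{\bm{\underline{\phi}}^i_j\}$ are engineered precisely for this purpose through their vanishing normal traces, but orchestrating the divergence identity, the energy bound, and the interface conformity simultaneously is the delicate point on which the contrast-independence of the final estimate hinges.
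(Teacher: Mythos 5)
Your proposal is correct, and every essential ingredient matches the paper's: the commuting-diagram property \cref{commuting diagram}, the interpolation estimates \cref{ii}--\cref{ll} with Assumption \ref{0128b}, the discrete inf-sup condition \cref{discrete inf-sup} for the displacement, and---crucially---the spectral-gap estimate obtained by testing the first error equation with the piecewise tensor $\sum_{j>l_i}(c^i_j/\lambda^i_j)\underline{\bm{\phi}}^i_j$ built from the eigenpairs of \cref{local spectral problem}, whose vanishing normal traces on $\partial K_i$ guarantee conformity and whose $\mathcal{A}$-orthogonality delivers the $\Lambda^{-1}$ energy bound. The only genuine difference is organizational: the paper works with a single error equation for $\mathbf{\Pi}_h\underline{\bm{\sigma}}-\underline{\bm{\sigma}}_h$ and $\bm{\mathit{P}}\mathbf{u}-\mathbf{u}_h$ in which the interpolation residual $(\mathcal{A}(\mathbf{\Pi}_h\underline{\bm{\sigma}}-\underline{\bm{\sigma}}),\cdot)$ and the source-projection perturbation $s((I-\pi)(\tilde k^{-1}\mathbf{f}),\cdot)$ appear simultaneously on the right-hand side and are absorbed together, whereas you insert an auxiliary discrete solution $(\underline{\bm{\sigma}}_h^\star,\mathbf{u}_h^\star)$ driven by the exact source and split by the triangle inequality, so that the classical $O(h^s)$ Galerkin error and the $\Lambda^{-1/2}\norm{(I-\pi)(\tilde k^{-1}\mathbf{f})}_s$ perturbation are estimated in two fully decoupled steps. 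Your decomposition avoids the mixed cross-terms of the paper's one-shot argument (and makes the clean identity $\norm{e_\sigma}_\mathcal{A}^2=-s((I-\pi)(\tilde k^{-1}\mathbf{f}),(I-\pi)e_u)$ available), at the modest cost of having to invoke well-posedness of one extra discrete problem; the constants and the final bound \cref{fine error} come out the same either way. The point you flag as the principal obstacle---the globally conforming eigenfunction test tensor---is exactly the construction the paper uses, so there is no gap.
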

\begin{proof}
Combining (\ref{weak form for our pde}), (\ref{discrete formula}) and (\ref{commuting diagram}), we have
\begin{subequations}
	\label{weak form-discrete formula_2}
	\begin{align}		
		(\mathcal{A}(\mathbf{\Pi}_h\underline{\bm{\sigma}}-\underline{\bm{\sigma}}_h),\underline{\bm{\tau}}_h)+(\textbf{div}\underline{\bm{\tau}}_h,\bm{\mathit{P}}\mathbf{u}-\mathbf{u}_h)&=(\mathcal{A}(\mathbf{\Pi}_h\underline{\bm{\sigma}}-\underline{\bm{\sigma}}),\underline{\bm{\tau}}_h)  \quad \forall \underline{\bm{\tau}}_h\in \underline{\bm{\mathit{\Sigma}}}_h, \label{weak form-discrete formula_2_a}\\
		(\textbf{div}(\mathbf{\Pi}_h\underline{\bm{\sigma}}-\underline{\bm{\sigma}}_h),\mathbf{v}_h)&= s((I-\pi)(\tilde{k}^{-1}\mathbf{f}),\mathbf{v}_h),\quad \forall \mathbf{v}_h\in \bm{\mathit{U}}_h.\label{weak form-discrete formula_2_d}
	\end{align}	
\end{subequations}
Letting $\underline{\bm{\tau}}_h=\mathbf{\Pi}_h\underline{\bm{\sigma}}-\underline{\bm{\sigma}}_h$, $\mathbf{v}_h=\bm{\mathit{P}}\mathbf{u}-\mathbf{u}_h$ and combining Assumption \ref{0128b}, (\ref{ii}), (\ref{jj}), we easily obtain that
\[
(\mathcal{A}(\mathbf{\Pi}_h\underline{\bm{\sigma}}-\underline{\bm{\sigma}}_h),\mathbf{\Pi}_h\underline{\bm{\sigma}}-\underline{\bm{\sigma}}_h)\leq C_Sh^s\norm{\mathbf{f}}_{L^2}\norm{\mathbf{\Pi}_h\underline{\bm{\sigma}}-\underline{\bm{\sigma}}_h}_\mathcal{A}+\norm{(I-\pi)(\tilde{k}^{-1}\mathbf{f})}_s\cdot\norm{(I-\pi)(\bm{\mathit{P}}\mathbf{u}-\mathbf{u}_h)}_s,
\]
where $C_S>0$ is independent of $h$ and $\lambda$, but depends on the regularity  Assumption \ref{0128b}.

For each $K_i$, we define $\mathbf{u}_i$ to be the restriction of $(I-\pi)(\bm{\mathit{P}}\mathbf{u}-\mathbf{u}_h)$ on $K_i$. Then, by the definition of $\pi$, we can write $\mathbf{u}_i=\sum_{j>l_i}c^i_j\mathbf{p}^i_j$. Denote $\underline{\mathbf{x}}_i\in\underline{\bm{\Sigma}}_h(K_i)$ as $\underline{\mathbf{x}}_i=\sum_{j>l_i}(\lambda^i_j)^{-1}c^i_j\underline{\bm{\phi}}^i_j$.
Let $\underline{\bm{\tau}}_h=\underline{\mathbf{x}}_i$ in (\ref{weak form-discrete formula_2}), we obtain
$
(\mathcal{A}(\mathbf{\Pi}_h\underline{\bm{\sigma}}-\underline{\bm{\sigma}}_h),\underline{\mathbf{x}}_i)+(\textbf{div}\underline{\mathbf{x}}_i,\bm{\mathit{P}}\mathbf{u}-\mathbf{u}_h)=(\mathcal{A}(\mathbf{\Pi}_h\underline{\bm{\sigma}}-\underline{\bm{\sigma}}),\underline{\mathbf{x}}_i).
$
Using the spectral problem (\ref{local spectral problem}), we have
$
-(\textbf{div}\underline{\mathbf{x}}_i,\bm{\mathit{P}}\mathbf{u}-\mathbf{u}_h)=-(\textbf{div}\underline{\mathbf{x}}_i,(I-\pi)(\bm{\mathit{P}}\mathbf{u}-\mathbf{u}_h))=\sum_{j>l_i}(c^i_j)^2=\norm{(I-\pi)(\bm{\mathit{P}}\mathbf{u}-\mathbf{u}_h)}_s^2.
$
Then by a simple computation, 
$(\mathcal{A}(\mathbf{\Pi}_h\underline{\bm{\sigma}}-\underline{\bm{\sigma}}_h),\underline{\mathbf{x}}_i)-(\mathcal{A}(\mathbf{\Pi}_h\underline{\bm{\sigma}}-\underline{\bm{\sigma}}),\underline{\mathbf{x}}_i)\leq \frac{1}{\sqrt{\Lambda}}(C\norm{\mathbf{\Pi}_h\underline{\bm{\sigma}}-\underline{\bm{\sigma}}_h}_\mathcal{A}+C_Sh^s\norm{\mathbf{f}}_{L^2})\cdot\norm{(I-\pi)(\bm{\mathit{P}}\mathbf{u}-\mathbf{u}_h)}_s$.
Therefore, combining Assumption \ref{0128b}, we have
\[
\norm{\mathbf{\Pi}_h\underline{\bm{\sigma}}-\underline{\bm{\sigma}}_h}_\mathcal{A}^2\leq C\frac{1}{\Lambda}\norm{(I-\pi)(\tilde{k}^{-1}\mathbf{f})}_s^2+C_Sh^{2s}\norm{\mathbf{f}}_{L^2}^2.
\]
By utilizing the inf-sup condition (\ref{discrete inf-sup}), we have
$
\norm{\bm{\mathit{P}}\mathbf{u}-\mathbf{u}_h}_{L^2}\leq C\sup_{\underline{\bm{\tau}}\in\underline{\bm{\Sigma}}_h}\frac{(\textbf{div}\underline{\bm{\tau}},\bm{\mathit{P}}\mathbf{u}-\mathbf{u}_h)}{\norm{\underline{\bm{\tau}}}_{H(\textbf{div})}}\leq C\norm{\mathbf{\Pi}_h\underline{\bm{\sigma}}-\underline{\bm{\sigma}}_h}_\mathcal{A}+C_Sh^s\norm{\mathbf{f}}_{L^2}.
$
Then combining Assumption \ref{0128b} and (\ref{ii})-(\ref{ll}), we obtain the desired results.
\end{proof}
\subsection{Stability and convergence of using global basis functions}\label{Stability and convergence of using global basis functions}
We approximate the problem (\ref{our pde}) using the space $U_\mathup{aux}$ for displacement and $\Sigma_\mathup{glo}$ for stress. So, we define the solution $(\underline{\bm{\sigma}}_\mathup{glo},\mathbf{u}_\mathup{glo})\in \Sigma_\mathup{glo}\times U_\mathup{aux}$ using global basis function by the following:
\begin{subequations}
	\label{global variational form}
	\begin{align}		
		(\mathcal{A}\underline{\bm{\sigma}}_\mathup{glo},\underline{\bm{\tau}})+(\textbf{div}\underline{\bm{\tau}},\mathbf{u}_\mathup{glo})&=0  \quad \forall \underline{\bm{\tau}}\in \Sigma_\mathup{glo}, \label{global variational form_a}\\
		(\textbf{div}\underline{\bm{\sigma}}_\mathup{glo},\mathbf{v})&= (\mathbf{f},\mathbf{v})\quad \forall \mathbf{v}\in U_\mathup{aux}.\label{global variational form_d}
	\end{align}	
\end{subequations}
We will prove the stability and convergence of (\ref{global variational form}). First, we prove the following inf-sup condition.
\begin{lemma}
\label{inf-sup for global variational form}
For every $\mathbf{p}_\mathup{aux}\in U_\mathup{aux}$ with $s(\mathbf{p}_\mathup{aux},\mathbf{x})=0$ (where $\mathbf{x}\in\text{span}\{(1,0)^t,(0,1)^t\}$ in two dimension while in three dimension, $\mathbf{x}\in\text{span}\{(1,0,0)^t,(0,1,0)^t,(0,0,1)^t\}$), there is $\underline{\bm{\beta}}\in \Sigma_\mathup{glo}$ such that
\[
\norm{\mathbf{p}_\mathup{aux}}_s\leq C_g\frac{(\textbf{\textup{div}}\underline{\bm{\beta}},\mathbf{p}_\mathup{aux})}{\norm{\underline{\bm{\beta}}}_{\mathcal{A},\textbf{\textup{div}}}},
\]
where $\norm{\underline{\bm{\beta}}}_{\mathcal{A},\textbf{\textup{div}}}^2=(\mathcal{A}\underline{\bm{\beta}},\underline{\bm{\beta}})+(\textbf{\textup{div}}\underline{\bm{\beta}},\textbf{\textup{div}}\underline{\bm{\beta}})$ by the definition of $\norm{\cdot}_{\mathcal{A},\textbf{\textup{div}}}$.
\end{lemma}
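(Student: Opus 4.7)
The plan is to construct an explicit candidate $\underline{\bm{\beta}}$ from the characterization of $\Sigma_{\mathup{glo}}$ supplied by \cref{characterization of global basis functions}. Given $\mathbf{p}_\mathup{aux}\in U_\mathup{aux}$ satisfying the orthogonality $s(\mathbf{p}_\mathup{aux},\mathbf{x})=0$ against rigid-body translations, I would apply that lemma to obtain the unique pair $(\underline{\bm{\sigma}}^*,\mathbf{u}^*)\in\underline{\bm{\Sigma}}_h\times\bm{\mathit{U}}_h$ with $\int_\Omega\mathbf{u}^*=\mathbf{0}$ solving \cref{global multiscale basis_characterization}, and set $\underline{\bm{\beta}}:=-\underline{\bm{\sigma}}^*\in\Sigma_\mathup{glo}$. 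The strategy is then to verify the numerator identity and a stability estimate for $\underline{\bm{\beta}}$ separately.

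For the numerator, testing \cref{global multiscale basis_characterization_d} with the admissible $\mathbf{v}_h=\mathbf{p}_\mathup{aux}\in U_\mathup{aux}\subset\bm{\mathit{U}}_h$ gives directly $(\textbf{div}\,\underline{\bm{\beta}},\mathbf{p}_\mathup{aux})=s(\mathbf{p}_\mathup{aux},\mathbf{p}_\mathup{aux})=\norm{\mathbf{p}_\mathup{aux}}_s^2$. So the entire proof reduces to showing the stability estimate $\norm{\underline{\bm{\sigma}}^*}_{\mathcal{A},\textbf{div}}\leq C_g\norm{\mathbf{p}_\mathup{aux}}_s$, which after division yields the claimed inf-sup bound.

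To control $\norm{\textbf{div}\,\underline{\bm{\sigma}}^*}_{L^2}$, I would choose $\mathbf{v}_h=\textbf{div}\,\underline{\bm{\sigma}}^*\in\bm{\mathit{U}}_h$ in \cref{global multiscale basis_characterization_d}, obtaining $\norm{\textbf{div}\,\underline{\bm{\sigma}}^*}_{L^2}^2=-s(\mathbf{p}_\mathup{aux},\textbf{div}\,\underline{\bm{\sigma}}^*)$, and then apply Cauchy--Schwarz in the $s$-inner product together with the elementary bound $\norm{\mathbf{w}}_s\leq(\max\tilde{k})^{1/2}\norm{\mathbf{w}}_{L^2}$ to cancel one copy of $\norm{\textbf{div}\,\underline{\bm{\sigma}}^*}_{L^2}$. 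To control $\norm{\underline{\bm{\sigma}}^*}_{\mathcal{A}}$, I would test \cref{global multiscale basis_characterization_a} with $\underline{\bm{\tau}}_h=\underline{\bm{\sigma}}^*$ to get $(\mathcal{A}\underline{\bm{\sigma}}^*,\underline{\bm{\sigma}}^*)=-(\textbf{div}\,\underline{\bm{\sigma}}^*,\mathbf{u}^*)\leq\norm{\textbf{div}\,\underline{\bm{\sigma}}^*}_{L^2}\norm{\mathbf{u}^*}_{L^2}$, and then bound $\norm{\mathbf{u}^*}_{L^2}$ using the discrete inf-sup \cref{discrete inf-sup} (valid because $\int_\Omega\mathbf{u}^*=\mathbf{0}$) in combination with \cref{global multiscale basis_characterization_a}, which yields $\norm{\mathbf{u}^*}_{L^2}\leq C\norm{\underline{\bm{\sigma}}^*}_\mathcal{A}$. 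Cancelling one factor of $\norm{\underline{\bm{\sigma}}^*}_\mathcal{A}$ produces $\norm{\underline{\bm{\sigma}}^*}_\mathcal{A}\leq C\norm{\textbf{div}\,\underline{\bm{\sigma}}^*}_{L^2}$, and chaining all estimates gives $\norm{\underline{\bm{\sigma}}^*}_{\mathcal{A},\textbf{div}}\leq C_g\norm{\mathbf{p}_\mathup{aux}}_s$.

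The principal subtlety is that the $(\mathcal{A},\textbf{div})$-norm is not the contrast-weighted norm natural to the problem, so the step relating $\norm{\cdot}_s$ to $\norm{\cdot}_{L^2}$ will pick up a $(\max\tilde{k})^{1/2}$ factor that is simply absorbed into $C_g$. This is acceptable here since the statement only asserts existence of some $C_g$; the contrast-robust inf-sup eventually used to carry out the convergence analysis will be obtained separately via the spectral gap $\Lambda$ and the oversampling decay in the subsequent sections. Dividing the numerator identity by the stability bound then concludes the proof.
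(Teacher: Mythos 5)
Your proposal is correct and follows essentially the same route as the paper: construct $\underline{\bm{\beta}}$ as (minus) the solution of the global characterization problem from \cref{characterization of global basis functions}, obtain the numerator identity by testing with $\mathbf{p}_\mathup{aux}$, bound $\norm{\textbf{div}\,\underline{\bm{\beta}}}_{L^2}$ by testing with the divergence, and bound $\norm{\underline{\bm{\beta}}}_{\mathcal{A}}$ via the energy identity together with the discrete inf-sup condition \cref{discrete inf-sup} applied to the multiplier. The only cosmetic difference is the order in which the two factors are cancelled in the $\mathcal{A}$-norm estimate; the constants and conclusion agree with the paper's $C_g=(1+C^2)^{1/2}\tilde{C}^{1/2}$.
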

\begin{proof}
Let $\mathbf{p}_\mathup{aux}\in U_\mathup{aux}$ with $s(\mathbf{p}_\mathup{aux},\mathbf{x})=0$ (where $\mathbf{x}\in\text{span}\{(1,0)^t,(0,1)^t\}$ in two dimension while in three dimension, $\mathbf{x}\in\text{span}\{(1,0,0)^t,(0,1,0)^t,(0,0,1)^t\}$). Similar to Lemma \ref{characterization of global basis functions}, we consider the following problem:
\begin{subequations}
	\label{variational form for inf-sup}
	\begin{align}		
		(\mathcal{A}\bm{\underline{\alpha}},\underline{\bm{\tau}}_h)+(\textbf{div}\underline{\bm{\tau}}_h,\mathbf{p})&=0  \quad \forall \underline{\bm{\tau}}_h\in \underline{\bm{\mathit{\Sigma}}}_h, \label{variational form for inf-sup_a}\\
-(\textbf{div}\bm{\underline{\alpha}},\mathbf{v}_h)&= s(\mathbf{p}_\mathup{aux},\mathbf{v}_h)\quad \forall \mathbf{v}_h\in \bm{\mathit{U}}_h, \label{variational form for inf-sup_d}
	\end{align}	
\end{subequations}
where the solution $\underline{\bm{\alpha}}\in \Sigma_\mathup{glo}$. Taking $\mathbf{v}_h=\mathbf{p}_\mathup{aux}$ in (\ref{variational form for inf-sup_d}), we have $-(\textbf{div}\bm{\underline{\alpha}},\mathbf{p}_\mathup{aux})= s(\mathbf{p}_\mathup{aux},\mathbf{p}_\mathup{aux})=\norm{\mathbf{p}_\mathup{aux}}_s^2$. Next, taking $\underline{\bm{\tau}}_h=\underline{\bm{\alpha}}$ and $\mathbf{v}_h=\mathbf{p}_\mathup{aux}$ in (\ref{variational form for inf-sup}), we have
\[
\norm{\underline{\bm{\alpha}}}_\mathcal{A}^2=-(\textbf{div}\underline{\bm{\alpha}},\mathbf{p})=s(\mathbf{p}_\mathup{aux},\mathbf{p})\leq\tilde{C}^{\frac{1}{2}}\norm{\mathbf{p}_\mathup{aux}}_s\cdot\norm{\mathbf{p}}_{L^2},
\]
where $\tilde{C}$ is the maximum value of $\tilde{k}$. Using the inf-sup condition (\ref{discrete inf-sup}), we have
\[
\norm{\mathbf{p}}_{L^2}\leq C\sup_{\underline{\bm{v}}\in\underline{\bm{\Sigma}}_h}\frac{(\textbf{div}\underline{\bm{v}},\mathbf{p})}{\norm{\underline{\bm{v}}}_{H(\textbf{div})}}=C\sup_{\underline{\bm{v}}\in\underline{\bm{\Sigma}}_h}\frac{-(\mathcal{A}\bm{\underline{\alpha}},\bm{\underline{v}})}{\norm{\underline{\bm{v}}}_{H(\textbf{div})}}\leq C\norm{\bm{\underline{\alpha}}}_\mathcal{A},
\]
which implies $\norm{\bm{\underline{\alpha}}}_\mathcal{A}\leq C\tilde{C}^{\frac{1}{2}}\norm{\mathbf{p}_\mathup{aux}}_s$. Taking $\mathbf{v}_h=\nabla\cdot\underline{\bm{\alpha}}\in \bm{\mathit{U}}_h$, then we have
\[
\norm{\nabla\cdot\bm{\underline{\alpha}}}_{L^2}^2=-s(\mathbf{p}_\mathup{aux},\nabla\cdot\bm{\underline{\alpha}})=-\int_\Omega\tilde{k}\mathbf{p}_\mathup{aux}(\nabla\cdot\bm{\underline{\alpha}})\leq \tilde{C}^{\frac{1}{2}}\norm{\mathbf{p}_\mathup{aux}}_s\norm{\nabla\cdot\bm{\underline{\alpha}}}_{L^2}.
\]
Thus we obtain $\norm{\nabla\cdot\bm{\underline{\alpha}}}_{L^2}\leq\tilde{C}^{\frac{1}{2}}\norm{\mathbf{p}_\mathup{aux}}_s$. Then 
$
\norm{\underline{\bm{\alpha}}}_{\mathcal{A},\textbf{div}}^2=\norm{\underline{\bm{\alpha}}}_{\mathcal{A}}^2+\norm{\nabla\cdot\bm{\underline{\alpha}}}_{L^2}^2\leq \tilde{C}(C^2+1)\norm{\mathbf{p}_\mathup{aux}}_s^2.
$Finally, we obtain that 
\[
\norm{\mathbf{p}_\mathup{aux}}_s\cdot\norm{\underline{\bm{\alpha}}}_{\mathcal{A},\textbf{div}}\leq (1+C^2)^{\frac{1}{2}}\tilde{C}^{\frac{1}{2}}\norm{\mathbf{p}_\mathup{aux}}_s^2.
\]
Taking $\underline{\bm{\beta}}=-\underline{\bm{\alpha}}$ and letting $C_g=(1+C^2)^{\frac{1}{2}}\tilde{C}^{\frac{1}{2}}$, we obtain the desired result.
\end{proof}
\begin{lemma}
\label{glo-fine solution}
Let $(\underline{\bm{\sigma}}_\mathup{glo},\mathbf{u}_\mathup{glo})$ be the solution of (\ref{global variational form}) and let $(\underline{\bm{\sigma}}_h,\mathbf{u}_h)$ be the fine solution of (\ref{discrete formula}). Then we have $\underline{\bm{\sigma}}_\mathup{glo}=\underline{\bm{\sigma}}_h$, $\mathbf{u}_\mathup{glo}=\pi\mathbf{u}_h$ and
\[
\norm{\mathbf{u}_h-\mathbf{u}_\mathup{glo}}_s\leq \frac{1}{\sqrt{\Lambda}}(C_Sh^s\norm{\mathbf{f}}_{L^2}+C\frac{1}{\sqrt{\Lambda}}\norm{(I-\pi)(\tilde{k}^{-1}\mathbf{f})}_s+C\norm{\mathbf{f}}_{L^2}),
\]
where $C,C_S>0$ are independent of mesh sizes $h,H$ and the Lam$\rm \acute{e}$ coefficient $\lambda$, but $C_S$ depends on the regularity of the PDE problem (\ref{our pde}) (see Assumption \ref{0128b}).
\end{lemma}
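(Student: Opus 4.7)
The plan is to first prove the two identities $\underline{\bm{\sigma}}_\mathup{glo}=\underline{\bm{\sigma}}_h$ and $\mathbf{u}_\mathup{glo}=\pi\mathbf{u}_h$ by showing that $(\underline{\bm{\sigma}}_h,\pi\mathbf{u}_h)$ itself is a solution of the global problem \eqref{global variational form}; uniqueness then follows from the inf-sup condition established in \cref{inf-sup for global variational form} (which, together with coercivity on the kernel, yields well-posedness of \eqref{global variational form}). The error bound then reduces to controlling $\norm{(I-\pi)\mathbf{u}_h}_s$ via a spectral argument, and finally to bounding $\norm{\underline{\bm{\sigma}}_h}_\mathcal{A}$ using \cref{convergence form the fine solution} and the continuous problem.

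For the identification step I would first observe that $\underline{\bm{\sigma}}_h\in\Sigma_\mathup{glo}$. Indeed, setting $\mathbf{p}_\mathup{aux}:=-\pi(\tilde{k}^{-1}\mathbf{f})\in U_\mathup{aux}$, equation \eqref{discrete formula_d} reads $-(\textbf{div}\underline{\bm{\sigma}}_h,\mathbf{v}_h)=s(\mathbf{p}_\mathup{aux},\mathbf{v}_h)$ for all $\mathbf{v}_h\in\bm{\mathit{U}}_h$; the compatibility condition $s(\mathbf{p}_\mathup{aux},\mathbf{x})=0$ for constant vectors $\mathbf{x}$ holds because constants lie in $U_\mathup{aux}$ (they correspond to the zero eigenvalue of \eqref{local spectral problem}) and $\int_\Omega\mathbf{f}=\mathbf{0}$, so \cref{characterization of global basis functions} applies. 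Next, for $\mathbf{v}\in U_\mathup{aux}$, the identity $s(\pi(\tilde{k}^{-1}\mathbf{f}),\mathbf{v})=s(\tilde{k}^{-1}\mathbf{f},\mathbf{v})=(\mathbf{f},\mathbf{v})$ immediately gives \eqref{global variational form_d} for $(\underline{\bm{\sigma}}_h,\pi\mathbf{u}_h)$. For \eqref{global variational form_a}, I decompose $\mathbf{u}_h=\pi\mathbf{u}_h+(I-\pi)\mathbf{u}_h$ in \eqref{discrete formula_a} restricted to $\underline{\bm{\tau}}\in\Sigma_\mathup{glo}\subset\underline{\bm{\mathit{\Sigma}}}_h$; the cross term $(\textbf{div}\underline{\bm{\tau}},(I-\pi)\mathbf{u}_h)$ vanishes because \cref{characterization of global basis functions} shows $-(\textbf{div}\underline{\bm{\tau}},\mathbf{v}_h)=s(\mathbf{q},\mathbf{v}_h)$ for some $\mathbf{q}\in U_\mathup{aux}$, and $(I-\pi)\mathbf{u}_h$ is $s$-orthogonal to $U_\mathup{aux}$ by definition of $\pi$. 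This is the conceptual hinge of the proof.

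Given the identification $\mathbf{u}_h-\mathbf{u}_\mathup{glo}=(I-\pi)\mathbf{u}_h$, I estimate element-wise. On $K_i$, write $\mathbf{u}_h|_{K_i}=\sum_{j}c^i_j\mathbf{p}^i_j$ with $c^i_j=s_i(\mathbf{u}_h,\mathbf{p}^i_j)$ in the $s_i$-orthonormal eigenbasis, and set $\underline{\mathbf{x}}_i=\sum_{j>l_i}(\lambda^i_j)^{-1}c^i_j\underline{\bm{\phi}}^i_j\in\underline{\bm{\mathit{\Sigma}}}_h(K_i)$, extended by zero to an element of $\underline{\bm{\mathit{\Sigma}}}_h$. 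Using \eqref{local spectral problem_d} and the $s_i$-orthonormality, $-(\textbf{div}\underline{\mathbf{x}}_i,\mathbf{v}_h)=\sum_{j>l_i}c^i_j s_i(\mathbf{p}^i_j,\mathbf{v}_h)$, so taking $\mathbf{v}_h=\mathbf{u}_h|_{K_i}$ yields $-(\textbf{div}\underline{\mathbf{x}}_i,\mathbf{u}_h)=\norm{(I-\pi_i)\mathbf{u}_h}_{s(K_i)}^2$. Substituting into \eqref{discrete formula_a} with $\underline{\bm{\tau}}_h=\underline{\mathbf{x}}_i$ gives $\norm{(I-\pi_i)\mathbf{u}_h}_{s(K_i)}^2=(\mathcal{A}\underline{\bm{\sigma}}_h,\underline{\mathbf{x}}_i)_{K_i}$. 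From \eqref{local spectral problem} one deduces $(\mathcal{A}\underline{\bm{\phi}}^i_j,\underline{\bm{\phi}}^i_k)=\lambda^i_j\delta_{jk}$, hence $\norm{\underline{\mathbf{x}}_i}_{\mathcal{A}(K_i)}^2=\sum_{j>l_i}(\lambda^i_j)^{-1}|c^i_j|^2\le\Lambda^{-1}\norm{(I-\pi_i)\mathbf{u}_h}_{s(K_i)}^2$. Cauchy--Schwarz and summation over $i$ then give the key estimate $\norm{(I-\pi)\mathbf{u}_h}_s\le\Lambda^{-1/2}\norm{\underline{\bm{\sigma}}_h}_\mathcal{A}$.

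The final step is to bound $\norm{\underline{\bm{\sigma}}_h}_\mathcal{A}\le\norm{\underline{\bm{\sigma}}}_\mathcal{A}+\norm{\underline{\bm{\sigma}}-\underline{\bm{\sigma}}_h}_\mathcal{A}$. Testing \eqref{weak form for our pde_a} with $\underline{\bm{\tau}}=\underline{\bm{\sigma}}$ and using \eqref{oue pde_b} gives $\norm{\underline{\bm{\sigma}}}_\mathcal{A}^2=-(\mathbf{f},\mathbf{u})$, so \cref{0128b} yields $\norm{\underline{\bm{\sigma}}}_\mathcal{A}\le C\norm{\mathbf{f}}_{L^2}$ with $C$ independent of $\lambda$. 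The second term is controlled directly by \cref{convergence form the fine solution}. Combining the three contributions produces exactly the stated bound. The only delicate point is the identification step of paragraph two; once $\underline{\bm{\sigma}}_h\in\Sigma_\mathup{glo}$ and the $s$-orthogonality cancellation are secured via \cref{characterization of global basis functions}, the remainder is the standard spectral CEM-GMsFEM argument adapted to the mixed setting.
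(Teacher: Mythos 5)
Your proposal is correct and follows essentially the same route as the paper: establish $\underline{\bm{\sigma}}_h\in\Sigma_\mathup{glo}$ via \cref{characterization of global basis functions}, use the inf-sup condition of \cref{inf-sup for global variational form} for uniqueness and the fact that $\textbf{div}\,\Sigma_\mathup{glo}$ pairs only with $\pi$-components to get $\mathbf{u}_\mathup{glo}=\pi\mathbf{u}_h$, then the element-wise spectral test function $\underline{\mathbf{x}}_i=\sum_{j>l_i}(\lambda^i_j)^{-1}c^i_j\underline{\bm{\phi}}^i_j$ to obtain $\norm{(I-\pi)\mathbf{u}_h}_s\le\Lambda^{-1/2}\norm{\underline{\bm{\sigma}}_h}_\mathcal{A}$, and finally \cref{convergence form the fine solution} plus the bound $\norm{\underline{\bm{\sigma}}}_\mathcal{A}\le C\norm{\mathbf{f}}_{L^2}$. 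The only cosmetic deviation is that you bound $\norm{\mathbf{u}}_{L^2}$ via \cref{0128b} where the paper uses Korn's inequality; both yield a $\lambda$-independent constant.
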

\begin{proof}
By Lemma \ref{characterization of global basis functions}, we see that $\underline{\bm{\sigma}}_h\in\Sigma_\mathup{glo}\subset\Sigma_h$. In terms of Lemma \ref{inf-sup for global variational form}, we know the existence and uniqueness of solution of the system (\ref{global variational form}). Then we have $\underline{\bm{\sigma}}_h=\underline{\bm{\sigma}}_\mathup{glo}$. We note that (\ref{global variational form}) is a conforming approximation to (\ref{discrete formula}), and that $\textbf{div}\Sigma_\mathup{glo}\subset\bm{\mathit{P}}(\tilde{k}U_\mathup{aux})$ by (\ref{global multiscale basis_d}). Then, using  (\ref{global variational form_a}) and (\ref{discrete formula_a}), we have
\[
(\textbf{div}\underline{\bm{\tau}},\mathbf{u}_h-\mathbf{u}_\mathup{glo})=0,\quad \forall \underline{\bm{\tau}}\in \Sigma_\mathup{glo}.
\]
Since $\textbf{div}\Sigma_\mathup{glo}\subset\bm{\mathit{P}}(\tilde{k}U_\mathup{aux})$, we have that $\mathbf{u}_\mathup{glo}=\pi\mathbf{u}_h$. Then $\norm{\mathbf{u}_\mathup{glo}-\mathbf{u}_h}_s=\norm{(I-\pi)\mathbf{u}_h}_s$. We define $\bm{\mu}_i$ to be the restriction of $(I-\pi)\mathbf{u}_h$ on $K_i$. Then by the definition of $\pi$, we can write $\bm{\mu}_i=\sum_{j=l_i+1}^{L_i}c^i_j\mathbf{p}^i_j$. We define $\underline{\bm{w}}_i\in\underline{\bm{\Sigma}}_h(K_i)$ as $\underline{\bm{w}}_i=\sum_{j=l_i+1}^{L_i}(\lambda^i_j)^{-1}c^i_j\underline{\bm{\phi}}^i_j$. Letting $\underline{\bm{\tau}}_h=\underline{\bm{w}}_i$ in (\ref{discrete formula_a}), we have
$
(\mathcal{A}\underline{\bm{\sigma}}_h,\underline{\bm{w}}_i)+(\textbf{div}\underline{\bm{w}}_i,\mathbf{u}_h)=0. 
$
Using the local spectral problem (\ref{local spectral problem}), we have 
$
-(\textbf{div}\underline{\bm{w}}_i,\mathbf{u}_h)=-(\textbf{div}\underline{\bm{w}}_i,(I-\pi)\mathbf{u}_h)=\sum_{j=l_i+1}^{L_i}(c^i_j)^2=\norm{(I-\pi)\mathbf{u}_h}_s^2
$
and
$
\norm{\underline{\bm{w}}_i}_\mathcal{A}^2\leq \frac{1}{\Lambda}\sum_{j=l_i+1}^{L_i}(c^i_j)^2=\frac{1}{\Lambda}\norm{(I-\pi)\mathbf{u}_h}_s^2.
$
Combining above three relationships, we have
$
\norm{(I-\pi)\mathbf{u}_h}_s^2\leq \norm{\underline{\bm{\sigma}}_h}_\mathcal{A}\cdot\frac{1}{\sqrt{\Lambda}}\norm{(I-\pi)\mathbf{u}_h}_s
$
Therefore, we have
\[
\norm{(I-\pi)\mathbf{u}_h}_s\leq \frac{1}{\sqrt{\Lambda}}\norm{\underline{\bm{\sigma}}_h}_\mathcal{A}.
\]
Finally, by Lemma \ref{convergence form the fine solution}, we have
\begin{align*}
\norm{\mathbf{u}_\mathup{glo}-\mathbf{u}_h}_s&\leq \frac{1}{\sqrt{\Lambda}}(\norm{\underline{\bm{\sigma}}_h-\underline{\bm{\sigma}}}_\mathcal{A}+\norm{\underline{\bm{\sigma}}}_\mathcal{A})\leq C_S\frac{1}{\sqrt{\Lambda}}h^s\norm{\mathbf{f}}_{L^2}+C\frac{1}{\Lambda}\norm{(I-\pi)(\tilde{k}^{-1}\mathbf{f})}_s+C\frac{1}{\sqrt{\Lambda}}\norm{\underline{\bm{\sigma}}}_\mathcal{A}.
\end{align*}
By (\ref{weak form for our pde}) and the Korn's inequality (see, for instance, \cite{PG_2013}), we can obtain
\begin{align*}
(\mathcal{A}\underline{\bm{\sigma}},\underline{\bm{\sigma}})&=-(\mathbf{f},\mathbf{u})\leq \norm{\mathbf{f}}_{L^2}\cdot\norm{\mathbf{u}}_{L^2}\leq \norm{\mathbf{f}}_{L^2}\cdot\norm{\underline{\bm{\epsilon}}(\mathbf{u})}_{L^2}\leq C\norm{\mathbf{f}}_{L^2}\cdot\norm{\underline{\bm{\sigma}}}_{\mathcal{A}}.
\end{align*}
Thus, we have $\norm{\underline{\bm{\sigma}}}_\mathcal{A}\leq C\norm{\mathbf{f}}_{L^2}$. So we get there exists $C,C_S>0$ independent of $h$ and the Lame constant $\lambda$ ($C_S$ depends on the regularity of the PDE problem (\ref{our pde})) such that
\[
\norm{\mathbf{u}_\mathup{glo}-\mathbf{u}_h}_s\leq \frac{1}{\sqrt{\Lambda}}(C_Sh^s\norm{\mathbf{f}}_{L^2}+C\frac{1}{\sqrt{\Lambda}}\norm{(I-\pi)(\tilde{k}^{-1}\mathbf{f})}_s+C\norm{\mathbf{f}}_{L^2}).
\]
\end{proof}
\subsection{Well-posedness of global and multiscale basis functions}
\label{Well-posedness of global and multiscale basis functions}
We discuss the well-posedness of the multiscale basis functions defined in (\ref{multiscale basis}) in the following. For simplicity, let $k=\lambda+2\mu$ be piecewise constant over the fine mesh with highly heterogeneous and high contrast. 
\begin{lemma}
There exists a constant $C_\star$ such that $\forall$ $(\underline{\bm{\sigma}}_h,\mathbf{u}_h)\in\underline{\bm{\Sigma}}_h(K_i^+)\times \bm{\mathit{U}}_h(K_i^+)$, $\exists$ a pair $(\underline{\bm{w}}_h,\mathbf{p}_h)\in\underline{\bm{\Sigma}}_h(K_i^+)\times \bm{\mathit{U}}_h(K_i^+)$ such that
\[
\norm{(\underline{\bm{\sigma}}_h,\mathbf{u}_h)}\leq C_\star\frac{A((\underline{\bm{\sigma}}_h,\mathbf{u}_h),(\underline{\bm{w}}_h,\mathbf{p}_h))}{\norm{(\underline{\bm{w}}_h,\mathbf{p}_h)}},
\]
where $A((\underline{\bm{\sigma}}_h,\mathbf{u}_h),(\underline{\bm{w}}_h,\mathbf{p}_h))=(\mathcal{A}\underline{\bm{\sigma}}_h,\underline{\bm{w}}_h)+(\textbf{\textup{div}}\underline{\bm{w}}_h,\mathbf{u}_h)+(\textbf{\textup{div}}\underline{\bm{\sigma}}_h,\mathbf{p}_h)-s(\pi\mathbf{u}_h,\pi\mathbf{p}_h)$ and $\norm{(\underline{\bm{\sigma}}_h,\mathbf{u}_h)}^2=\norm{\underline{\bm{\sigma}}_h}_\mathcal{A}^2+\norm{\mathbf{u}_h}_s^2$.
\end{lemma}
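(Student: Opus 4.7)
The plan is to establish the inf-sup condition by the standard trick of augmenting a symmetric test function with a spectral perturbation. First observe that the naive choice $(\underline{\bm{w}}_h,\mathbf{p}_h)=(\underline{\bm{\sigma}}_h,-\mathbf{u}_h)$ cancels the $\textbf{div}\underline{\bm{\sigma}}_h$ contributions and yields $A=\norm{\underline{\bm{\sigma}}_h}_\mathcal{A}^2+\norm{\pi\mathbf{u}_h}_s^2$, which controls only the $\pi$-projected part of $\mathbf{u}_h$. The central task is therefore to recover control of $\norm{(I-\pi)\mathbf{u}_h}_s$, and for this I would invoke the local spectral problem (\ref{local spectral problem}) to construct an auxiliary stress correction.

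More precisely, on each coarse element $K_j\subset K_i^+$, I would expand $(I-\pi)\mathbf{u}_h|_{K_j}=\sum_{k>l_j}c_k^j\mathbf{p}_k^j$ in the $s_j$-orthonormal eigenbasis and set
\[
\underline{\bm{z}}_h|_{K_j}=-\sum_{k>l_j}(\lambda_k^j)^{-1}c_k^j\,\underline{\bm{\phi}}_k^j,
\]
extended by zero across coarse interfaces. Because each $\underline{\bm{\phi}}_k^j$ has vanishing normal trace on $\partial K_j$, the assembled $\underline{\bm{z}}_h$ lies in $\underline{\bm{\Sigma}}_h(K_i^+)$. Testing (\ref{local spectral problem_a}) with $\underline{\bm{\tau}}_h=\underline{\bm{\phi}}_k^j$ and (\ref{local spectral problem_d}) with $\mathbf{v}_h=\mathbf{p}_k^j$ yields the identity $(\mathcal{A}\underline{\bm{\phi}}_k^j,\underline{\bm{\phi}}_k^j)=\lambda_k^j$, from which I obtain the two key properties
\[
(\textbf{div}\underline{\bm{z}}_h,\mathbf{u}_h)=\norm{(I-\pi)\mathbf{u}_h}_s^2 \quad \text{and} \quad \norm{\underline{\bm{z}}_h}_\mathcal{A}^2\leq \Lambda^{-1}\norm{(I-\pi)\mathbf{u}_h}_s^2.
\]

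I would then test $A$ with $(\underline{\bm{w}}_h,\mathbf{p}_h)=(\underline{\bm{\sigma}}_h+\delta\underline{\bm{z}}_h,-\mathbf{u}_h)$ for a parameter $\delta>0$ to be fixed. A direct computation (in which the $\textbf{div}\underline{\bm{\sigma}}_h$ terms cancel) gives
\[
A((\underline{\bm{\sigma}}_h,\mathbf{u}_h),(\underline{\bm{w}}_h,\mathbf{p}_h))=\norm{\underline{\bm{\sigma}}_h}_\mathcal{A}^2+\norm{\pi\mathbf{u}_h}_s^2+\delta(\mathcal{A}\underline{\bm{\sigma}}_h,\underline{\bm{z}}_h)+\delta\norm{(I-\pi)\mathbf{u}_h}_s^2.
\]
Young's inequality on the cross term together with the $\mathcal{A}$-bound on $\underline{\bm{z}}_h$ absorbs half of $\norm{\underline{\bm{\sigma}}_h}_\mathcal{A}^2$ at a cost of $\tfrac{\delta^2}{2\Lambda}\norm{(I-\pi)\mathbf{u}_h}_s^2$, leaving coefficient $\delta-\delta^2/(2\Lambda)$ in front of $\norm{(I-\pi)\mathbf{u}_h}_s^2$. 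Choosing $\delta=\min\{1,\Lambda\}$ forces this coefficient to be at least $\delta/2$, hence $A\gtrsim \norm{(\underline{\bm{\sigma}}_h,\mathbf{u}_h)}^2$ with a constant depending only on $\Lambda$. A triangle-inequality bound $\norm{(\underline{\bm{w}}_h,\mathbf{p}_h)}\lesssim\norm{(\underline{\bm{\sigma}}_h,\mathbf{u}_h)}$, again with constant depending only on $\Lambda$, then closes the argument and delivers $C_\star$.

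The main obstacle is the asymmetric stabilization $-s(\pi\mathbf{u}_h,\pi\mathbf{p}_h)$, which renders the symmetric test blind to $(I-\pi)\mathbf{u}_h$; the delicate point is to exhibit a divergence-compatible correction whose $\mathcal{A}$-norm scales like $\Lambda^{-1/2}\norm{(I-\pi)\mathbf{u}_h}_s$. The spectral eigenfunctions $\underline{\bm{\phi}}_k^j$ are exactly the right building blocks because (\ref{local spectral problem}) simultaneously links their $\mathcal{A}$-norms to $\lambda_k^j$ and their divergences to $s$-inner products, which is precisely what makes the Young-type absorption close and renders the estimate independent of the contrast in $\lambda,\mu$.
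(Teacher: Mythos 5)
Your proof is correct and follows essentially the same route as the paper: both recover $\norm{(I-\pi)\mathbf{u}_h}_s$ by assembling the spectral correction $\sum_{k>l_j}(\lambda_k^j)^{-1}c_k^j\,\underline{\bm{\phi}}_k^j$ from the local eigenproblem and then test with a pair of the form $(\underline{\bm{\sigma}}_h+\gamma\underline{\bm{z}}_h,\,-\mathbf{u}_h+\cdots)$. The only difference is that the paper also inserts $\tilde{k}^{-1}\nabla\cdot\underline{\bm{\sigma}}_h$ into the second test component to additionally control $\norm{\tilde{k}^{-\frac{1}{2}}\nabla\cdot\underline{\bm{\sigma}}_h}_{L^2}$, which the stated norm $\norm{(\underline{\bm{\sigma}}_h,\mathbf{u}_h)}$ does not require, so your simpler choice (with the Young absorption and $\delta=\min\{1,\Lambda\}$) suffices for the lemma as written.
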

\begin{proof}
Let $\mathbf{q}_h\in\bm{\mathit{U}}_h(K_i^+)$. Consider a coarse element $K_i\subset K_i^+$ and $\mathbf{q}^i_h=\mathbf{q}_h|_{K_i}$. Note that, we can write $\mathbf{q}^i_h=\sum_{j=1}^{L_i}c^i_j\mathbf{p}^i_j$ since the set of eigenfunctions $\{\mathbf{p}^i_j\}_{j=1}^{L_i}$ forms a basis for the space $\bm{\mathit{U}}_h(K_i)$. And we define $\underline{\bm{v}}_h^i=\sum_{j=l_i+1}^{L_i}c^i_j/\lambda^i_j\underline{\bm{\phi}}^i_j$ and $\underline{\bm{v}}_h=\sum_i\underline{\bm{v}}_h^i$, where the sum is taken over all $K_i\subset K_i^+$. Note that, by the orthogonality of eigenfunctions,
\begin{align*}
\norm{\underline{\bm{v}}_h}_\mathcal{A}^2&=\sum_{K_i\in K_i^+}\sum_{j=l_i+1}^{L_i}(c^i_j/\lambda^i_j)^2\norm{\underline{\bm{\phi}}^i_j}_\mathcal{A}^2=\sum_{K_i\in K_i^+}\sum_{j=l_i+1}^{L_i}(c^i_j/\lambda^i_j)^2\lambda^i_j\norm{\mathbf{p}^i_j}_s^2\leq \frac{1}{\Lambda}\norm{(I-\pi)\mathbf{q}_h}_s^2.
\end{align*}
On the other hand, by the local spectral problem (\ref{local spectral problem}), we have $\nabla\cdot\underline{\bm{v}}_h^i=-\sum_{j=l_i+1}^{L_i}c^i_j\tilde{k}\mathbf{p}^i_j$. So we obtain that
$
\norm{\tilde{k}^{-\frac{1}{2}}\nabla\cdot\underline{\bm{v}}_h}_{L^2}^2=\sum_{K_i\subset K_i^+}\norm{\tilde{k}^{-\frac{1}{2}}\nabla\cdot\underline{\bm{v}}_h^i}_{L^2}^2=\norm{(I-\pi)\mathbf{q}_h}_s^2.
$
Thus, by above estimates and the definition of $\norm{\cdot}_\Sigma$ we have
\begin{align}
\label{v_h Sigma norm}
\norm{\underline{\bm{v}}_h}_\Sigma^2\leq(1+\frac{1}{\Lambda})\norm{(I-\pi)\mathbf{q}_h}_s^2.
\end{align}
Next, $(\textbf{div}\underline{\bm{v}}_h,\mathbf{q}_h)=\sum_{K_i\subset K_i^+}\int_{K_i}\sum_{j=l_i+1}^{L_i}c^i_j\tilde{k}\mathbf{p}^i_j\mathbf{q}_h^i=\norm{(I-\pi)\mathbf{q}_h}_s^2$, which implies
\[
\frac{(\textbf{div}\underline{\bm{v}}_h,\mathbf{q}_h)^2}{\norm{\underline{\bm{v}}_h}_\Sigma^2}\geq \frac{\norm{(I-\pi)\mathbf{q}_h}_s^4}{(1+\frac{1}{\Lambda})\norm{(I-\pi)\mathbf{q}_h}_s^2}=(1+\frac{1}{\Lambda})^{-1}\norm{(I-\pi)\mathbf{q}_h}_s^2.
\]
Therefore, for any $\mathbf{q}_h\in\bm{\mathit{U}}_h(K_i^+)$, there is $\underline{\bm{v}}_h\in\underline{\bm{\Sigma}}_h(K_i^+)$ such that
\begin{align}
\label{inf-sup for global basis}
\norm{\mathbf{q}_h}_s^2=\norm{(I-\pi)\mathbf{q}_h}_s^2+\norm{\pi\mathbf{q}_h}_s^2\leq (1+\frac{1}{\Lambda})\frac{(\textbf{div}\underline{\bm{v}}_h,\mathbf{q}_h)^2}{\norm{\underline{\bm{v}}_h}_\Sigma^2}+\norm{\pi\mathbf{q}_h}_s^2.
\end{align}
By the definition of $A((\underline{\bm{\sigma}}_h,\mathbf{u}_h),(\underline{\bm{w}}_h,\mathbf{p}_h))$, 
 we have $\norm{\underline{\bm{\sigma}}_h}_\mathcal{A}^2+\norm{\pi(\mathbf{p}_h)}_s^2=A((\underline{\bm{\sigma}}_h,\mathbf{u}_h),(\underline{\bm{\sigma}}_h,-\mathbf{u}_h))$. In terms of the proof of (\ref{v_h Sigma norm}), we know $\exists\underline{\bm{y}}_h$ such that $\norm{\underline{\bm{y}}_h}_\Sigma=\norm{(I-\pi)\mathbf{u}_h}_s$ and
$
\norm{\mathbf{u}_h}_s^2\leq (1+\frac{1}{\Lambda})\frac{(\textbf{div}\underline{\bm{y}}_h,\mathbf{u}_h)^2}{\norm{\underline{\bm{y}}_h}_\Sigma^2}+\norm{\pi\mathbf{u}_h}_s^2.
$
Then we can obtain
\begin{align*}
A((\underline{\bm{\sigma}}_h,\mathbf{u}_h),(\underline{\bm{y}}_h,\mathbf{0}))
\geq ((1+\frac{1}{\Lambda})^{-1}\norm{(I-\pi)\mathbf{u}_h}_s-\norm{\underline{\bm{\sigma}}_h}_\mathcal{A})\norm{(I-\pi)\mathbf{u}_h}_s
\end{align*}
and
\begin{align*}
A((\underline{\bm{\sigma}}_h,\mathbf{u}_h),(\mathbf{0},\tilde{k}^{-1}\nabla\cdot\underline{\bm{\sigma}}_h))
\geq \norm{\tilde{k}^{-\frac{1}{2}}}_{L^2}(\norm{\tilde{k}^{-\frac{1}{2}}}_{L^2}-\norm{\pi(\mathbf{u}_h)}_s).
\end{align*}
Next, we take $(\underline{\bm{w}}_h,\mathbf{p}_h)=(\underline{\bm{\sigma}}_h+\gamma\underline{\bm{y}}_h,-\mathbf{u}_h+\tilde{k}^{-1}\nabla\cdot\underline{\bm{\sigma}}_h)$, where $\gamma\in\mathbb{R}$ is a constant. Then by a simple computation, we have
\begin{align*}
A((\underline{\bm{\sigma}}_h,\mathbf{u}_h),(\underline{\bm{w}}_h,\mathbf{p}_h))
\geq \norm{\underline{\bm{\sigma}}_h}_\mathcal{A}^2+\norm{\pi(\mathbf{u}_h)}_s^2+\norm{\tilde{k}^{-\frac{1}{2}}\nabla\cdot\underline{\bm{\sigma}}_h}_{L^2}^2+\gamma(2(1+\frac{1}{\Lambda})^{-1}-\gamma)\norm{(I-\pi)\mathbf{u}_h}_s^2.
\end{align*}
Let $\gamma=(1+\frac{1}{\Lambda})^{-1}$, then we obtain
$
A((\underline{\bm{\sigma}}_h,\mathbf{u}_h),(\underline{\bm{w}}_h,\mathbf{p}_h))\geq \norm{\underline{\bm{\sigma}}_h}_\mathcal{A}^2+\norm{\pi(\mathbf{u}_h)}_s^2+\norm{\tilde{k}^{-\frac{1}{2}}\nabla\cdot\underline{\bm{\sigma}}_h}_{L^2}^2+1+\frac{1}{\Lambda})^{-2}\norm{(I-\pi)\mathbf{u}_h}_s^2
$.
Since $\norm{(\underline{\bm{w}}_h,\mathbf{p}_h)}\leq 2\norm{(\underline{\bm{\sigma}}_h,\mathbf{u}_h)}+\norm{\tilde{k}^{-\frac{1}{2}}\nabla\cdot\underline{\bm{\sigma}}_h}_{L^2}+C\norm{(I-\pi)\mathbf{u}_h}_s$, we obtain that there exist a constant $C_\star$ such that
$
\norm{(\underline{\bm{w}}_h,\mathbf{p}_h)}\cdot\norm{(\underline{\bm{\sigma}}_h,\mathbf{u}_h)}\leq C_\star A((\underline{\bm{\sigma}}_h,\mathbf{u}_h),(\underline{\bm{w}}_h,\mathbf{p}_h)).
$
\end{proof}
\begin{lemma}
\label{decay_property_1}
Let $(\bm{\underline{\psi}}^i_j,\mathbf{q}^i_j)\in\underline{\bm{\mathit{\Sigma}}}_h\times\bm{\mathit{U}}_h$ be the solution of (\ref{global multiscale basis}) and let $(\bm{\underline{\psi}}^i_{j,\mathup{ms}},\mathbf{q}^i_{j,ms})\in\underline{\bm{\mathit{\Sigma}}}_h(K_i)\times\bm{\mathit{U}}_h(K_i)$ be the solution of (\ref{multiscale basis}). Then we have the following approximation property
\[
\norm{\bm{\underline{\psi}}^i_j-\bm{\underline{\psi}}^i_{j,\mathup{ms}}}_\Sigma^2+\norm{\mathbf{q}^i_j-\mathbf{q}^i_{j,ms}}_s^2\leq C(1+\frac{1}{\Lambda})(\norm{\bm{\underline{\psi}}^i_j-\underline{\bm{\tau}}_h}_\Sigma^2+\norm{\mathbf{q}^i_j-\mathbf{v}_h}_s^2),
\]
for all $(\underline{\bm{\tau}}_h,\mathbf{v}_h)\in\underline{\bm{\mathit{\Sigma}}}_h(K_i^+)\times\bm{\mathit{U}}_h(K_i^+)$.
\end{lemma}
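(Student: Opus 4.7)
The plan is to run a Strang/C\'ea-type argument built on the composite bilinear form
\[
A((\underline{\bm{\sigma}}_h,\mathbf{u}_h),(\underline{\bm{w}}_h,\mathbf{p}_h))=(\mathcal{A}\underline{\bm{\sigma}}_h,\underline{\bm{w}}_h)+(\textbf{div}\underline{\bm{w}}_h,\mathbf{u}_h)+(\textbf{div}\underline{\bm{\sigma}}_h,\mathbf{p}_h)-s(\pi\mathbf{u}_h,\pi\mathbf{p}_h)
\]
introduced in the previous lemma. The first step is to recast both saddle-point systems (\ref{multiscale basis}) and (\ref{global multiscale basis}) in this compact form. A direct computation, substituting (\ref{multiscale basis_a}) into the definition of $A$, shows that $(\bm{\underline{\psi}}^i_{j,\mathup{ms}},\mathbf{q}^i_{j,\mathup{ms}})$ satisfies $A((\bm{\underline{\psi}}^i_{j,\mathup{ms}},\mathbf{q}^i_{j,\mathup{ms}}),(\underline{\bm{\tau}}_h,\mathbf{v}_h))=-s(\mathbf{p}^i_j,\mathbf{v}_h)$ for every $(\underline{\bm{\tau}}_h,\mathbf{v}_h)\in \underline{\bm{\mathit{\Sigma}}}_h(K_i^+)\times \bm{\mathit{U}}_h(K_i^+)$, and the same identity holds for $(\bm{\underline{\psi}}^i_j,\mathbf{q}^i_j)$ but for all test pairs in the global space $\underline{\bm{\mathit{\Sigma}}}_h\times \bm{\mathit{U}}_h$. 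Since $\underline{\bm{\mathit{\Sigma}}}_h(K_i^+)\times \bm{\mathit{U}}_h(K_i^+)$ is a conforming subspace, subtracting yields Galerkin orthogonality:
\[
A\bigl((\bm{\underline{\psi}}^i_j-\bm{\underline{\psi}}^i_{j,\mathup{ms}},\mathbf{q}^i_j-\mathbf{q}^i_{j,\mathup{ms}}),(\underline{\bm{w}}_h,\mathbf{p}_h)\bigr)=0,\quad\forall(\underline{\bm{w}}_h,\mathbf{p}_h)\in \underline{\bm{\mathit{\Sigma}}}_h(K_i^+)\times \bm{\mathit{U}}_h(K_i^+).
\]

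Next, I would apply the inf-sup (stability) lemma on the oversampling domain $K_i^+$ to the pair $(\bm{\underline{\psi}}^i_{j,\mathup{ms}}-\underline{\bm{\tau}}_h,\mathbf{q}^i_{j,\mathup{ms}}-\mathbf{v}_h)$, for an arbitrary $(\underline{\bm{\tau}}_h,\mathbf{v}_h)\in \underline{\bm{\mathit{\Sigma}}}_h(K_i^+)\times \bm{\mathit{U}}_h(K_i^+)$. The constant $C_\star$ from the previous lemma scales like $(1+1/\Lambda)$ (this is visible from the bound $\norm{\mathbf{u}_h}_s^2\le (1+1/\Lambda)(\textbf{div}\underline{\bm{v}}_h,\mathbf{u}_h)^2/\norm{\underline{\bm{v}}_h}_\Sigma^2+\norm{\pi\mathbf{u}_h}_s^2$ used there), and this is precisely the source of the $(1+1/\Lambda)$ factor in the lemma. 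Using Galerkin orthogonality, the supremum on the right-hand side of the inf-sup can be rewritten with $\bm{\underline{\psi}}^i_{j,\mathup{ms}}$ replaced by $\bm{\underline{\psi}}^i_j$ and $\mathbf{q}^i_{j,\mathup{ms}}$ by $\mathbf{q}^i_j$.

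The continuity of $A$ in the $\norm{\cdot}_\Sigma+\norm{\cdot}_s$ combined norm is then the routine ingredient: each of the four terms in $A$ is bounded by Cauchy--Schwarz, with the mixed divergence pairing controlled via $|(\textbf{div}\underline{\bm{w}}_h,\mathbf{u}_h)|\le \norm{\tilde k^{-1/2}\textbf{div}\underline{\bm{w}}_h}_{L^2}\norm{\tilde k^{1/2}\mathbf{u}_h}_{L^2}\le\norm{\underline{\bm{w}}_h}_\Sigma\norm{\mathbf{u}_h}_s$, and the $s$-term handled by the fact that $\pi$ is an $s$-orthogonal projection. This gives a continuity constant independent of $\Lambda$. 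Combining inf-sup, Galerkin orthogonality, and continuity yields
\[
\norm{\bm{\underline{\psi}}^i_{j,\mathup{ms}}-\underline{\bm{\tau}}_h}_\Sigma^2+\norm{\mathbf{q}^i_{j,\mathup{ms}}-\mathbf{v}_h}_s^2\le C(1+1/\Lambda)\bigl(\norm{\bm{\underline{\psi}}^i_j-\underline{\bm{\tau}}_h}_\Sigma^2+\norm{\mathbf{q}^i_j-\mathbf{v}_h}_s^2\bigr),
\]
and a triangle inequality then produces the claimed quasi-optimality estimate.

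The main technical obstacle, in my view, is the careful tracking of the $\Lambda$-dependence in the stability constant so that the resulting factor matches $(1+1/\Lambda)$ exactly, since the previous lemma's stability proof hides this factor inside $C_\star$. A secondary subtlety is confirming that the inf-sup proof from the previous lemma truly transfers to the \emph{local} space $\underline{\bm{\mathit{\Sigma}}}_h(K_i^+)\times \bm{\mathit{U}}_h(K_i^+)$; but this is immediate since the construction of the test function $\underline{\bm{v}}_h$ in that proof uses only eigenfunctions supported on coarse elements inside $K_i^+$, hence is local by design.
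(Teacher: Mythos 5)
Your skeleton is the right one and matches the paper's strategy in spirit: subtracting \cref{multiscale basis} from \cref{global multiscale basis} gives Galerkin orthogonality of the error with respect to the composite form $A$ over $\underline{\bm{\mathit{\Sigma}}}_h(K_i^+)\times\bm{\mathit{U}}_h(K_i^+)$, the local stability lemma supplies the $(1+\tfrac{1}{\Lambda})$ factor, and a triangle inequality finishes. The identity $A((\bm{\underline{\psi}}^i_{j,\mathup{ms}},\mathbf{q}^i_{j,\mathup{ms}}),(\underline{\bm{\tau}}_h,\mathbf{v}_h))=-s(\mathbf{p}^i_j,\mathbf{v}_h)$ is correct, and your remark that the stability construction is local by design is also right. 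However, there is a genuine gap in the step you call routine: the continuity of $A$ and the inf-sup stability are not available in the \emph{same} norm pair, so the abstract Babu\v{s}ka/C\'ea argument does not close as stated. The well-posedness lemma is formulated in the norm $\norm{(\underline{\bm{\sigma}}_h,\mathbf{u}_h)}^2=\norm{\underline{\bm{\sigma}}_h}_\mathcal{A}^2+\norm{\mathbf{u}_h}_s^2$, which carries no control of $\textbf{div}\,\underline{\bm{\sigma}}_h$; your continuity bound $|(\textbf{div}\underline{\bm{w}}_h,\mathbf{u}_h)|\le\norm{\underline{\bm{w}}_h}_\Sigma\norm{\mathbf{u}_h}_s$ requires the stronger $\Sigma$-norm on the test stress, and the ratio $\norm{\underline{\bm{w}}_h}_\Sigma/\norm{\underline{\bm{w}}_h}_\mathcal{A}$ is not uniformly bounded. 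Combining an inf-sup estimate normalized in one norm with a continuity constant valid only in a strictly stronger norm does not yield quasi-optimality. A second, smaller omission is that your concluding inequality (and the lemma itself) carries $\norm{\cdot}_\Sigma$ on the left-hand stress error, i.e.\ it includes $\norm{\tilde{k}^{-\frac{1}{2}}\textbf{div}(\bm{\underline{\psi}}^i_j-\bm{\underline{\psi}}^i_{j,\mathup{ms}})}_{L^2}$, which the $\mathcal{A}\times s$ stability bound cannot deliver on its own.

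The paper avoids both problems by working directly with the error equations rather than through the abstract framework. It expands $\norm{\bm{\underline{\psi}}^i_j-\bm{\underline{\psi}}^i_{j,\mathup{ms}}}_\mathcal{A}^2+\norm{\pi(\mathbf{q}^i_j-\mathbf{q}^i_{j,\mathup{ms}})}_s^2$ against an arbitrary $(\underline{\bm{\tau}}_h,\mathbf{v}_h)$, isolates the two divergence cross terms, and treats each by hand: the term $(\textbf{div}(\underline{\bm{\tau}}_h-\bm{\underline{\psi}}^i_j),\mathbf{q}^i_j-\mathbf{q}^i_{j,\mathup{ms}})$ is controlled by first bounding $\norm{\mathbf{v}_h-\mathbf{q}^i_{j,\mathup{ms}}}_s$ via the local inf-sup estimate \cref{inf-sup for global basis} combined with the first orthogonality relation, while the term $(\textbf{div}(\bm{\underline{\psi}}^i_j-\bm{\underline{\psi}}^i_{j,\mathup{ms}}),\mathbf{v}_h-\mathbf{q}^i_j)$ is handled by inserting $\mathbf{v}_h=\tilde{k}^{-1}\nabla\cdot(\bm{\underline{\psi}}^i_j-\bm{\underline{\psi}}^i_{j,\mathup{ms}})$ into the second orthogonality relation to obtain $\norm{\tilde{k}^{-\frac{1}{2}}\nabla\cdot(\bm{\underline{\psi}}^i_j-\bm{\underline{\psi}}^i_{j,\mathup{ms}})}_{L^2}\le\norm{\pi(\mathbf{q}^i_j-\mathbf{q}^i_{j,\mathup{ms}})}_s$. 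This last identity is also exactly what upgrades the $\mathcal{A}$-norm control of the stress error to the $\Sigma$-norm claimed in the lemma. To repair your proposal you would need to either prove a stability estimate for $A$ in the full $\Sigma\times s$ norm pair (which is essentially what these manipulations accomplish), or reproduce these two divergence estimates explicitly; as written, the argument does not go through.
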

\begin{proof}
Combining (\ref{global multiscale basis}) and (\ref{multiscale basis}), we have
\begin{subequations}
	\label{global-local}
	\begin{align}		
		(\mathcal{A}(\bm{\underline{\psi}}^i_j-\bm{\underline{\psi}}^i_{j,\mathup{ms}}),\underline{\bm{\tau}}_h)+(\textbf{div}\underline{\bm{\tau}}_h,\mathbf{q}^i_j-\mathbf{q}^i_{j,\mathup{ms}})&=0  \quad \forall \underline{\bm{\tau}}_h\in \underline{\bm{\mathit{\Sigma}}}_h(K_i^+), \label{global-local_a}\\
s(\pi(\mathbf{q}^i_j-\mathbf{q}^i_{j,\mathup{ms}}),\pi\mathbf{v}_h)-(\textbf{div}(\bm{\underline{\psi}}^i_j-\bm{\underline{\psi}}^i_{j,\mathup{ms}}),\mathbf{v}_h)&= 0\quad \forall \mathbf{v}_h\in \bm{\mathit{U}}_h(K_i^+).\label{global-local_d}
	\end{align}	
\end{subequations}
Then, for all $(\underline{\bm{\tau}}_h,\mathbf{v}_h)\in\underline{\bm{\mathit{\Sigma}}}_h(K_i^+)\times\bm{\mathit{U}}_h(K_i^+)$, using (\ref{global-local}), we have
\begin{align*}
&\quad(\mathcal{A}(\bm{\underline{\psi}}^i_j-\bm{\underline{\psi}}^i_{j,\mathup{ms}}),\bm{\underline{\psi}}^i_j-\bm{\underline{\psi}}^i_{j,\mathup{ms}})+s(\pi(\mathbf{q}^i_j-\mathbf{q}^i_{j,\mathup{ms}}),\pi(\mathbf{q}^i_j-\mathbf{q}^i_{j,\mathup{ms}}))\\
&=(\mathcal{A}(\bm{\underline{\psi}}^i_j-\bm{\underline{\psi}}^i_{j,\mathup{ms}}),\bm{\underline{\psi}}^i_j-\underline{\bm{\tau}}_h)+(\mathcal{A}(\bm{\underline{\psi}}^i_j-\bm{\underline{\psi}}^i_{j,\mathup{ms}}),\underline{\bm{\tau}}_h-\bm{\underline{\psi}}^i_{j,\mathup{ms}})\\
&\quad+s(\pi(\mathbf{q}^i_j-\mathbf{q}^i_{j,\mathup{ms}}),\pi(\mathbf{q}^i_j-\mathbf{v}_h))+s(\pi(\mathbf{q}^i_j-\mathbf{q}^i_{j,\mathup{ms}}),\pi(\mathbf{v}_h-\mathbf{q}^i_{j,\mathup{ms}}))\\
&=(\mathcal{A}(\bm{\underline{\psi}}^i_j-\bm{\underline{\psi}}^i_{j,\mathup{ms}}),\bm{\underline{\psi}}^i_j-\underline{\bm{\tau}}_h)+s(\pi(\mathbf{q}^i_j-\mathbf{q}^i_{j,\mathup{ms}}),\pi(\mathbf{q}^i_j-\mathbf{v}_h))\\
&\quad-(\textbf{div}(\underline{\bm{\tau}}_h-\bm{\underline{\psi}}^i_{j,\mathup{ms}}),\mathbf{q}^i_j-\mathbf{q}^i_{j,\mathup{ms}})+(\textbf{div}(\bm{\underline{\psi}}^i_j-\bm{\underline{\psi}}^i_{j,\mathup{ms}}),\mathbf{v}_h-\mathbf{q}^i_{j,\mathup{ms}}).
\end{align*}
Note that the first two terms on the right hand side of above equality can be estimated easily. For the other two terms, we observe that
\begin{equation}
\begin{aligned}
\label{two terms}
&\quad-(\textbf{div}(\underline{\bm{\tau}}_h-\bm{\underline{\psi}}^i_{j,\mathup{ms}}),\mathbf{q}^i_j-\mathbf{q}^i_{j,\mathup{ms}})+(\textbf{div}(\bm{\underline{\psi}}^i_j-\bm{\underline{\psi}}^i_{j,\mathup{ms}}),\mathbf{v}_h-\mathbf{q}^i_{j,\mathup{ms}})\\
&=-(\textbf{div}(\underline{\bm{\tau}}_h-\bm{\underline{\psi}}^i_j),\mathbf{q}^i_j-\mathbf{q}^i_{j,\mathup{ms}})-(\textbf{div}(\bm{\underline{\psi}}^i_j-\bm{\underline{\psi}}^i_{j,\mathup{ms}}),\mathbf{q}^i_j-\mathbf{q}^i_{j,\mathup{ms}})\\
&\quad +(\textbf{div}(\bm{\underline{\psi}}^i_j-\bm{\underline{\psi}}^i_{j,\mathup{ms}}),\mathbf{v}_h-\mathbf{q}^i_j)+(\textbf{div}(\bm{\underline{\psi}}^i_j-\bm{\underline{\psi}}^i_{j,\mathup{ms}}),\mathbf{q}^i_j-\mathbf{q}^i_{j,\mathup{ms}})\\
&=-(\textbf{div}(\underline{\bm{\tau}}_h-\bm{\underline{\psi}}^i_j),\mathbf{q}^i_j-\mathbf{q}^i_{j,\mathup{ms}})+(\textbf{div}(\bm{\underline{\psi}}^i_j-\bm{\underline{\psi}}^i_{j,\mathup{ms}}),\mathbf{v}_h-\mathbf{q}^i_j).
\end{aligned}
\end{equation}
We will estimate the two terms on the right hand side of (\ref{two terms}). For the first term on the right hand side of (\ref{two terms}), by using the Cauchy-Schwarz inequality and the triangle inequality,
\[
(\textbf{div}(\underline{\bm{\tau}}_h-\bm{\underline{\psi}}^i_j),\mathbf{q}^i_j-\mathbf{q}^i_{j,\mathup{ms}})\leq \norm{\underline{\bm{\tau}}_h-\bm{\underline{\psi}}^i_j}_\Sigma\cdot(\norm{\mathbf{q}^i_j-\mathbf{v}_h}_s+\norm{\mathbf{v}_h-\mathbf{q}^i_{j,\mathup{ms}}}_s).
\]
By (\ref{inf-sup for global basis}), there is $\underline{\bm{w}}_h\in\underline{\bm{\Sigma}}_h(K_i^+)$ such that
$
\norm{\mathbf{v}_h-\mathbf{q}^i_{j,\mathup{ms}}}_s^2\leq (1+\frac{1}{\Lambda})\frac{(\textbf{div}\underline{\bm{w}}_h,\mathbf{v}_h-\mathbf{q}^i_{j,\mathup{ms}})^2}{\norm{\underline{\bm{w}}_h}_\Sigma^2}+\norm{\pi(\mathbf{v}_h-\mathbf{q}^i_{j,\mathup{ms}})}_s^2.
$
Note that $\norm{\pi(\mathbf{v}_h-\mathbf{q}^i_{j,\mathup{ms}})}_s\leq\norm{\mathbf{v}_h-\mathbf{q}^i_j}_s+\norm{\pi(\mathbf{q}^i_j-\mathbf{q}^i_{j,\mathup{ms}})}_s$.
In addition, by (\ref{global-local_a}), we obtain
\begin{align*}
(\textbf{div}\underline{\bm{w}}_h,\mathbf{v}_h-\mathbf{q}^i_{j,\mathup{ms}})
=(\textbf{div}\underline{\bm{w}}_h,\mathbf{v}_h-\mathbf{q}^i_j)-(\mathcal{A}(\bm{\underline{\psi}}^i_j-\bm{\underline{\psi}}^i_{j,\mathup{ms}}),\underline{\bm{w}}_h).
\end{align*}
Thus we have
\begin{equation}
\begin{aligned}
\label{estimate for the first term}
&\quad(\textbf{div}(\underline{\bm{\tau}}_h-\bm{\underline{\psi}}^i_j),\mathbf{q}^i_j-\mathbf{q}^i_{j,\mathup{ms}})\leq \norm{\underline{\bm{\tau}}_h-\bm{\underline{\psi}}^i_j}_\Sigma\cdot\big(\norm{\mathbf{q}^i_j-\mathbf{v}_h}_s+(1+\frac{1}{\Lambda})^{\frac{1}{2}}(\norm{\mathbf{v}_h-\mathbf{q}^i_j}_s+\norm{\bm{\underline{\psi}}^i_j-\bm{\underline{\psi}}^i_{j,\mathup{ms}}}_\mathcal{A})\\
&+\norm{\pi(\mathbf{v}_h-\mathbf{q}^i_{j,\mathup{ms}})}_s\big)\leq C(1+\frac{1}{\Lambda})^{\frac{1}{2}}\norm{\underline{\bm{\tau}}_h-\bm{\underline{\psi}}^i_j}_\Sigma\cdot(\norm{\mathbf{q}^i_j-\mathbf{v}_h}_s+\norm{\bm{\underline{\psi}}^i_j-\bm{\underline{\psi}}^i_{j,\mathup{ms}}}_\Sigma+\norm{\pi(\mathbf{v}_h-\mathbf{q}^i_{j,\mathup{ms}})}_s)
\end{aligned}
\end{equation}
This gives an estimate for the first term on the right hand side of (\ref{two terms}). 

For the second term on the right hand side of  (\ref{two terms}), we have
\[
(\textbf{div}(\bm{\underline{\psi}}^i_j-\bm{\underline{\psi}}^i_{j,\mathup{ms}}),\mathbf{v}_h-\mathbf{q}^i_j)\leq \norm{\tilde{k}^{-\frac{1}{2}}\nabla\cdot(\bm{\underline{\psi}}^i_j-\bm{\underline{\psi}}^i_{j,\mathup{ms}})}_{L^2}\cdot\norm{\mathbf{v}_h-\mathbf{q}^i_j}_s.
\]
By (\ref{global-local_d}), let $\mathbf{v}_h=\tilde{k}^{-1}\nabla\cdot(\bm{\underline{\psi}}^i_j-\bm{\underline{\psi}}^i_{j,\mathup{ms}})$ (Note that (\ref{global-local_d}) also holds for all $\mathbf{v}_h\in\bm{\mathit{U}}_h$)
\begin{align*}
\norm{\tilde{k}^{-\frac{1}{2}}\nabla\cdot(\bm{\underline{\psi}}^i_j-\bm{\underline{\psi}}^i_{j,\mathup{ms}})}_{L^2}^2&=s(\pi(\mathbf{q}^i_j-\mathbf{q}^i_{j,\mathup{ms}}),\tilde{k}^{-1}\nabla\cdot(\bm{\underline{\psi}}^i_j-\bm{\underline{\psi}}^i_{j,\mathup{ms}}))\\
&\leq \norm{\pi(\mathbf{q}^i_j-\mathbf{q}^i_{j,\mathup{ms}})}_s\cdot\norm{\tilde{k}^{-\frac{1}{2}}\nabla\cdot(\bm{\underline{\psi}}^i_j-\bm{\underline{\psi}}^i_{j,\mathup{ms}})}_{L^2}.
\end{align*}
So we obtain that
$
(\textbf{div}(\bm{\underline{\psi}}^i_j-\bm{\underline{\psi}}^i_{j,\mathup{ms}}),\mathbf{v}_h-\mathbf{q}^i_j)\leq\norm{\pi(\mathbf{q}^i_j-\mathbf{q}^i_{j,\mathup{ms}})}_s\cdot\norm{\mathbf{v}_h-\mathbf{q}^i_j}_s.
$
And we have
\begin{align*}
&\quad \norm{\bm{\underline{\psi}}^i_j-\quad\bm{\underline{\psi}}^i_{j,\mathup{ms}}}_\Sigma^2=\norm{\bm{\underline{\psi}}^i_j-\bm{\underline{\psi}}^i_{j,\mathup{ms}}}_\mathcal{A}^2+\norm{\tilde{k}^{-\frac{1}{2}}\nabla\cdot(\bm{\underline{\psi}}^i_j-\bm{\underline{\psi}}^i_{j,\mathup{ms}})}_{L^2}^2\\
&\leq \norm{\bm{\underline{\psi}}^i_j-\bm{\underline{\psi}}^i_{j,\mathup{ms}}}_\mathcal{A}^2+\norm{\pi(\mathbf{q}^i_j-\mathbf{q}^i_{j,\mathup{ms}})}_s^2\leq \norm{\bm{\underline{\psi}}^i_j-\bm{\underline{\psi}}^i_{j,\mathup{ms}}}_\mathcal{A}^2+\norm{\mathbf{q}^i_j-\mathbf{q}^i_{j,\mathup{ms}}}_s^2.
\end{align*}
Combining above estimates, we get the desired results by applying the Cauchy-Schwarz inequality.
\end{proof}
\subsection{Decay property of global basis functions}\label{Decay property of global basis functions}
In this section, we will prove a decay property of the global basis functions $\underline{\bm{\psi}}^i_j$ and $\mathbf{q}^i_j$. Let $K_i$ be a given coarse element. We define $K_{i,m}$ as the oversampling coarse neighborhood of enlarging $K_i$ by $m$ coarse grid layer. For $M>m$, we define $\chi^{M,m}_i\in\mathup{span}\{\chi_i\}$ (where $\{\chi_i\}$ is the set of standard multiscale basis functions, which satisfy the partition of unity property) such that
\begin{align*}
\chi^{M,m}_i&=1\quad\mathup{in}\quad K_{i,m},\\
\chi^{M,m}_i&=0\quad\mathup{in}\quad \Omega\backslash K_{i,M}.
\end{align*}
We notice that $\abs{\nabla\chi^{M,m}_i}^2\leq CH^{-2}$. In the following lemma, we will prove the decay property using $K_i^+=K_{i,l}$, that is, the basis functions are constructed in a region which is a $l$ coarse-grid layer extension of $K_i$, with $l\geq2$. For simplicity, we assume $\mathcal{A}$ is a piecewise constant. In addition, we define a $L^2$ projection operator  $\underline{\bm{P}}$ to $\underline{\bm{\mathit{\Sigma}}}_h$ as follows:
\[
\underline{\bm{P}}\colon\underline{\bm{L}}^2(\Omega)\to
\underline{\bm{\mathit{\Sigma}}}_h.
\]
Then we clearly have
\begin{equation}
\label{L2 projection to discrete stress space}
\norm{\underline{\bm{P}}\underline{v}-\underline{v}}_{L^2}\leq C\norm{\underline{v}}_{L^2},\quad
\norm{\underline{\bm{P}}\underline{v}}_{L^2}\leq C\norm{\underline{v}}_{L^2},
\end{equation}
where $C$ is independent of $h$.
\begin{lemma}
\label{decay property of basis function_2}
Let $(\bm{\underline{\psi}}^i_j,\mathbf{q}^i_j)\in\underline{\bm{\mathit{\Sigma}}}_h\times\bm{\mathit{U}}_h$ be the solution of (\ref{global multiscale basis}) and let $(\bm{\underline{\psi}}^i_{j,\mathup{ms}},\mathbf{q}^i_{j,ms})\in\underline{\bm{\mathit{\Sigma}}}_h(K_i)\times\bm{\mathit{U}}_h(K_i)$ be the solution of (\ref{multiscale basis}). For $K_{i,l}$ with $l\geq2$, we have
\[
\norm{\bm{\underline{\psi}}^i_j-\bm{\underline{\psi}}^i_{j,\mathup{ms}}}_\Sigma^2+\norm{\mathbf{q}^i_j-\mathbf{q}^i_{j,ms}}_s^2\leq C_\#\norm{\mathbf{p}^i_j}_s^2,
\]
where $C_\#=C(1+\frac{1}{\Lambda})(1+C^{-1}(1+\frac{1}{\Lambda})^{-\frac{1}{2}})^{1-l}$.
\end{lemma}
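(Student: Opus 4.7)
The strategy is a standard iterated-cutoff argument combined with the quasi-optimality estimate of Lemma~\ref{decay_property_1}. The goal is to establish a geometric recursion for the truncated energies
\[
E(m) \coloneqq \norm{\underline{\bm{\psi}}^i_j}_{\Sigma(\Omega\setminus K_{i,m})}^2 + \norm{\mathbf{q}^i_j}_{s(\Omega\setminus K_{i,m})}^2,
\]
of the form $E(m) \leq \rho\,E(m-1)$ with contraction factor $\rho = (1+C^{-1}(1+1/\Lambda)^{-1/2})^{-1}<1$, so that iterating this bound $l-1$ times from $m=l-1$ down to $m=0$ produces the displayed exponent $1-l$. Applying Lemma~\ref{decay_property_1} then converts the truncated energy at level $l-1$ into a bound on the full error between $(\bm{\underline{\psi}}^i_j,\mathbf{q}^i_j)$ and $(\bm{\underline{\psi}}^i_{j,\mathup{ms}},\mathbf{q}^i_{j,\mathup{ms}})$.

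\textbf{Step 1: Choice of test pair.} First I would invoke Lemma~\ref{decay_property_1} with a specifically chosen $(\underline{\bm{\tau}}_h,\mathbf{v}_h)\in\underline{\bm{\Sigma}}_h(K_{i,l})\times\bm{\mathit{U}}_h(K_{i,l})$ built from the partition-of-unity cutoff $\chi^{l,l-1}_i$. A natural candidate is $\mathbf{v}_h = \chi^{l,l-1}_i\mathbf{q}^i_j$ and $\underline{\bm{\tau}}_h = \underline{\bm{P}}(\chi^{l,l-1}_i\bm{\underline{\psi}}^i_j)$, where $\underline{\bm{P}}$ is the $L^2$-projection into $\underline{\bm{\Sigma}}_h$ from \eqref{L2 projection to discrete stress space}; this restores conformity of the cut-off stress while preserving the zero boundary condition on $\partial K_{i,l}$. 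Since $\chi^{l,l-1}_i\equiv 1$ on $K_{i,l-1}$, the differences $\bm{\underline{\psi}}^i_j-\underline{\bm{\tau}}_h$ and $\mathbf{q}^i_j-\mathbf{v}_h$ are supported in $\Omega\setminus K_{i,l-1}$, which allows me to bound the right-hand side of Lemma~\ref{decay_property_1} by $E(l-1)$ plus commutator contributions from $\nabla\chi^{l,l-1}_i$. Because $|\nabla\chi^{l,l-1}_i|^2\leq CH^{-2}$ matches exactly the weight $\tilde{k}=kH^{-2}$ in the $s$-norm, these commutator terms are absorbed into the local $s$-norm of $\mathbf{q}^i_j$ on the same annular region.

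\textbf{Step 2: Geometric contraction.} To establish $E(m)\leq \rho\,E(m-1)$, I would test the global system \eqref{global multiscale basis} defining $(\bm{\underline{\psi}}^i_j,\mathbf{q}^i_j)$ against the pair $((1-\chi^{m,m-1}_i)\text{-cutoffs})$, using that the source $s(\mathbf{p}^i_j,\cdot)$ is supported in $K_i\subset K_{i,m-1}$ and therefore annihilates test functions supported away from $K_{i,m-1}$. Applying Cauchy–Schwarz and the inf-sup type bound \eqref{inf-sup for global basis} on the annulus $K_{i,m}\setminus K_{i,m-1}$ yields an inequality of the form $E(m)\leq C(1+1/\Lambda)^{1/2}(E(m-1)-E(m))^{1/2}E(m)^{1/2}$, which rearranges into $E(m)\leq \rho\,E(m-1)$ with the claimed rate. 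Iterating from $m=l-1$ down to $m=1$, and using the global energy bound $E(0)\leq C\norm{\mathbf{p}^i_j}_s^2$ obtained by testing \eqref{global multiscale basis} against $(\bm{\underline{\psi}}^i_j,-\mathbf{q}^i_j)$ and invoking $-(\textbf{div}\bm{\underline{\psi}}^i_j,\mathbf{q}^i_j)=s(\mathbf{p}^i_j,\mathbf{q}^i_j)-\norm{\pi\mathbf{q}^i_j}_s^2$, combine to give $E(l-1)\leq C\rho^{l-1}\norm{\mathbf{p}^i_j}_s^2$. Feeding this into Step~1 produces the factor $C(1+1/\Lambda)\rho^{l-1}$ in front of $\norm{\mathbf{p}^i_j}_s^2$, which is exactly $C_\#$.

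\textbf{Main obstacle.} The delicate point is maintaining conformity in $\underline{\bm{\Sigma}}_h$ under cutoff multiplication: one cannot simply multiply a discrete stress by a scalar $\chi$ and remain in the strongly symmetric mixed finite element space with the prescribed homogeneous traction trace. Introducing $\underline{\bm{P}}$ resolves conformity but creates a projection error that must be controlled locally, and moreover the divergence constraint \eqref{global multiscale basis_d} couples $\textbf{div}\,\underline{\bm{\tau}}_h$ to $\pi\mathbf{v}_h$ through the weighted inner product $s(\cdot,\cdot)$. Carefully accounting for these coupling and commutator terms, and showing that each annular-layer estimate contributes an independent factor $(1+C^{-1}(1+1/\Lambda)^{-1/2})^{-1}$ whose product telescopes into the geometric factor, is where the bulk of the technical work lies; the rate is \emph{robust in $\lambda$} because the dependence enters only through $\Lambda$ via the bilinear form $s$ and the spectral cutoff, both of which are contrast-independent by construction.
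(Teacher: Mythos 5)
Your plan is correct and follows essentially the same route as the paper's proof: a cutoff test pair $\bigl(\underline{\bm{P}}(\chi^{l,l-1}_i\underline{\bm{\psi}}^i_j),\bm{\mathit{P}}(\chi^{l,l-1}_i\mathbf{q}^i_j)\bigr)$ fed into Lemma~\ref{decay_property_1} to reduce to the tail energy, then a layer-by-layer annulus estimate obtained by testing \cref{global multiscale basis} with $1-\chi^{m,m-1}_i$ cutoffs (using that $\mathbf{p}^i_j$ is supported in $K_i$) to get the geometric factor $(1+C^{-1}(1+1/\Lambda)^{-1/2})^{1-l}$, and finally the global bound by $\norm{\mathbf{p}^i_j}_s^2$. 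The only cosmetic difference is that the paper runs the recursion on $\norm{\underline{\bm{\psi}}^i_j}_{\mathcal{A}(\Omega\setminus K_{i,m})}^2+\norm{\pi\mathbf{q}^i_j}_{s(\Omega\setminus K_{i,m})}^2$ (using your inequality \cref{inf-sup for global basis} in a separate step to recover the full $s$-norm of $\mathbf{q}^i_j$), and projects $\chi^{l,l-1}_i\mathbf{q}^i_j$ by $\bm{\mathit{P}}$ to stay in $\bm{\mathit{U}}_h$, which you omit for the displacement component.
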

\begin{proof}
By lemma \ref{decay_property_1}, 
\begin{equation}
\label{decay inequality}
\norm{\bm{\underline{\psi}}^i_j-\bm{\underline{\psi}}^i_{j,\mathup{ms}}}_\Sigma^2+\norm{\mathbf{q}^i_j-\mathbf{q}^i_{j,ms}}_s^2\leq C(1+\frac{1}{\Lambda})(\norm{\bm{\underline{\psi}}^i_j-\underline{\bm{\tau}}_h}_\Sigma^2+\norm{\mathbf{q}^i_j-\mathbf{v}_h}_s^2)
\end{equation}
for all $(\underline{\bm{\tau}}_h,\mathbf{v}_h)\in\underline{\bm{\mathit{\Sigma}}}_h(K_i^+)\times\bm{\mathit{U}}_h(K_i^+)$. Next, we will choose $\underline{\bm{\tau}}_h$ and $\mathbf{v}_h$ in order to obtain the required result. We define
\[
\underline{\bm{\tau}}_h=\underline{\bm{P}}(\chi^{l,l-1}_i\underline{\bm{\psi}}^i_j),\quad \mathbf{v}_h=\bm{\mathit{P}}(\chi^{l,l-1}_i\mathbf{q}^i_j).
\]
Then we see that $\underline{\bm{\tau}}_h\in\underline{\bm{\Sigma}}_h(K_{i,l})$ and $\mathbf{v}_h\in \bm{\mathit{U}}_h(K_{i,l})$. 

\paragraph{\textbf{Step 1:}} 
In the first step, we will show that 
\[
\norm{\bm{\underline{\psi}}^i_j-\bm{\underline{\psi}}^i_{j,\mathup{ms}}}_\Sigma^2+\norm{\mathbf{q}^i_j-\mathbf{q}^i_{j,ms}}_s^2\leq C(\norm{\bm{\underline{\psi}}^i_j}_{\mathcal{A}(\Omega\backslash K_{i,l-1})}^2+\norm{\mathbf{q}^i_j}_{s(\Omega\backslash K_{i,l-1})}^2),
\]
where $C$ is independent of mesh sizes and the Lame constant $\lambda$. By (\ref{decay inequality}), we need to estimate $\norm{\bm{\underline{\psi}}^i_j-\underline{\bm{\tau}}_h}_\Sigma^2$ and $\norm{\mathbf{q}^i_j-\mathbf{v}_h}_s^2$. By the properties of $\chi^{l,l-1}_i$, the interpolation operators $\bm{\mathit{P}}$ and $\underline{\bm{P}}$ , we see that
\begin{align*}
&\quad\norm{\mathbf{q}^i_j-\mathbf{v}_h}_s=\norm{\mathbf{q}^i_j-\bm{\mathit{P}}(\chi^{l,l-1}_i\mathbf{q}^i_j)}_s=\norm{\bm{\mathit{P}}\big((1-\chi^{l,l-1}_i)\mathbf{q}^i_j\big)}_{s(\Omega\backslash K_{i,l-1})}\\
&=\norm{\tilde{k}^{\frac{1}{2}}\bm{\mathit{P}}\big((1-\chi^{l,l-1}_i)\mathbf{q}^i_j\big)}_{L^2(\Omega\backslash K_{i,l-1})}=\norm{\bm{\mathit{P}}\big(\tilde{k}^{\frac{1}{2}}(1-\chi^{l,l-1}_i)\mathbf{q}^i_j\big)}_{L^2(\Omega\backslash K_{i,l-1})}\\
&\leq \norm{\tilde{k}^{\frac{1}{2}}(1-\chi^{l,l-1}_i)\mathbf{q}^i_j}_{L^2(\Omega\backslash K_{i,l-1})}= \norm{(1-\chi^{l,l-1}_i)\mathbf{q}^i_j}_{s(\Omega\backslash K_{i,l-1})}\leq \norm{\mathbf{q}^i_j}_{s(\Omega\backslash K_{i,l-1})}
\end{align*}
in terms of (\ref{ll}). Similarly, by (\ref{L2 projection to discrete stress space}), we have
\begin{align*}
\norm{\bm{\underline{\psi}}^i_j-\underline{\bm{\tau}}_h}_\mathcal{A}\leq \norm{\underline{\bm{\psi}}^i_j-\chi^{l,l-1}_i\underline{\bm{\psi}}^i_j}_\mathcal{A}+\norm{\chi^{l,l-1}_i\underline{\bm{\psi}}^i_j-\underline{\bm{P}}(\chi^{l,l-1}_i\underline{\bm{\psi}}^i_j)}_{\mathcal{A}(\Omega\backslash K_{i,l-1})}\leq C\norm{\underline{\bm{\psi}}^i_j}_{\mathcal{A}(\Omega\backslash K_{i,l-1})}.
\end{align*}
By the inverse inequality, we obtain
\begin{align*}
&\quad\norm{\tilde{k}^{-\frac{1}{2}}\nabla\cdot(\bm{\underline{\psi}}^i_j-\bm{\underline{\tau}}_h)}_{L^2}=\norm{\tilde{k}^{-\frac{1}{2}}\nabla\cdot\big(\bm{\underline{\psi}}^i_j-\underline{\bm{P}}(\chi^{l,l-1}_i\underline{\bm{\psi}}^i_j)\big)}_{L^2}=\norm{\tilde{k}^{-\frac{1}{2}}\nabla\cdot\big(\underline{\bm{P}}(1-\chi^{l,l-1}_i)\underline{\bm{\psi}}^i_j\big)}_{L^2}\\
&\leq CH^{-1}\norm{\tilde{k}^{-\frac{1}{2}}\underline{\bm{P}}(1-\chi^{l,l-1}_i)\underline{\bm{\psi}}^i_j}_{L^2}\leq CH^{-1}\norm{\tilde{k}^{-\frac{1}{2}}(1-\chi^{l,l-1}_i)\underline{\bm{\psi}}^i_j}_{L^2}\leq C\norm{\underline{\bm{\psi}}^i_j}_{\mathcal{A}(\Omega\backslash K_{i,l-1})}
\end{align*}
since $\tilde{k}^{-\frac{1}{2}}=k^{-\frac{1}{2}}H=1/\sqrt{\lambda+2\mu}H$, and $\norm{\frac{1}{\sqrt{\lambda+2\mu}}\underline{\bm{\psi}}^i_j}_{L^2(\Omega\backslash K_{i,l-1)}}^2\leq\norm{\underline{\bm{\psi}}^i_j}_{\mathcal{A}(\Omega\backslash K_{i,l-1)}}^2$. Here $C$ is independent of mesh sizes and the Lame constant $\lambda$. This completes the proof of Step 1.

\paragraph{\textbf{Step 2:}} In the second step, we will show that
\[
\norm{\bm{\underline{\psi}}^i_j}_{\mathcal{A}(\Omega\backslash K_{i,l-1})}^2+\norm{\mathbf{q}^i_j}_{s(\Omega\backslash K_{i,l-1})}^2\leq C(1+\frac{1}{\Lambda})(\norm{\underline{\bm{\psi}}^i_j}_{\mathcal{A}(\Omega\backslash K_{i,l-1})}^2+\norm{\pi\mathbf{q}^i_j}_{s(\Omega\backslash K_{i,l-1})}^2).
\]
We note that $\norm{\mathbf{q}^i_j}_{s(\Omega\backslash K_{i,l-1})}^2=\norm{(I-\pi)\mathbf{q}^i_j}_{s(\Omega\backslash K_{i,l-1})}^2+\norm{\pi\mathbf{q}^i_j}_{s(\Omega\backslash K_{i,l-1})}^2$. For each coarse element $K_m\subset\Omega\backslash K_{i,l-1}$, we let $\mathbf{r}_m$ be the restriction of $(I-\pi)\mathbf{q}^i_j$ on $K_m$. Then we can write $\mathbf{r}_m=\sum_{n=l_m+1}^{L_m}d^m_n\mathbf{p}^m_n$. we define $\underline{\bm{w}}_m=\sum_{n=l_m+1}^{L_m}\lambda^m_nd^m_n\underline{\bm{\phi}}^m_n$. Note that by the local spectral problem (\ref{local spectral problem}), we have $\nabla\cdot\underline{\bm{\phi}}^m_n=-\tilde{k}\lambda^m_n\mathbf{p}^m_n$. Thus, by the orthogonality of eigenfunctions, we obtain that
\begin{align*}
\norm{(I-\pi)\mathbf{q}^i_j}_{s_m}^2=\sum_{n=l_m+1}^{L_m}d^m_ns_m(\mathbf{p}^m_n,\mathbf{z}_m)=-\sum_{n=l_m+1}^{L_m}d^m_n\frac{1}{\lambda^m_n}(\textbf{div}\underline{\bm{\phi}}^m_n,\mathbf{z}_m)=-\int_{K_m}(\textbf{div}\underline{\bm{w}}_m)\mathbf{z}_mdx,
\end{align*}
where $\mathbf{z}_m$ is the restriction of $\mathbf{q}^i_j$ on $K_m$. Summing the above over all $K_m\subset\Omega\backslash K_{i,l-1}$, we have (let $\underline{\bm{w}}=\sum_{K_m\subset(\Omega\backslash K_{i,l-1})}\underline{\bm{w}}_m\in\underline{\bm{\Sigma}}_h(\Omega\backslash K_{i,l-1})$.)
\begin{align*}
\norm{(I-\pi)\mathbf{q}^i_j}_{s(\Omega\backslash K_{i,l-1})}^2&=\int_{\Omega\backslash K_{i,l-1}}\mathbf{q}^i_j\nabla\cdot\underline{\bm{w}}=-(\mathcal{A}\underline{\bm{\psi}}^i_j,\underline{\bm{w}})\leq \norm{\underline{\bm{\psi}}^i_j}_{\mathcal{A}(\Omega\backslash K_{i,l-1})}\cdot\norm{\underline{\bm{w}}}_{\mathcal{A}(\Omega\backslash K_{i,l-1})}.
\end{align*}
For any $K_m\subset\Omega\backslash K_{i,l-1}$, using (\ref{local spectral problem}), we obtain
$
\int_{K_m}(\mathcal{A}\underline{\bm{w}})\underline{\bm{w}}dx=\sum_{n=l_m+1}^{L_m}(\lambda^m_n)^{-1}(d^m_n)^2\leq \frac{1}{\Lambda}\norm{\mathbf{r}_m}_{s_m}^2.
$
That is, we have
$
\norm{(I-\pi)\mathbf{q}^i_j}_{s_m}\leq \frac{1}{\sqrt{\Lambda}}\norm{\underline{\bm{\psi}}^i_j}_{\mathcal{A}(\Omega\backslash K_{i,l-1})}.
$
This completes the proof of Step 2.

\paragraph{\textbf{Step 3:}} In terms of Step 2, we know for all $l\geq2$
\begin{equation}
\label{l l-1 estimate}
\norm{\bm{\underline{\psi}}^i_j}_{\mathcal{A}(K_{i,l-1}\backslash K_{i,l-2})}^2+\norm{\mathbf{q}^i_j}_{s(K_{i,l-1}\backslash K_{i,l-2})}^2\leq C(1+\frac{1}{\Lambda})(\norm{\underline{\bm{\psi}}^i_j}_{\mathcal{A}(K_{i,l-1}\backslash K_{i,l-2})}^2+\norm{\pi\mathbf{q}^i_j}_{s(K_{i,l-1}\backslash K_{i,l-2})}^2).
\end{equation}
In this step, we will prove that 
\begin{align*}
\norm{\underline{\bm{\psi}}^i_j}_{\mathcal{A}(\Omega\backslash K_{i,l-1})}^2+\norm{\pi\mathbf{q}^i_j}_{s(\Omega\backslash K_{i,l-1})}^2\leq C(1+\frac{1}{\Lambda})^{\frac{1}{2}}(\norm{\underline{\bm{\psi}}^i_j}_{\mathcal{A}(K_{i,l-1}\backslash K_{i,l-2})}^2+\norm{\pi\mathbf{q}^i_j}_{s(K_{i,l-1}\backslash K_{i,l-2})}^2).
\end{align*}
Let $\xi=1-\chi^{l-1,l-2}_i$, then utilizing (\ref{global multiscale basis}) and combining (\ref{commuting diagram})-(\ref{jj}),
\begin{align*}
&\quad\norm{\underline{\bm{\psi}}^i_j}_{\mathcal{A}(\Omega\backslash K_{i,l-1})}^2=(\mathcal{A}\underline{\bm{\psi}}^i_j,\underline{\bm{\psi}}^i_j)_{\Omega\backslash K_{i,l-1}}=(\mathcal{A}\underline{\bm{\psi}}^i_j,\underline{\bm{P}}(\xi\underline{\bm{\psi}}^i_j))-(\mathcal{A}\underline{\bm{\psi}}^i_j,\underline{\bm{P}}(\xi\underline{\bm{\psi}}^i_j))_{K_{i,l-1}\backslash K_{i,l-2}}\\
&=-(\textbf{div}\underline{\bm{P}}(\xi\underline{\bm{\psi}}^i_j),\mathbf{q}^i_j)-(\mathcal{A}\underline{\bm{\psi}}^i_j,\underline{\bm{P}}(\xi\underline{\bm{\psi}}^i_j)-\xi\underline{\bm{\psi}}^i_j)_{K_{i,l-1}\backslash K_{i,l-2}}-(\mathcal{A}\underline{\bm{\psi}}^i_j,\xi\underline{\bm{\psi}}^i_j)_{K_{i,l-1}\backslash K_{i,l-2}}\\
&\leq -(\textbf{div}\underline{\bm{\psi}}^i_j,\mathbf{q}^i_j)_{\Omega\backslash K_{i,l-1}}-(\textbf{div}\underline{\bm{P}}(\xi\underline{\bm{\psi}}^i_j),\mathbf{q}^i_j)_{K_{i,l-1}\backslash K_{i,l-2}}+C\norm{\underline{\bm{\psi}}^i_j}_{\mathcal{A}(K_{i,l-1}\backslash K_{i,l-2})}^2.
\end{align*}
Besides, by using (\ref{global multiscale basis_d}), 
\begin{align*}
&\quad\norm{\pi(\mathbf{q}^i_j)}_{s(\Omega\backslash K_{i,l-1})}^2=s(\pi(\mathbf{q}^i_j),\pi(\xi\mathbf{q}^i_j))-\int_{K_{i,l-1\backslash K_{i,l-2}}}\tilde{k}\pi(\mathbf{q}^i_j)\cdot\pi(\xi\mathbf{q}^i_j)dx\\
&=(\textbf{div}\underline{\bm{\psi}}^i_j,\xi\mathbf{q}^i_j)-s(\mathbf{p}^i_j,\xi\mathbf{q}^i_j)-\int_{K_{i,l-1\backslash K_{i,l-2}}}\tilde{k}\pi(\mathbf{q}^i_j)\cdot\pi(\xi\mathbf{q}^i_j)dx\\
&=(\textbf{div}\underline{\bm{\psi}}^i_j,\mathbf{q}^i_j)_{\Omega\backslash K_{i,l-1}}+(\textbf{div}\underline{\bm{\psi}}^i_j,\xi\mathbf{q}^i_j)_{K_{i,l-1\backslash K_{i,l-2}}}-\int_{K_{i,l-1\backslash K_{i,l-2}}}\tilde{k}\pi(\mathbf{q}^i_j)\cdot\pi(\xi\mathbf{q}^i_j)dx
\end{align*}
since $\mathup{supp}(\mathbf{p}^i_j)\subset K_i$ and $\mathup{supp}(\xi\mathbf{q}^i_j)\subset \Omega\backslash K_{i,l-2}$. Adding above two equations and combining (\ref{l l-1 estimate}), also the inverse inequality, we have
\begin{align*}
&\quad\norm{\underline{\bm{\psi}}^i_j}_{\mathcal{A}(\Omega\backslash K_{i,l-1})}^2+\norm{\pi(\mathbf{q}^i_j)}_{s(\Omega\backslash K_{i,l-1})}^2\\
&\leq -(\textbf{div}\underline{\bm{P}}(\xi\underline{\bm{\psi}}^i_j),\mathbf{q}^i_j)_{K_{i,l-1}\backslash K_{i,l-2}}+(\textbf{div}\underline{\bm{\psi}}^i_j,\xi\mathbf{q}^i_j)_{K_{i,l-1\backslash K_{i,l-2}}}+C\norm{\underline{\bm{\psi}}^i_j}_{\mathcal{A}(K_{i,l-1}\backslash K_{i,l-2})}^2\\
&\qquad-\int_{K_{i,l-1\backslash K_{i,l-2}}}\tilde{k}\pi(\mathbf{q}^i_j)\cdot\pi(\xi\mathbf{q}^i_j)dx\\
&\leq C(\norm{\underline{\bm{\psi}}^i_j}_{\mathcal{A}(K_{i,l-1}\backslash K_{i,l-2})}+\norm{\pi(\mathbf{q}^i_j)}_{s(K_{i,l-1}\backslash K_{i,l-2})})\norm{\mathbf{q}^i_j}_{s(K_{i,l-1}\backslash K_{i,l-2})}+C\norm{\underline{\bm{\psi}}^i_j}_{\mathcal{A}(K_{i,l-1}\backslash K_{i,l-2})}^2\\
&\leq C(1+\frac{1}{\Lambda})^{\frac{1}{2}}(\norm{\underline{\bm{\psi}}^i_j}_{\mathcal{A}(K_{i,l-1}\backslash K_{i,l-2})}^2+\norm{\pi(\mathbf{q}^i_j)}_{s(K_{i,l-1}\backslash K_{i,l-2})}^2).
\end{align*}

\paragraph{\textbf{Step 4:}} In the final step, we will prove the desired estimate. Note that
\begin{align*}
&\quad\norm{\underline{\bm{\psi}}^i_j}_{\mathcal{A}(\Omega\backslash K_{i,l-2})}^2+\norm{\pi(\mathbf{q}^i_j)}_{s(\Omega\backslash K_{i,l-2})}\\
&=\norm{\underline{\bm{\psi}}^i_j}_{\mathcal{A}(\Omega\backslash K_{i,l-1})}^2+\norm{\pi(\mathbf{q}^i_j)}_{s(\Omega\backslash K_{i,l-1})}+\norm{\underline{\bm{\psi}}^i_j}_{\mathcal{A}( K_{i,l-1} \backslash K_{i,l-2})}^2+\norm{\pi(\mathbf{q}^i_j)}_{s( K_{i,l-1}\backslash K_{i,l-2})}\\
&\geq (1+C^{-1}(1+\frac{1}{\Lambda})^{-\frac{1}{2}})(\norm{\underline{\bm{\psi}}^i_j}_{\mathcal{A}(\Omega\backslash K_{i,l-1})}^2+\norm{\pi(\mathbf{q}^i_j)}_{s(\Omega\backslash K_{i,l-1})}).
\end{align*}
Using the above inequality, recursion and the construction of global basis functions (\ref{global multiscale basis}), we obtain
\begin{align*}
\norm{\underline{\bm{\psi}}^i_j}_{\mathcal{A}(\Omega\backslash K_{i,l-1})}^2+\norm{\pi(\mathbf{q}^i_j)}_{s(\Omega\backslash K_{i,l-1})}&\leq (1+C^{-1}(1+\frac{1}{\Lambda})^{-\frac{1}{2}})^{1-l}(\norm{\underline{\bm{\psi}}^i_j}_{\mathcal{A}(\Omega\backslash K_{i,l-1})}^2+\norm{\pi(\mathbf{q}^i_j)}_{s(\Omega\backslash K_{i,l-1})})\\
&\leq C(1+C^{-1}(1+\frac{1}{\Lambda})^{-\frac{1}{2}})^{1-l}\norm{\mathbf{p}^i_j}_s^2.
\end{align*}
Combining Step 1, we obtain the desired estimate.
\end{proof}
\subsection{Stability and convergence of using multiscale basis functions} \label{final stability and convergence}
In this section, we prove the stability and convergence for the multiscale method (\ref{multiscale solution}). We begin with the following inf-sup stability.
\begin{lemma}
\label{Stability of using multiscale basis functions}
For any $\mathbf{q}_0\in U_\mathup{aux}$ with $s(\mathbf{q}_0,\mathbf{x})=0$ (where $\mathbf{x}\in\text{span}\{(1,0)^t,(0,1)^t\}$ in two dimension while in three dimension, $\mathbf{x}\in\text{span}\{(1,0,0)^t,(0,1,0)^t,(0,0,1)^t\}$), there is $\underline{\bm{\sigma}}_0\in \Sigma_\mathup{ms}$ such that
\[
\norm{\mathbf{q}_0}_s\leq C_\mathup{ms}\frac{(\textbf{\textup{div}}\underline{\bm{\sigma}}_0,\mathbf{q}_0)}{\norm{\underline{\bm{\sigma}}_0}_\Sigma},
\]
where $C_\mathup{ms}>0$ is a constant independent of mesh sizes $h,H$ and the Lam$\rm \acute{e}$ constant $\lambda$.
\end{lemma}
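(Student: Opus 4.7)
The plan is to bootstrap the multiscale inf-sup from the global inf-sup (Lemma~\ref{inf-sup for global variational form}) by replacing each global basis function with its localized counterpart and controlling the perturbation via the decay estimate from Lemma~\ref{decay property of basis function_2}. Concretely, given $\mathbf{q}_0\in U_\mathup{aux}$ with $s(\mathbf{q}_0,\mathbf{x})=0$ for rigid-body modes, I first expand $\mathbf{q}_0=\sum_{i,j}\beta^i_j\mathbf{p}^i_j$ in the $s$-orthonormal basis, so $\norm{\mathbf{q}_0}_s^2=\sum_{i,j}(\beta^i_j)^2$. By the linearity of problem~\eqref{variational form for inf-sup} and the characterization in Lemma~\ref{characterization of global basis functions}, the witness produced by Lemma~\ref{inf-sup for global variational form} is exactly $\underline{\bm{\beta}}=-\sum_{i,j}\beta^i_j\underline{\bm{\psi}}^i_j\in\Sigma_\mathup{glo}$, satisfying $(\textbf{div}\underline{\bm{\beta}},\mathbf{q}_0)=\norm{\mathbf{q}_0}_s^2$ and $\norm{\underline{\bm{\beta}}}_{\mathcal{A},\textbf{div}}\le C_g\norm{\mathbf{q}_0}_s$. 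The natural candidate in $\Sigma_\mathup{ms}$ is then $\underline{\bm{\sigma}}_0:=-\sum_{i,j}\beta^i_j\underline{\bm{\psi}}^i_{j,\mathup{ms}}$.

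Next, I would split $(\textbf{div}\underline{\bm{\sigma}}_0,\mathbf{q}_0)=(\textbf{div}\underline{\bm{\beta}},\mathbf{q}_0)+(\textbf{div}(\underline{\bm{\sigma}}_0-\underline{\bm{\beta}}),\mathbf{q}_0)=\norm{\mathbf{q}_0}_s^2+(\textbf{div}(\underline{\bm{\sigma}}_0-\underline{\bm{\beta}}),\mathbf{q}_0)$, and estimate the perturbation by Cauchy--Schwarz with the $\Sigma$-norm,
\[
\abs{(\textbf{div}(\underline{\bm{\sigma}}_0-\underline{\bm{\beta}}),\mathbf{q}_0)}
\le \norm{\tilde k^{-1/2}\textbf{div}(\underline{\bm{\sigma}}_0-\underline{\bm{\beta}})}_{L^2}\,\norm{\tilde k^{1/2}\mathbf{q}_0}_{L^2}
\le \norm{\underline{\bm{\sigma}}_0-\underline{\bm{\beta}}}_{\Sigma}\,\norm{\mathbf{q}_0}_s .
\]
To bound $\norm{\underline{\bm{\sigma}}_0-\underline{\bm{\beta}}}_\Sigma$, I invoke a finite-overlap (coloring) argument based on the fact that any point of $\Omega$ is contained in at most $N_\mathup{ov}=O(l^n)$ oversampling regions $K_{i,l}$, which lets me square-sum:
\[
\norm{\underline{\bm{\sigma}}_0-\underline{\bm{\beta}}}_\Sigma^2 \le C\,N_\mathup{ov}\sum_{i,j}(\beta^i_j)^2\,\norm{\underline{\bm{\psi}}^i_j-\underline{\bm{\psi}}^i_{j,\mathup{ms}}}_\Sigma^2
\le C\,N_\mathup{ov}\,C_\#\,\norm{\mathbf{q}_0}_s^2,
\]
using $\norm{\mathbf{p}^i_j}_s=1$ and Lemma~\ref{decay property of basis function_2}.

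Finally, since $C_\#$ decays exponentially in $l$ while $N_\mathup{ov}$ grows only polynomially, I would choose $l$ sufficiently large (a logarithmic function of $H$ and $\Lambda$) so that $CN_\mathup{ov}C_\#\le 1/4$. Then
\[
(\textbf{div}\underline{\bm{\sigma}}_0,\mathbf{q}_0)\ge \norm{\mathbf{q}_0}_s^2-\tfrac12\norm{\mathbf{q}_0}_s^2=\tfrac12\norm{\mathbf{q}_0}_s^2,
\qquad
\norm{\underline{\bm{\sigma}}_0}_\Sigma\le \norm{\underline{\bm{\beta}}}_\Sigma+\tfrac12\norm{\mathbf{q}_0}_s\le C\norm{\mathbf{q}_0}_s,
\]
where the estimate on $\norm{\underline{\bm{\beta}}}_\Sigma$ uses the intermediate bounds for $\underline{\bm{\alpha}}$ derived inside the proof of Lemma~\ref{inf-sup for global variational form} (in particular, testing~\eqref{variational form for inf-sup_d} with $\mathbf{v}_h=\tilde k^{-1}\textbf{div}\underline{\bm{\alpha}}$ yields $\norm{\tilde k^{-1/2}\textbf{div}\underline{\bm{\alpha}}}_{L^2}\le\norm{\mathbf{q}_0}_s$, and $\norm{\underline{\bm{\alpha}}}_\mathcal{A}$ is controlled analogously). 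Combining the two displays produces the desired inf-sup with $C_\mathup{ms}$ independent of $h,H,\lambda$.

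The main obstacle I anticipate is the interplay between the overlap factor $N_\mathup{ov}$ and the decay constant $C_\#=C(1+\tfrac1\Lambda)\bigl(1+C^{-1}(1+\tfrac1\Lambda)^{-1/2}\bigr)^{1-l}$: one must ensure the exponential gain in $l$ overwhelms both the polynomial growth of $N_\mathup{ov}$ and the prefactor $1+\Lambda^{-1}$. This dictates a logarithmic lower bound on the number of oversampling layers but does not compromise the contrast- and mesh-size-independence of the final inf-sup constant, since the threshold value of $l$ enters only through a fixed multiplicative factor in $C_\mathup{ms}$. A secondary bookkeeping issue is verifying that the overlap-based square-sum bound is legitimate, which follows from standard quasi-uniformity of $\mathcal{T}_H$ and the localization $\mathrm{supp}(\underline{\bm{\psi}}^i_{j,\mathup{ms}})\subset K_{i,l}$.
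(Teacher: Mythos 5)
Your overall strategy is the same as the paper's (localize the global inf-sup witness, split off the perturbation $\textbf{div}(\underline{\bm{\sigma}}_0-\underline{\bm{\beta}})$, and absorb it using the decay estimate of Lemma \ref{decay property of basis function_2} together with a finite-overlap count), but there is a genuine gap at the very first step: the identification of the global witness as $\underline{\bm{\beta}}=-\sum_{i,j}\beta^i_j\underline{\bm{\psi}}^i_j$ with $\beta^i_j$ the coefficients of $\mathbf{q}_0$ in the $s$-orthonormal basis $\{\mathbf{p}^i_j\}$. The global basis functions $\underline{\bm{\psi}}^i_j$ solve the \emph{relaxed} system (\ref{global multiscale basis}), which carries the extra term $s(\pi\mathbf{q}^i_j,\pi\mathbf{v}_h)$, so $-(\textbf{div}\underline{\bm{\psi}}^i_j,\mathbf{v}_h)=s(\mathbf{p}^i_j,\mathbf{v}_h)-s(\pi\mathbf{q}^i_j,\pi\mathbf{v}_h)\neq s(\mathbf{p}^i_j,\mathbf{v}_h)$. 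Consequently your candidate satisfies $(\textbf{div}\underline{\bm{\beta}},\mathbf{q}_0)=\norm{\mathbf{q}_0}_s^2-s(\pi\mathbf{q},\mathbf{q}_0)$ with $\mathbf{q}=\sum\beta^i_j\mathbf{q}^i_j$, and the second term does not vanish in general, so the claimed identity $(\textbf{div}\underline{\bm{\beta}},\mathbf{q}_0)=\norm{\mathbf{q}_0}_s^2$ fails. If instead you take $\underline{\bm{\beta}}$ to be the true witness from Lemma \ref{inf-sup for global variational form} (the solution of the \emph{non-relaxed} system (\ref{variational form for inf-sup})), the identity holds, but its expansion coefficients in $\{\underline{\bm{\psi}}^i_j\}$ are \emph{not} the $\beta^i_j$; they are the coefficients $d^i_j$ of the Lagrange multiplier $\mathbf{p}$ in the basis $\{\mathbf{q}^i_j\}$. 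Your square-sum/overlap bound then needs $\sum_{i,j}(d^i_j)^2\lesssim\norm{\mathbf{q}_0}_s^2$, which is not the trivial Parseval identity you invoke.

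This is precisely the step the paper has to work for: writing $\tilde{\mathbf{p}}=\sum d^i_j\mathbf{p}^i_j$, it deduces from (\ref{w p variational form}) and (\ref{exist w}) that $\mathbf{q}_0=\tilde{\mathbf{p}}-\pi\mathbf{p}$, and then bounds $\norm{\pi\mathbf{p}}_s\leq C_g^2\norm{\mathbf{q}_0}_s$ by a second application of the global inf-sup of Lemma \ref{inf-sup for global variational form}, arriving at $\sum_{i,j}(d^i_j)^2=\norm{\tilde{\mathbf{p}}}_s^2\leq(1+C_g^2)^2\norm{\mathbf{q}_0}_s^2$ (see (\ref{d p s})). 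Without this argument your perturbation estimate $\norm{\underline{\bm{\sigma}}_0-\underline{\bm{\beta}}}_\Sigma^2\leq CN_\mathup{ov}C_\#\norm{\mathbf{q}_0}_s^2$ is unsupported. The remainder of your outline (Cauchy--Schwarz in the weighted norm, the overlap count $C_\mathup{ol}(l+1)^n$, and choosing $l$ logarithmically so the exponential decay in $C_\#$ dominates) matches the paper and is sound once the coefficient bound is supplied.
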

\begin{proof}
By using the proof of Lemma \ref{inf-sup for global variational form}, there is $\underline{\bm{w}}\in\Sigma_\mathup{glo}$ and $\mathbf{p}\in\mathup{span}\{\mathbf{q}^i_j\}$ such that 
\begin{subequations}
\label{exist w}
\begin{align}
(\mathcal{A}\underline{\bm{w}},\underline{\bm{\tau}}_h)+(\textbf{div}\underline{\bm{\tau}}_h,\mathbf{p})&=0\quad \forall \underline{\bm{\tau}}_h\in\underline{\bm{\Sigma}}_h,\\
-(\textbf{div}\underline{\bm{w}},\mathbf{v}_h)&=s(-\mathbf{q}_0,\mathbf{v}_h)\quad \forall \mathbf{v}_h\in\bm{\mathit{U}}_h,
\end{align}
\end{subequations}
and $s(\mathbf{p},\mathbf{x})=0$ with $s(\mathbf{q}_0,\mathbf{x})=0$ (where $\mathbf{x}\in\text{span}\{(1,0)^t,(0,1)^t\}$ in two dimension while in three dimension, $\mathbf{x}\in\text{span}\{(1,0,0)^t,(0,1,0)^t,(0,0,1)^t\}$), $\norm{\mathbf{q}_0}_s=\norm{-\mathbf{q}_0}_s\leq C_\mathup{g}\frac{(\textbf{div}\underline{\bm{w}},\mathbf{q}_0)}{\norm{\underline{\bm{w}}}_\Sigma}$. Then we have
\begin{equation}
\label{inf sup w}
\norm{\underline{\bm{w}}}_\Sigma\leq C_\mathup{g}\frac{(\textbf{div}\underline{\bm{w}},\mathbf{q}_0)}{\norm{\mathbf{q}_0}_s}=C_\mathup{g}\frac{s(\mathbf{q}_0,\mathbf{q}_0)}{\norm{\mathbf{q}_0}_s}=C_\mathup{g}\norm{\mathbf{q}_0}_s.
\end{equation}
Notice that $\mathbf{p}\in\mathup{span}\{\mathbf{q}^i_j\}$, we can write
$
\mathbf{p}=\sum_{i=1}^N\sum_{j=1}^{l_i}d^i_j\mathbf{q}^i_j.
$
Utilizing the construction of global multiscale basis functions (\ref{global multiscale basis}), we can see that
$
\underline{\bm{w}}=\sum_{i=1}^N\sum_{j=1}^{l_i}d^i_j\underline{\bm{\psi}}^i_j.
$
The above motivates the definition of $\underline{\bm{\sigma}}_0\in\Sigma_\mathup{ms}$, which is given by
$
\underline{\bm{\sigma}}_0=\sum_{i=1}^N\sum_{j=1}^{l_i}d^i_j\underline{\bm{\psi}}^i_{j,\mathup{ms}}.
$
We call $\underline{\bm{\sigma}}_0$ the projection of $\underline{\bm{w}}\in\Sigma_\mathup{ms}$ into the space $\Sigma_\mathup{ms}$. Next, we note that 
\begin{equation}
\label{u q_0}
(\textbf{div}\underline{\bm{\sigma}}_0,\mathbf{q}_0)=(\textbf{div}\underline{\bm{w}},\mathbf{q}_0)+(\textbf{div}(\underline{\bm{\sigma}}_0-\underline{\bm{w}}),\mathbf{q}_0)=\norm{\mathbf{q}_0}_s^2+(\textbf{div}(\underline{\bm{\sigma}}_0-\underline{\bm{w}}),\mathbf{q}_0).
\end{equation}
In terms of the Cauchy-Schwarz inequality, we have
$
(\textbf{div}(\underline{\bm{\sigma}}_0-\underline{\bm{w}}),\mathbf{q}_0)\leq\norm{\tilde{k}^{-\frac{1}{2}}\nabla\cdot(\underline{\bm{\sigma}}_0-\underline{\bm{w}})}_{L^2}\cdot\norm{\mathbf{q}_0}_s.
$
By the definition of $\underline{\bm{u}}$ and $\underline{\bm{w}}$, we have
$
\norm{\tilde{k}^{-\frac{1}{2}}\nabla\cdot(\underline{\bm{\sigma}}_0-\underline{\bm{w}})}_{L^2}^2\leq C_\mathup{ol}(l+1)^n\sum_{i=1}^N\sum_{j=1}^{l_i}(d^i_j)^2\norm{\tilde{k}^{-\frac{1}{2}}\nabla\cdot(\underline{\bm{\psi}}^i_j-\underline{\bm{\psi}}^i_{j,\mathup{ms}})}_{L^2}^2,
$
where $C_\mathup{ol}$ is the maximum number of overlapping regions. By Lemma \ref{decay property of basis function_2}, we know
\begin{equation}
\label{u-w decay}
\norm{\underline{\bm{\sigma}}_0-\underline{\bm{w}}}_\Sigma^2\leq C_\mathup{ol}(l+1)^n\sum_{i=1}^N\sum_{j=1}^{l_i}(d^i_j)^2 C_\#.
\end{equation}
Using (\ref{global multiscale basis}), the definition of $\mathbf{p}$ and $\underline{\bm{w}}$, we see that   $\mathbf{p}$ and $\underline{\bm{w}}$ satisfy:
\begin{subequations}
	\label{w p variational form}
	\begin{align}		
		(\mathcal{A}\underline{\bm{w}},\underline{\bm{\tau}}_h)+(\textbf{div}\underline{\bm{\tau}}_h,\mathbf{p})&=0  \quad \forall \underline{\bm{\tau}}_h\in \underline{\bm{\mathit{\Sigma}}}_h, \label{w p variational form_a}\\
s(\pi\mathbf{p},\pi\mathbf{v}_h)-(\textbf{div}\underline{\bm{w}},\mathbf{v}_h)&= s(\tilde{\mathbf{p}},\mathbf{v}_h)\quad \forall \mathbf{v}_h\in \bm{\mathit{U}}_h,\label{w p variational form_d}
	\end{align}	
\end{subequations}
where $\tilde{\mathbf{p}}=\sum_{i=1}^N\sum_{j=1}^{l_i}d^i_j\mathbf{p}^i_j$.
Combining (\ref{w p variational form}) and (\ref{exist w}), we have
$
s(\mathbf{q}_0,\mathbf{v}_h)=s(\tilde{\mathbf{p}}-\pi\mathbf{p},\mathbf{v}_h)$  for all $\mathbf{v}_h\in \bm{\mathit{U}}_h.
$
Then we have $\mathbf{q}_0=\tilde{\mathbf{p}}-\pi\mathbf{p}$. So $\norm{\tilde{\mathbf{p}}}_s\leq\norm{\mathbf{q}_0}_s+\norm{\pi\mathbf{p}}_s.$
Note that by Lemma \ref{inf-sup for global variational form}, there is $\underline{\mathbf{x}}\in\Sigma_\mathup{glo}$ such that
\begin{align*}
\norm{\pi\mathbf{p}}_s\leq C_\mathup{g}\frac{(\textbf{div}\underline{\mathbf{x}},\pi\mathbf{p})}{\norm{\underline{\mathbf{x}}}_\Sigma}&=C_\mathup{g}\frac{(\textbf{div}\underline{\mathbf{x}},\pi\mathbf{p}-\mathbf{p})+(\textbf{div}\underline{\mathbf{x}},\mathbf{p})}{\norm{\underline{\mathbf{x}}}_\Sigma}=C_\mathup{g}\frac{-(\mathcal{A}\underline{\mathbf{w}},\underline{\mathbf{x}})}{\norm{\underline{\mathbf{x}}}_\Sigma}\leq C_\mathup{g}\norm{\underline{\bm{w}}}_\Sigma\leq C_\mathup{g}^2\norm{\mathbf{q}_0}_s,
\end{align*}
where (\ref{inf sup w}) has been used in the last line.
Combining above two estimates, we obtain
\begin{equation}
\label{d p s}
\sum_{i=1}^N\sum_{j=1}^{l_i}(d^i_j)^2=\norm{\tilde{\mathbf{p}}}_s^2\leq(1+C_\mathup{g}^2)^2\norm{\mathbf{q}_0}_s^2.
\end{equation}
Then by (\ref{d p s}) and (\ref{u-w decay}), we have
$
\norm{\underline{\bm{\sigma}}_0-\underline{\bm{w}}}_\Sigma^2\leq C_\mathup{ol}(l+1)^n C_\#(1+C_\mathup{g}^2)^2\norm{\mathbf{q}_0}_s^2.
$
Combining above inequality and (\ref{u q_0}), we obtain that
$
(\textbf{div}\underline{\bm{\sigma}}_0,\mathbf{q}_0)\geq \big(1-C_\mathup{ol}^{\frac{1}{2}}(l+1)^{\frac{n}{2}}C_\#^{\frac{1}{2}}(1+C_\mathup{g}^2)\big)\norm{\mathbf{q}_0}_s^2.
$
Besides, by (\ref{u-w decay}), we have
\[
\norm{\underline{\bm{\sigma}}_0}_\Sigma^2\leq2\norm{\underline{\bm{\sigma}}_0-\underline{\bm{w}}}_\Sigma^2+2\norm{\underline{\bm{w}}}_\Sigma^2\leq2C_\mathup{ol}(l+1)^n C_\#(1+C_\mathup{g}^2)^2\norm{\mathbf{q}_0}_s^2+2C_\mathbf{g}^2\norm{\mathbf{q}_0}_s^2.
\]
Then by above estimates, we obtain
\[
\frac{(\textbf{div}\underline{\bm{\sigma}}_0,\mathbf{q}_0)}{\norm{\underline{\bm{\sigma}}_0}_\Sigma}\geq\frac{\big(1-C_\mathup{ol}^{\frac{1}{2}}(l+1)^{\frac{n}{2}}C_\#^{\frac{1}{2}}(1+C_\mathup{g}^2)\big)\norm{\mathbf{q}_0}_s}{\sqrt{2(C_\mathup{ol}(l+1)^n C_\#(1+C_\mathup{g}^2)^2+C_\mathbf{g}^2)}}.
\]
That is, we have
$
\norm{\mathbf{q}_0}_s\leq C_\mathup{ms}\frac{(\textbf{div}\underline{\bm{\sigma}}_0,\mathbf{q}_0)}{\norm{\underline{\bm{\sigma}}_0}_\Sigma},
$
where $C_\mathup{ms}=\sqrt{2(C_\mathup{ol}(l+1)^n C_\#(1+C_\mathup{g}^2)^2+C_\mathbf{g}^2)}/(1-C_\mathup{ol}^{\frac{1}{2}}(l+1)^{\frac{n}{2}}C_\#^{\frac{1}{2}}(1+C_\mathup{g}^2))$.
\end{proof}
\begin{theorem}
\label{final convergence}
Let $(\underline{\bm{\sigma}}_\mathup{ms},\mathbf{u}_\mathup{ms})\in\Sigma_\mathup{ms}\times U_\mathup{aux}$ the multiscale solution of (\ref{multiscale solution}) and let $(\underline{\bm{\sigma}}_h,\mathbf{u}_h)\in \underline{\bm{\mathit{\Sigma}}}_h\times\bm{\mathit{U}}_h$ be the fine solution of (\ref{discrete formula}). Then we have
\begin{align*}
\norm{\underline{\bm{\sigma}}_\mathup{ms}-\underline{\bm{\sigma}}_h}_\mathcal{A}^2+\norm{\mathbf{u}_\mathup{ms}-\pi(\mathbf{u}_h)}_{L^2}^2&\leq (1+(\max\{\tilde{k}^{-\frac{1}{2}}\})^2)(1+C_\text{ms})^4C_\text{ol}(l+1)^nC_\#(1+C_g^2)^2\norm{\tilde{k}^{-1}\mathbf{f}}_s^2\\
&\quad+2(\max\{\tilde{k}^{-\frac{1}{2}}\})^2\frac{1}{\Lambda}(C_Sh^s\norm{\mathbf{f}}_{L^2}+C\frac{1}{\sqrt{\Lambda}}\norm{(I-\pi)(\tilde{k}^{-1}\mathbf{f})}_s+C\norm{\mathbf{f}}_{L^2})^2,
\end{align*}
where $C,C_S>0$ are independent of mesh sizes $h,H$ and the Lam$\rm \acute{e}$ coefficient $\lambda$, but $C_S$ depends on the regularity of the PDE problem (\ref{our pde}) (see Assumption \ref{0128b}).
\end{theorem}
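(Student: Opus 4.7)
The plan is to leverage the identity $\underline{\bm{\sigma}}_h=\underline{\bm{\sigma}}_\mathup{glo}$ and $\pi\mathbf{u}_h=\mathbf{u}_\mathup{glo}$ established in Lemma \ref{glo-fine solution} to reduce the theorem to comparing the multiscale solution $(\underline{\bm{\sigma}}_\mathup{ms},\mathbf{u}_\mathup{ms})$ with the global solution $(\underline{\bm{\sigma}}_\mathup{glo},\mathbf{u}_\mathup{glo})$. Since both solutions test against the same displacement space $U_\mathup{aux}$ but use different stress spaces ($\Sigma_\mathup{ms}$ vs.\ $\Sigma_\mathup{glo}$), I cannot invoke Galerkin orthogonality directly; instead I will build a localized projection and quantify the defect using the decay result Lemma \ref{decay property of basis function_2} together with the inf-sup stability Lemma \ref{Stability of using multiscale basis functions}.

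First I would expand $\underline{\bm{\sigma}}_\mathup{glo}=\sum_{i,j}a^i_j\underline{\bm{\psi}}^i_j$ in the global basis and define its localized counterpart $\underline{\bm{\sigma}}^\star=\sum_{i,j}a^i_j\underline{\bm{\psi}}^i_{j,\mathup{ms}}\in\Sigma_\mathup{ms}$ together with $\mathbf{u}^\star=\sum_{i,j}a^i_j\mathbf{q}^i_{j,\mathup{ms}}$. Subtracting the defining relations (\ref{global multiscale basis}) and (\ref{multiscale basis}) element by element and summing, one obtains an error system for $(\underline{\bm{\sigma}}_\mathup{glo}-\underline{\bm{\sigma}}^\star,\mathbf{u}_\mathup{glo}-\mathbf{u}^\star)$ whose right-hand sides are governed by the decay quantities. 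Combining the decomposition with Lemma \ref{decay property of basis function_2} and controlling the number of overlapping oversampled regions by $C_\mathup{ol}(l+1)^n$ yields
\[
\norm{\underline{\bm{\sigma}}_\mathup{glo}-\underline{\bm{\sigma}}^\star}_\Sigma^2\leq C_\mathup{ol}(l+1)^n C_\#\sum_{i,j}(a^i_j)^2.
\]
The coefficient sum $\sum(a^i_j)^2$ can then be identified with a norm of a natural source projection; using (\ref{global multiscale basis_d}) tested against the expansion and the characterization from Lemma \ref{characterization of global basis functions}, one shows $\sum(a^i_j)^2\leq(1+C_g^2)^2\norm{\tilde{k}^{-1}\mathbf{f}}_s^2$, analogously to the coefficient estimate (\ref{d p s}) already appearing in the proof of Lemma \ref{Stability of using multiscale basis functions}.

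Next I would form the error equations for $\underline{\bm{e}}_\sigma\coloneqq\underline{\bm{\sigma}}_\mathup{ms}-\underline{\bm{\sigma}}^\star\in\Sigma_\mathup{ms}$ and $\mathbf{e}_u\coloneqq\mathbf{u}_\mathup{ms}-\mathbf{u}_\mathup{glo}\in U_\mathup{aux}$: comparing (\ref{multiscale solution}) with (\ref{global variational form}) gives
\begin{align*}
(\mathcal{A}\underline{\bm{e}}_\sigma,\underline{\bm{\tau}})+(\textbf{div}\underline{\bm{\tau}},\mathbf{e}_u) &= (\mathcal{A}(\underline{\bm{\sigma}}_\mathup{glo}-\underline{\bm{\sigma}}^\star),\underline{\bm{\tau}}),\\
(\textbf{div}\underline{\bm{e}}_\sigma,\mathbf{v}) &= -(\textbf{div}(\underline{\bm{\sigma}}_\mathup{glo}-\underline{\bm{\sigma}}^\star),\mathbf{v}),
\end{align*}
for all $(\underline{\bm{\tau}},\mathbf{v})\in\Sigma_\mathup{ms}\times U_\mathup{aux}$. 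Testing with $\underline{\bm{\tau}}=\underline{\bm{e}}_\sigma$, $\mathbf{v}=\mathbf{e}_u$, and applying Lemma \ref{Stability of using multiscale basis functions} to recover the $\norm{\cdot}_s$-control of $\mathbf{e}_u$ (using that $s(\mathbf{e}_u,\mathbf{x})=0$ follows from integrating the momentum balance against rigid translations), one obtains
\[
\norm{\underline{\bm{e}}_\sigma}_\mathcal{A}^2+\norm{\mathbf{e}_u}_s^2 \leq (1+C_\mathup{ms})^4\,\norm{\underline{\bm{\sigma}}_\mathup{glo}-\underline{\bm{\sigma}}^\star}_\Sigma^2.
\]

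Finally, I would combine these pieces with the triangle inequalities
$\norm{\underline{\bm{\sigma}}_\mathup{ms}-\underline{\bm{\sigma}}_h}_\mathcal{A}\leq\norm{\underline{\bm{e}}_\sigma}_\mathcal{A}+\norm{\underline{\bm{\sigma}}_\mathup{glo}-\underline{\bm{\sigma}}^\star}_\mathcal{A}$
and $\norm{\mathbf{u}_\mathup{ms}-\pi\mathbf{u}_h}_{L^2}=\norm{\mathbf{e}_u}_{L^2}\leq(\max\tilde{k}^{-1/2})\norm{\mathbf{e}_u}_s$ (where I use $\pi\mathbf{u}_h=\mathbf{u}_\mathup{glo}$). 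Adding the $\mathbf{u}_\mathup{glo}$--$\mathbf{u}_h$ discrepancy estimate from Lemma \ref{glo-fine solution} (which contributes the second summand with $C_Sh^s\norm{\mathbf{f}}_{L^2}$, $\Lambda^{-1/2}\norm{(I-\pi)(\tilde{k}^{-1}\mathbf{f})}_s$ and $\norm{\mathbf{f}}_{L^2}$) reproduces the stated inequality. I anticipate the chief obstacle to be the step that converts the decay bound (phrased per basis function) into the global estimate while tracking the overlap factor $C_\mathup{ol}(l+1)^n$ and the coefficient identification $\sum(a^i_j)^2\lesssim\norm{\tilde{k}^{-1}\mathbf{f}}_s^2$, since $\Sigma_\mathup{ms}\not\subset\Sigma_\mathup{glo}$ prevents direct use of Galerkin orthogonality and forces the careful use of the localized projection $\underline{\bm{\sigma}}^\star$ throughout.
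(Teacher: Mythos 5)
Your proposal is correct and follows essentially the same route as the paper's proof: a Céa/Strang-type argument against the localized comparison function $\sum c^i_j\underline{\bm{\psi}}^i_{j,\mathup{ms}}$, the overlap-counted decay bound with the coefficient estimate $\sum(c^i_j)^2\leq(1+C_g^2)^2\norm{\tilde k^{-1}\mathbf{f}}_s^2$, the $\Sigma_\mathup{ms}$ inf-sup of Lemma \ref{Stability of using multiscale basis functions} to recover the displacement error, and Lemma \ref{glo-fine solution} for the $\mathbf{u}_h-\mathbf{u}_\mathup{glo}$ contribution. The only differences are organizational (you phrase the error system relative to the localized projection rather than specializing an arbitrary $\underline{\bm{v}}\in\Sigma_\mathup{ms}$ at the end, and your second error equation has an inconsequential sign slip), not substantive.
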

\begin{proof}
By (\ref{multiscale solution}) and (\ref{discrete formula}), we have
\begin{subequations}
	\label{multiscale solution-discrete formula}
	\begin{align}		
		(\mathcal{A}(\underline{\bm{\sigma}}_h-\underline{\bm{\sigma}}_\mathup{ms}),\underline{\bm{\tau}}_h)+(\textbf{div}\underline{\bm{\tau}}_h,\mathbf{u}_h-\mathbf{u}_\mathup{ms})&=0  \quad \forall \underline{\bm{\tau}}_h\in \Sigma_\mathup{ms}\subset\underline{\bm{\mathit{\Sigma}}}_h, \label{multiscale solution-discrete formula_a}\\
(\textbf{div}(\underline{\bm{\sigma}}_h-\underline{\bm{\sigma}}_\mathup{ms}),\mathbf{v}_h)&= 0\quad \forall \mathbf{v}_h\in U_\mathup{aux}\subset\bm{\mathit{U}}_h.\label{multiscale solution-discrete formula_d}
	\end{align}	
\end{subequations}
By the construction of multiscale basis functions (\ref{multiscale basis}), we have $\nabla\cdot\Sigma_\mathup{ms}\subset\tilde{k} U_\mathup{aux}$. Therefore, we have
\begin{equation}
\label{div Sigma in s}
(\textbf{div}\underline{\bm{v}},\pi(\mathbf{u}_h)-\mathbf{u}_h)=0\quad\forall \underline{\bm{v}}\in\Sigma_\mathup{ms}.
\end{equation}
Then, for any $\underline{\bm{v}}\in\Sigma_\mathup{ms}$, we have
\begin{align*}
(\mathcal{A}(\underline{\bm{\sigma}}_h-\underline{\bm{\sigma}}_\mathup{ms}),\underline{\bm{\sigma}}_h-\underline{\bm{\sigma}}_\mathup{ms})&=(\mathcal{A}(\underline{\bm{\sigma}}_h-\underline{\bm{\sigma}}_\mathup{ms}),\underline{\bm{\sigma}}_h-\underline{\bm{v}})+(\mathcal{A}(\underline{\bm{\sigma}}_h-\underline{\bm{\sigma}}_\mathup{ms}),\underline{\bm{v}}-\underline{\bm{\sigma}}_\mathup{ms})\\
&=(\mathcal{A}(\underline{\bm{\sigma}}_h-\underline{\bm{\sigma}}_\mathup{ms}),\underline{\bm{\sigma}}_h-\underline{\bm{v}})-(\textbf{div}(\underline{\bm{v}}-\underline{\bm{\sigma}}_\mathup{ms}),\pi(\mathbf{u}_h)-\mathbf{u}_\mathup{ms})\\
&=(\mathcal{A}(\underline{\bm{\sigma}}_h-\underline{\bm{\sigma}}_\mathup{ms}),\underline{\bm{\sigma}}_h-\underline{\bm{v}})-(\textbf{div}(\underline{\bm{v}}-\underline{\bm{\sigma}}_h),\pi(\mathbf{u}_h)-\mathbf{u}_\mathup{ms}),
\end{align*}
where (\ref{multiscale solution-discrete formula_d}) has been used in the last line. Next we obtain that
\begin{align*}
\norm{\underline{\bm{\sigma}}_h-\underline{\bm{\sigma}}_\mathup{ms}}_\mathcal{A}^2&\leq\norm{\underline{\bm{\sigma}}_h-\underline{\bm{\sigma}}_\mathup{ms}}_\mathcal{A}\cdot\norm{\underline{\bm{\sigma}}_h-\underline{\bm{v}}}_\mathcal{A}+\norm{\tilde{k}^{-\frac{1}{2}}\textbf{div}(\underline{\bm{v}}-\underline{\bm{\sigma}}_h)}_{L^2}\cdot\norm{\pi(\mathbf{u}_h)-\mathbf{u}_\mathup{ms}}_s\\
&\leq \norm{\underline{\bm{\sigma}}_h-\underline{\bm{v}}}_\Sigma\cdot(\norm{\underline{\bm{\sigma}}_h-\underline{\bm{\sigma}}_\mathup{ms}}_\mathcal{A}+\norm{\pi(\mathbf{u}_h)-\mathbf{u}_\mathup{ms}}_s).
\end{align*}
In terms of Lemma \ref{Stability of using multiscale basis functions}, there is $\underline{\bm{w}}\in\Sigma_\mathup{ms}$ such that
\begin{equation}
\label{estimate for u_h and u_ms}
\begin{aligned}
\norm{\pi(\mathbf{u}_h)-\mathbf{u}_\mathup{ms}}_s&\leq C_\mathup{ms}\frac{(\textbf{div}\underline{\bm{w}},\pi(\mathbf{u}_h)-\mathbf{u}_\mathup{ms})}{\norm{\underline{\bm{w}}}_\Sigma}=C_\mathup{ms}\frac{(\textbf{div}\underline{\bm{w}},\mathbf{u}_h-\mathbf{u}_\mathup{ms})}{\norm{\underline{\bm{w}}}_\Sigma}=C_\mathup{ms}\frac{-(\mathcal{A}(\underline{\bm{\sigma}}_h-\underline{\bm{\sigma}}_\mathup{ms}),\underline{\bm{w}}}{\norm{\underline{\bm{w}}}_\Sigma}\\
&\leq C_\mathup{ms}\norm{\underline{\bm{\sigma}}_h-\underline{\bm{\sigma}}_\mathup{ms}}_\mathcal{A}.
\end{aligned}
\end{equation}
By above two inequalities, we have
$
\norm{\underline{\bm{\sigma}}_h-\underline{\bm{\sigma}}_\mathup{ms}}_\mathcal{A}\leq(1+C_\mathup{ms})\norm{\underline{\bm{\sigma}}_h-\underline{\bm{v}}}_\Sigma.
$
Since $\underline{\bm{\sigma}}_h=\underline{\bm{\sigma}}_\mathup{glo}$ (see Lemma \ref{glo-fine solution}), we have the following estimate
\[
\norm{\underline{\bm{\sigma}}_h-\underline{\bm{\sigma}}_\mathup{ms}}_\mathcal{A}\leq(1+C_\mathup{ms})\norm{\underline{\bm{\sigma}}_\mathup{glo}-\underline{\bm{v}}}_\Sigma,\quad \forall \underline{\bm{v}}\in\Sigma_\mathup{ms}.
\]
Suppose $\underline{\bm{\sigma}}_\mathup{glo}=\sum_{i=1}^N\sum_{j=1}^{l_i}c^i_j\underline{\bm{\psi}}^i_j$, and let $\underline{\bm{v}}=\sum_{i=1}^N\sum_{j=1}^{l_i}c^i_j\underline{\bm{\psi}}^i_{j,\mathup{ms}}$, then combining the proof of Lemma \ref{Stability of using multiscale basis functions}, problem (\ref{discrete formula}) and the fact that $\underline{\bm{\sigma}}_h=\underline{\bm{\sigma}}_\mathup{glo}$, we get
\[
\norm{\underline{\bm{\sigma}}_\mathup{glo}-\underline{\bm{v}}}_\Sigma=\norm{\sum_{i=1}^N\sum_{j=1}^{l_i}c^i_j(\underline{\bm{\psi}}^i_j-\underline{\bm{\psi}}^i_{j,\mathup{ms}})}_\Sigma\leq C_\mathup{ol}^{\frac{1}{2}}(l+1)^{\frac{n}{2}}C_\#^{\frac{1}{2}}(1+C_\mathup{g}^2)\norm{\tilde{k}^{-1}\mathbf{f}}_s.
\]
Therefore, we obtain that
\begin{equation}
\label{estimate for sigma_h and sigma_ms}
\norm{\underline{\bm{\sigma}}_h-\underline{\bm{\sigma}}_\mathup{ms}}_\mathcal{A}\leq(1+C_\mathup{ms})C_\mathup{ol}^{\frac{1}{2}}(l+1)^{\frac{n}{2}}C_\#^{\frac{1}{2}}(1+C_\mathup{g}^2)\norm{\tilde{k}^{-1}\mathbf{f}}_s.
\end{equation}
Besides, in terms of Lemma \ref{glo-fine solution}, we know $\pi(\mathbf{u}_h)=\mathbf{u}_\mathup{glo}$, and
\begin{equation}
\label{uh - ums}
\begin{aligned}
&\quad\norm{\mathbf{u}_h-\mathbf{u}_\mathup{ms}}_{L^2}=\norm{\tilde{k}^{-\frac{1}{2}}\mathbf{u}_h-\mathbf{u}_\mathup{ms}}_s\leq\max\{\tilde{k}^{-\frac{1}{2}}\}\norm{\mathbf{u}_h-\mathbf{u}_\mathup{ms}}_s\\
&\leq\max\{\tilde{k}^{-\frac{1}{2}}\}(\norm{\mathbf{u}_h-\pi\mathbf{u}_h}_s+\norm{\pi\mathbf{u}_h-\mathbf{u}_\mathup{ms}}_s)=\max\{\tilde{k}^{-\frac{1}{2}}\}(\norm{\mathbf{u}_h-\mathbf{u}_\mathup{glo}}_s+\norm{\pi\mathbf{u}_h-\mathbf{u}_\mathup{ms}}_s)\\
&\leq \max\{\tilde{k}^{-\frac{1}{2}}\}\frac{1}{\sqrt{\Lambda}}(C_Sh^s\norm{\mathbf{f}}_{L^2}+C\frac{1}{\sqrt{\Lambda}}\norm{(I-\pi)(\tilde{k}^{-1}\mathbf{f})}_s+C\norm{\mathbf{f}}_{L^2})\\
&\quad+\max\{\tilde{k}^{-\frac{1}{2}}\}C_\mathup{ms}(1+C_\mathup{ms})C_\mathup{ol}^{\frac{1}{2}}(l+1)^{\frac{n}{2}}C_\#^{\frac{1}{2}}(1+C_\mathup{g}^2)\norm{\tilde{k}^{-1}\mathbf{f}}_s,
\end{aligned}
\end{equation}
where (\ref{estimate for u_h and u_ms}) and (\ref{estimate for sigma_h and sigma_ms}) are used in the last line. $C,C_S>0$ are independent of mesh sizes $h,H$ and the Lam$\rm \acute{e}$ coefficient $\lambda$, but $C_S$ depends on the regularity of the PDE problem (\ref{our pde}).
Finally, combining (\ref{estimate for sigma_h and sigma_ms}) and (\ref{uh - ums}), we obtain the desired estimate. This completes the convergence proof.
\end{proof}
\begin{remark}
\label{specific convergence order}
It's clear that $\max\{\tilde{k}^{-\frac{1}{2}}\}=O(H)$. To obtain $O(H)$ convergence, we need to choose the size of the oversampling domain such that
\[
(1+(\max\{\tilde{k}^{-\frac{1}{2}}\})^2)(1+C_\text{ms})^4C_\text{ol}(l+1)^nC_\#(1+C_g^2)^2=O(1).
\]
Therefore, we see that the size of the oversampling domain $l=O(\text{log}(\text{max}\{k\}/H^2))$.
\end{remark}

\section{Numerical experiments}
\label{Numerical experiments}
In this section, we perform some numerical simulations to show the performance of the proposed multiscale method. We performed the computations in MATLAB 2021a on a Lenovo ThinkCentre M80q Gen 4 desktop with an Intel Core i9-13900T processor and 32GB RAM. In our experiments, we take the computational domain $\Omega$ as $\Omega=[0,1]^2$. We give two test models as shown in Figure \ref{Heterogeneity pattern of coefficient}, where the Young's modulus $E$ is taken as a piecewise constant with heterogeneity pattern. For clearer, we take the yellow part as $E_1$ and the blue part as $E_2$. The Lam$\rm \acute{e}$ coefficients $\lambda,\mu$ are denoted in (\ref{Lame constants}). In Sections~\ref{test model 1}-\ref{test model 2}, we present stress field errors and corresponding plots for the Neumann boundary condition (Eq.~\eqref{our pde_d}). In Section~\ref{incompressible}, we consider nearly incompressible materials (under Neumann boundary condition). The performance of our multiscale method for both Dirichlet and mixed boundary conditions is analyzed in Section~\ref{different bdcs}. Computational costs are reported in Section~\ref{times}. For test cases in Sections~\ref{test model 1}--\ref{incompressible}, we use the source term $\mathbf{f}=(\cos\pi x\sin\pi y,0)^t$ (satisfying $\int_\Omega\mathbf{f}=\mathbf{0}$), while setting $\mathbf{f}=(1,1)^t$ in Section~\ref{different bdcs}.
\begin{figure}[tbph] 
		\centering 
		\includegraphics[height=6cm,width=14cm]{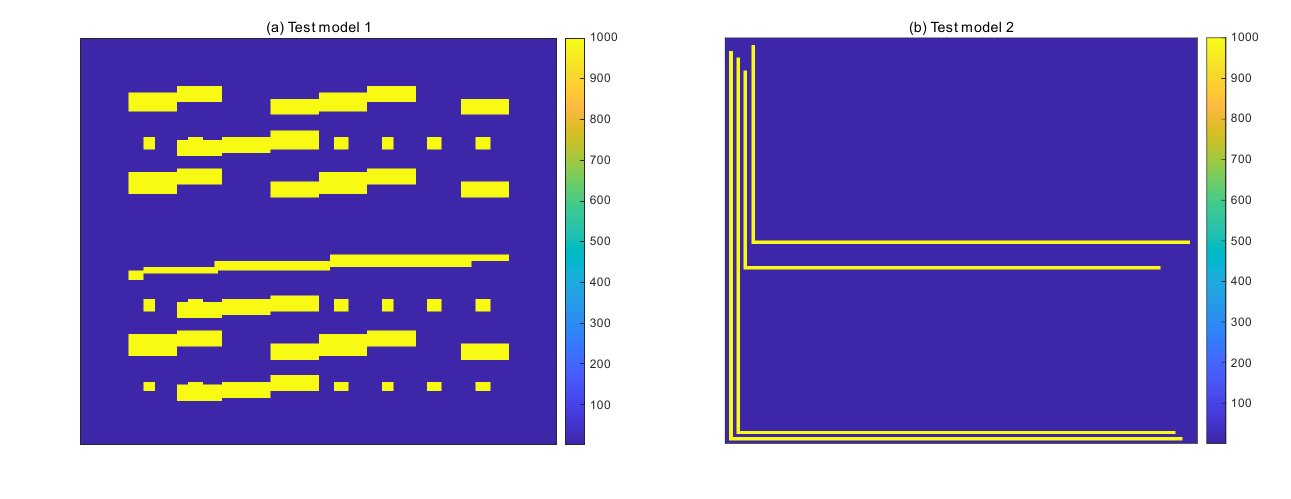} 
		\caption{Young’s Modulus of the test models}
		\label{Heterogeneity pattern of coefficient}
\end{figure}

We divide the domain $\Omega$ into $128\times128$ fine grid blocks. The coarse mesh size $H$ will be chosen from $1/8$, $1/16$ and $1/32$. The degrees of freedom (DOFs) used to obtain the reference solution is 2065920 so $128\times128$ fine grid blocks are enough to obtain the reference solution (In most existing studies, fine-scale discretizations for reference solutions involve only $\sim 10^5$ DOFs, even with smaller $h$ (e.g., 160,801 DOFs at $h = 1/400$ \cite{YeEric2023, YeJinEric2023, EricSM2020}; 115,200 DOFs at $h = 1/225$ \cite{EricFu2023})). In order to evaluate the accuracy of the multiscale solution, we use the following relative errors: 
 \begin{equation*}
e_{\sigma}=\frac{\norm{\underline{\bm{\sigma}}_h-\underline{\bm{\sigma}}_{\text{ms}}}_\mathcal{A}}{\norm{\underline{\bm{\sigma}}_h}_{\mathcal{A}}},\quad e_u = \frac{\norm{\mathbf{u}_h-\mathbf{u}_{\text{ms}}}_{L^2}}{\norm{\mathbf{u}_h}_{L^2}}.
 \end{equation*}
Note that under homogeneous Neumann boundary conditions, the multiscale solution differs from the reference solution by a rigid body motion. Thus, we only report stress field errors for the pure Neumann case, while Section~\ref{different bdcs} (with Dirichlet/mixed boundary conditions) presents both stress and displacement errors. For clarity, simplified notations are provided in Table~\ref{simplified notations}.
\begin{table}
	\footnotesize
	\centering
	\caption{Simplified notations}
	\label{simplified notations}
	\begin{tabular}{cc}
		\hline
		Parameters &Simplified symbols  \\
		\hline
		Contrast value (yellow in Figure \ref{Heterogeneity pattern of coefficient}) & $E_1$\\
        Contrast value (blue in Figure \ref{Heterogeneity pattern of coefficient}) &  $E_2$\\
        Contrast ratio & $E_1/E_2$\\
        Poisson's ratio (yellow in Figure \ref{Heterogeneity pattern of coefficient}) & $\nu_1$\\
        Poisson's ratio (blue in Figure \ref{Heterogeneity pattern of coefficient}) &  $\nu_2$\\
		Number of multiscale bases in each coarse element & $Nbf$\\
        Number of oversampling layers for each coarse element & $osly$\\
		\hline
	\end{tabular}
\end{table}

\subsection{Test model 1}
\label{test model 1}
In the first test model, we use Test model 1 of Figure \ref{Heterogeneity pattern of coefficient}. We take Poisson's ratio $\nu_1=\nu_2=0.35$. Let $E_2=1$ and $E_1=10^3,10^4,10^5,10^6$ respectively. 

To study the effects of different number of basis functions on the relative errors for the stress, we report
the errors with respect to the number of basis functions in Table \ref{errors changing by nbf}. Note that in Table \ref{errors changing by nbf}, we take
$osly=4$, $H=1/32$, $E_1/E_2=10^4$. It is clear that more number of basis functions yields more accurate CEM-GMsFEM
solutions.

Numerical errors for the stress in Test model 1 with different $E_1/E_2$, $osly$ and $H$ are shown in Tables \ref{errors_1}, \ref{errors_2} and Figure \ref{Relative errors stress of test model 1} (Let $Nbf=6$). 
We observe that as $osly$ reaches to $4$, the relative errors for stress with $H=1/32$ decrease to $0.01$ while the relative errors for stress with $H=1/16$ decays to $0.001$. We also find the independent relationship between errors and contrast ratios. In particular, the errors almost remain the same when $osly$ reaches to $4$ as the contrast ratios $E_1/E_2$ increase from $10^3$ to $10^6$.

We demonstrate components of the reference solution for the stress (i.e. $(\underline{\bm{\sigma}}_h)_{11},(\underline{\bm{\sigma}}_h)_{12},(\underline{\bm{\sigma}}_h)_{22}$) in (a)-(c) of Figure \ref{128_stress_test_model_1} and components of the multiscale solution for the stress with $H=1/16$, $osly=4$, $Nbf=6$ (i.e. $(\underline{\bm{\sigma}}_{\text{ms}})_{11},(\underline{\bm{\sigma}}_{\text{ms}})_{12},(\underline{\bm{\sigma}}_{\text{ms}})_{22}$) in (d)-(f) of Figure \ref{128_stress_test_model_1}. Observing (a)-(c) and (d)-(f) of Figure \ref{128_stress_test_model_1}, it is almost the same   
between the multiscale solution and the reference solution.

Furthermore, we investigate the convergence behavior of the mixed CEM-GMsFEM solution with respect to the coarse mesh size $H$. The number of oversampling layers are set according to Theorem \ref{final convergence} and $Nbf=6$. The numerical results, presented in Table \ref{osly and H}, demonstrate that as $H$ becomes smaller, more oversampling regions are needed to ensure the convergence rate.

\begin{table}
	\footnotesize
	\centering
	\caption{Relative errors for the stress in Test model 1 with $H=1/32,osly=4,E_1/E_2=10^4$}
	\label{errors changing by nbf}
	\begin{tabular}{ccccccc}
		\hline
		$Nbf$ &1 &2  &3  &4  &5 &6  \\
		\hline
		$e_{\sigma}$ &0.744810001 &0.043834367	&0.026334123 &0.022771214 &0.022595145 &0.022514092\\
		\hline
	\end{tabular}
\end{table}

\begin{table}
	\footnotesize
	\centering
	\caption{Relative errors for the stress in Test model 1 with different contrast ratios and $osly$ under Neumann boundary condition ($H=1/32,Nbf=6$).}
	\label{errors_1}
	\begin{tabular}{ccccc}
		\hline
		$osly\backslash E_1/E_2$ & $10^3$ & $10^4$ & $10^5$ &$10^6$ \\
		\hline
		 1 &0.619165237	&0.438103348	&0.572718412	&0.483989972\\
2 &0.187297244	&0.112316887	&0.147330352	&0.118915816\\
3 &0.064275411	&0.027982006	&0.034052014	&0.025944477\\
4 &0.030563604	&0.022514092	&0.024104299	&0.019580411\\
5 &0.013770593	&0.010902861	&0.015349838	&0.010475281
		\\
		\hline
	\end{tabular}
\end{table}
\begin{table}
	\footnotesize
	\centering
	\caption{Relative errors for the stress in Test model 1 with different contrast ratios and $osly$ under Neumann boundary condition ($H=1/16,Nbf=6$).}
	\label{errors_2}
	\begin{tabular}{ccccc}
		\hline
		$osly\backslash E_1/E_2$ & $10^3$ &$10^4$ &$10^5$ &$10^6$\\
		\hline
		1 &0.201881723	&0.180639832	&0.257984778	&0.233849945\\
2 &0.138836717	&0.077568322	&0.198167688	&0.132833547\\
3 &0.038690094	&0.033262756	&0.036356063	&0.039588161\\
4 &0.004367468	&0.004370127	&0.004370397	&0.004370424\\
		\hline	
	\end{tabular}
\end{table}
\begin{figure}[tbph] 
		\centering 
		\includegraphics[height=5.5cm,width=10cm]{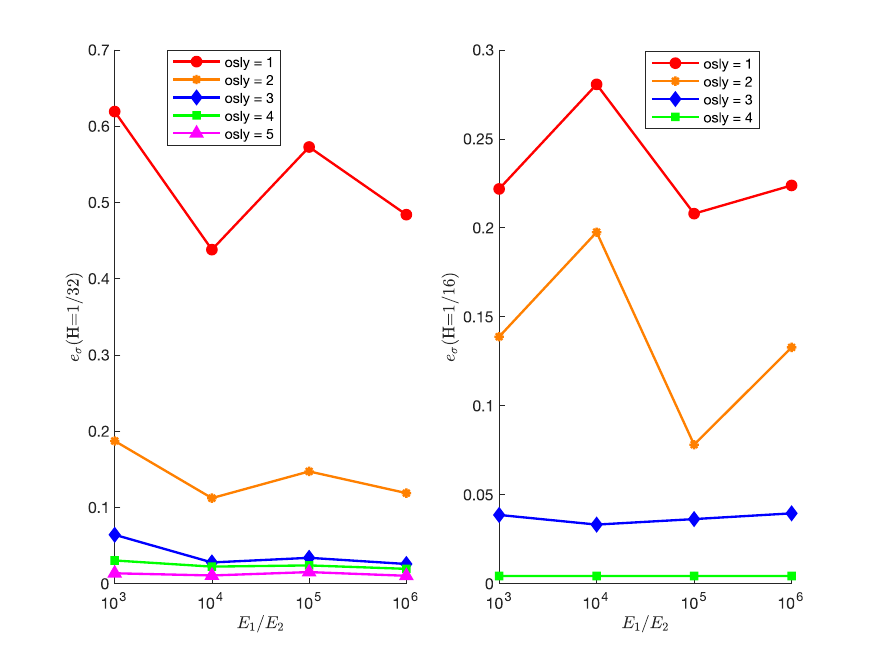} 
		\caption{Relative errors for the stress in Test model 1 with different $osly$ and $E_1/E_2$ under Neumann boundary condition ($Nbf=6$, $H=1/32$(left), $H=1/16$(right))}
		\label{Relative errors stress of test model 1}
\end{figure}

\begin{figure}[tbph] 
		\centering 
		\includegraphics[height=8cm,width=15cm]{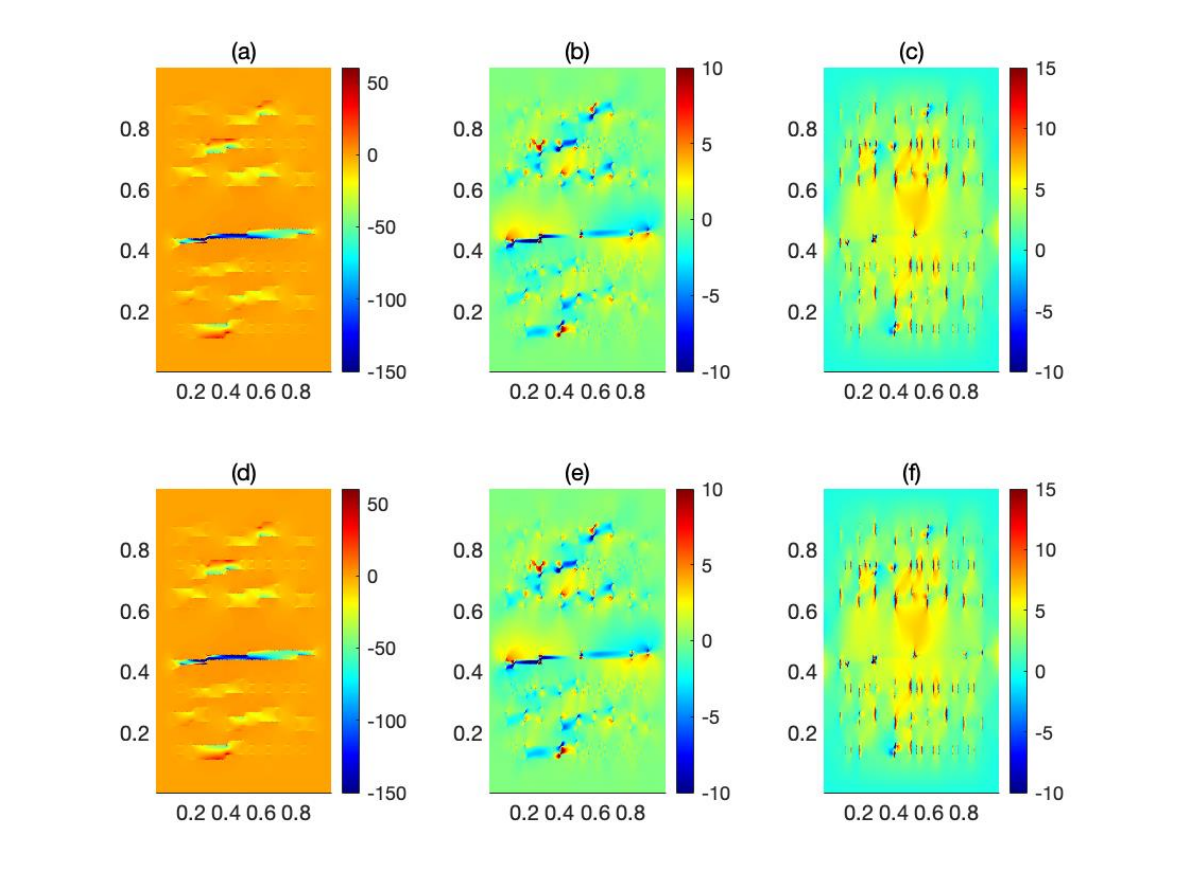} 
		\caption{(a)-(c): components of reference solution for the stress in Test model 1 (i.e. $(\underline{\bm{\sigma}}_h)_{11},(\underline{\bm{\sigma}}_h)_{12},(\underline{\bm{\sigma}}_h)_{22}$); (d)-(f): components of multiscale solution for the stress in Test model 1 (i.e. $(\underline{\bm{\sigma}}_{\text{ms}})_{11},(\underline{\bm{\sigma}}_{\text{ms}})_{12},(\underline{\bm{\sigma}}_{\text{ms}})_{22}$) with $H=1/16$, $osly=4$, $Nbf=6$ (under Neumann boundary condition)} 
		\label{128_stress_test_model_1}
\end{figure}

\begin{table}
	\footnotesize
	\centering
	\caption{Relative errors for the stress in Test model 1 with different $osly$ and $H$ under Neumann boundary condition ($E_1/E_2=10^4$, $Nbf=6$).}
	\label{osly and H}
	\begin{tabular}{ccccc}
		\hline
		$osly$ & coarse mesh size ($H$)  & Relative errors for the stress ($e_\sigma$)\\
		\hline
        2 & $1/8$ & 0.236982729\\
		3 & $1/16$ &0.033262756 \\
4 &$1/32$ &0.022514092 \\
		\hline
	\end{tabular}
\end{table}
\subsection{Test model 2}
\label{test model 2}
In the second numerical experiment, we consider Test Model~2 illustrated in Figure~\ref{Heterogeneity pattern of coefficient}. Poisson's ratios are set equal with $\nu_1 = \nu_2 = 0.35$, while Young's moduli follow a contrast ratio with $E_2 = 1$ fixed and $E_1$ varying across multiple orders of magnitude ($10^3$, $10^4$, $10^5$, $10^6$).

The relative errors in stress for Test Model~2 are presented in Tables~\ref{Relative errors for the stress of Test model 2 with H=1/32}--\ref{Relative errors for the stress of Test model 2 with H=1/16} and Figure~\ref{Relative errors stress of test model 2}, examining various $E_1/E_2$, oversampling layer numbers ($osly$), and coarse mesh sizes ($H$). Our key observations reveal: (i) When $osly \geq 4$, the relative errors decrease to approximately 0.01 for $H=1/32$ and 0.001 for $H=1/16$. (ii) The errors are essentially independent of the contrast ratios. Specifically, for $H=1/16$ and $osly=4$, the relative errors for the stress field exhibit minimal variation ($\leq 0.0001$) across four orders of magnitude in material contrast ($E_1/E_2 \in [10^3, 10^6]$).

The stress field components are compared in Figure~\ref{128_stress_test_model_2}, where panels (a)-(c) display the reference solution and panels (d)-(f) show the multiscale solution obtained with $H=1/16$, $osly=4$, $Nbf=6$. The remarkable visual correspondence between these solutions validates the accuracy of our multiscale method.

\subsection{Nearly incompressible material}
\label{incompressible}
In this section, we investigate the performance of the mixed CEM-GMsFEM for nearly incompressible materials using Test Model~1 through five representative cases:
(i) Following \cite{EricCS2018}, we first examine $E_1=10^9$, $E_2=1$ with $\nu_1=0.49$, $\nu_2=0.35$, where only the yellow region is nearly incompressible. 
(ii) We consider $E_1=10^9$, $E_2=10^6$ with $\nu_1=0.49$, $\nu_2=0.45$, where both materials are nearly incompressible. (iii) We test $E_1=10^9$, $E_2=10^6$ with $\nu_1=0.499$, $\nu_2=0.35$, where the yellow region represents nearly incompressible material. (iv) We inverted the parameters in (iii) (i.e. let $E_1=10^6$, $E_2=10^9$ with $\nu_1=0.35$, $\nu_2=0.499$) to make the blue region nearly incompressible. (v) We test a uniformly nearly incompressible case with $E_1=E_2=10^9$ and $\nu_1=\nu_2=0.499$.

Table~\ref{nearly incompressible material error} shows that for $H=1/16$ and $Nbf=6$, the stress field relative errors consistently decrease to approximately 0.01 ($osly=3$) and 0.001 ($osly=4$) across all cases, demonstrating the method's robustness against locking for nearly incompressible materials.

\begin{table}
	\footnotesize
	\centering
	\caption{Relative errors for the stress in Test model 2 with different contrast ratios, $osly$ under Neumann boundary condition ($H=1/32,Nbf=6$).}
	\label{Relative errors for the stress of Test model 2 with H=1/32}
	\begin{tabular}{ccccc}
		\hline
		$osly$ $\backslash$ $E_1/E_2$ & $10^3$ & $10^4$ & $10^5$ & $10^6$\\
		\hline
		1 & 0.873831143	&0.482291397	&0.794938264	&0.778907196\\
2 &0.591931422	&0.254178986	&0.544829757	&0.520740495\\
3 &0.237786694	&0.153638656	&0.222141042	&0.297681871\\
4 &0.082300483	&0.091634274	&0.082806962	&0.077018736\\
5 &0.030234297	&0.064231915	&0.057788169	&0.058073445
		\\
		\hline
	\end{tabular}
\end{table}

\begin{table}
	\footnotesize
	\centering
	\caption{Relative errors for the stress in Test model 2 with different contrast ratios, $osly$ under Neumann boundary condition ($H=1/16,Nbf=6$).}
	\label{Relative errors for the stress of Test model 2 with H=1/16}
	\begin{tabular}{ccccc}
		\hline
		$osly$ $\backslash$ $E_1/E_2$ & $10^3$ & $10^4$ & $10^5$ & $10^6$\\
		\hline
        1 & 0.22188172	&0.28063983	&0.207984778	&0.223849945\\
		2 &0.138836717	&0.197568323	&0.078167688	&0.132833547\\
3 &0.018643521	&0.037715201	&0.020196882	&0.021841288\\
4 &0.003449089	&0.00344533	 &0.003444889	&0.003444841
		\\
		\hline
	\end{tabular}
\end{table}
\begin{figure}[tbph] 
		\centering 
		\includegraphics[height=5.5cm,width=10cm]{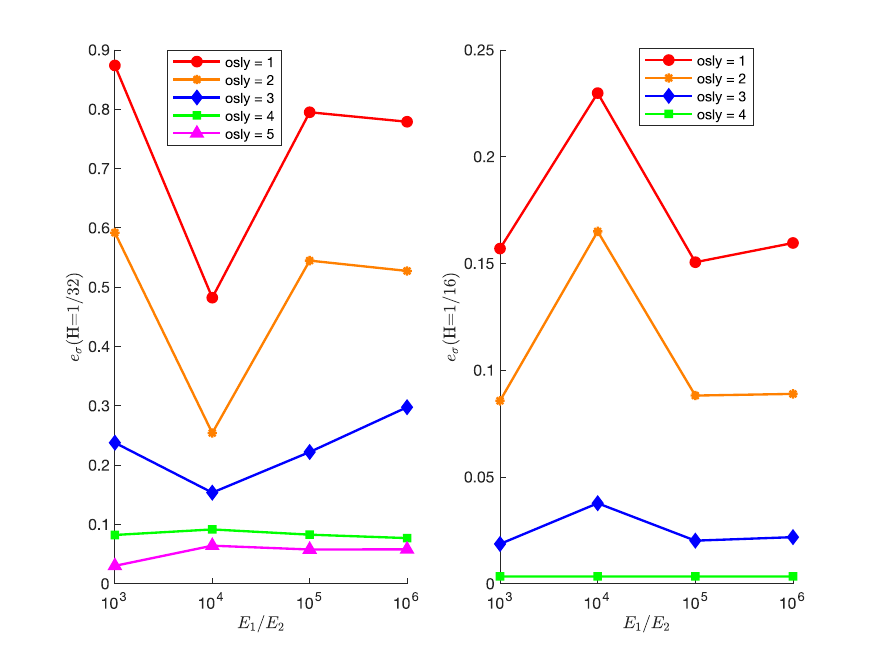} 
		\caption{Relative errors for the stress in Test model 2 with different $osly$, $E_1/E_2$ under Neumann boundary condition ($Nbf=6$, $H=1/32$(left), $H=1/16$(right))}
		\label{Relative errors stress of test model 2}
\end{figure}
\begin{figure}[tbph] 
		\centering 
		\includegraphics[height=8cm,width=15cm]{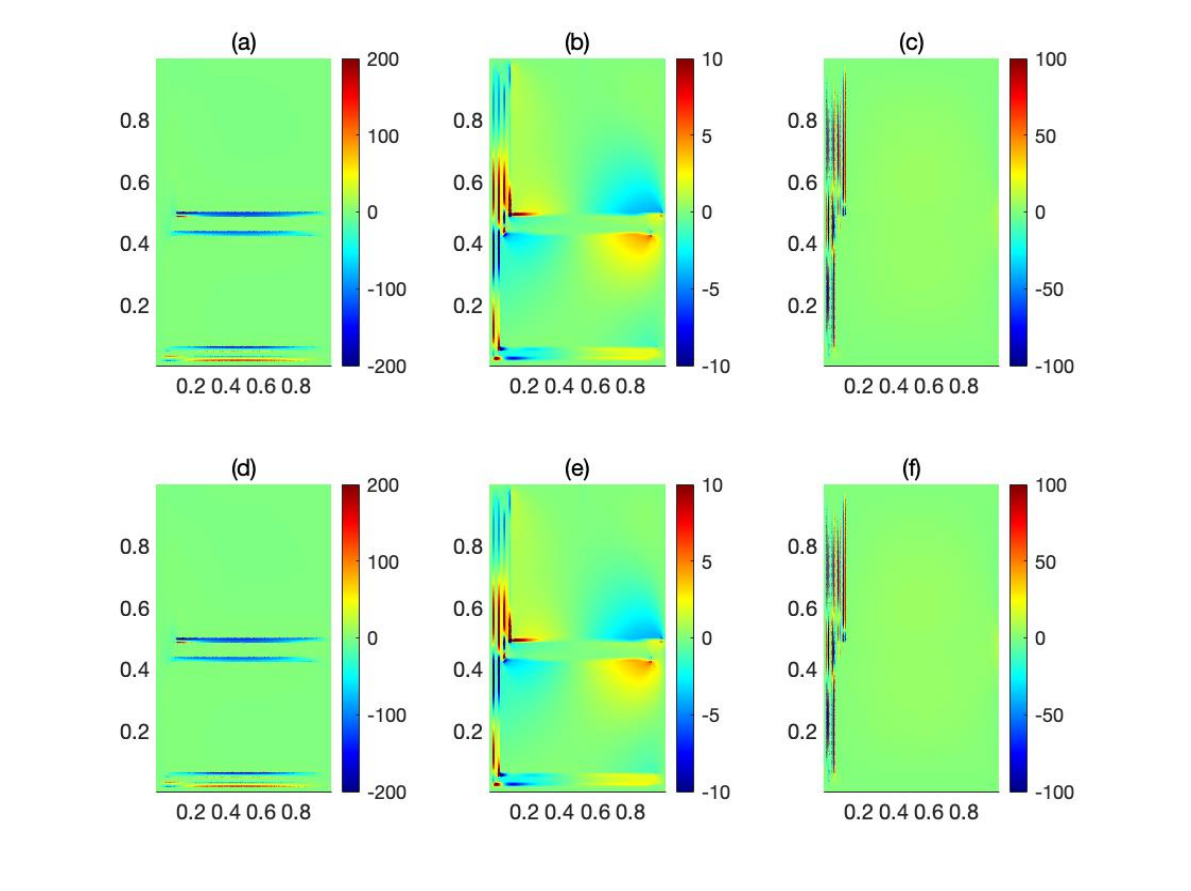} 
		\caption{(a)-(c): components of reference solution for the stress in Test model 2 (i.e. $(\underline{\bm{\sigma}}_h)_{11},(\underline{\bm{\sigma}}_h)_{12},(\underline{\bm{\sigma}}_h)_{22}$); (d)-(f): components of multiscale solution for the stress in Test model 2 (i.e. $(\underline{\bm{\sigma}}_{\text{ms}})_{11},(\underline{\bm{\sigma}}_{\text{ms}})_{12},(\underline{\bm{\sigma}}_{\text{ms}})_{22}$) with $H=1/16$, $osly=4$, $Nbf=6$ (under Neumann boundary condition)}
		\label{128_stress_test_model_2}
	\end{figure}

\begin{table}
	\footnotesize
	\centering
	\caption{Relative errors for the stress in Test model 1 with different $osly$, $E_1/E_2$, $\nu_1,\nu_2$ under Neumann boundary condition ($H=1/16,Nbf=6$).}
	\label{nearly incompressible material error}
	\begin{tabular}{cccccc}
		\hline
		$osly$ & \thead{$E_1/E_2=10^9/1$\\($\nu_1=0.49$,\\$\nu_2=0.35$)} & \thead{$E_1/E_2=10^9/10^6$\\($\nu_1=0.49$,\\$\nu_2=0.45$)}& \thead{$E_1/E_2=10^9/10^6$\\($\nu_1=0.499$,\\$\nu_2=0.35$)}& \thead{$E_1/E_2=10^6/10^9$\\($\nu_1=0.35$,\\$\nu_2=0.499$)}& \thead{$E_1=E_2=10^9$\\($\nu_1=\nu_2=0.499$)} \\
		\hline
        1 & 0.417208656	&0.455281971 &0.25215308 &0.15415484 &0.09268605\\
		2 &0.20188172	&0.228563268 &0.08899044 &0.06140178 &0.06503662\\
3 &0.056838802	&0.014550162&0.05118221 &0.04708935 &0.04963241\\
4 &0.009879735	&0.009859791&0.00491394 &0.00490894 &0.00544204
		\\
		\hline
	\end{tabular}
\end{table}

\subsection{Numerical experiments with different boundary conditions}
\label{different bdcs}
In this section, we perform numerical experiments to evaluate the performance of our proposed multiscale method for both Dirichlet and mixed boundary conditions. For clarity, we focus on Test Model 1 with source term $\mathbf{f} = (1,1)^t$. Similar to the experiments in Sections \ref{test model 1}-\ref{test model 2}, we investigate contrast ratios $E_1/E_2 \in \{10^3, 10^4, 10^5, 10^6\}$.
We present detailed numerical results including the relative errors and convergence rates for both stress and displacement fields. These comprehensive comparisons provide rigorous validation of the method's accuracy and demonstrate its effectiveness across different boundary scenarios.

\subsubsection{Numerical verification of Dirichlet boundary condition}
\label{Dirichlet case}
In this section, we present numerical validation of the proposed multiscale method under homogeneous Dirichlet boundary conditions with $\bm{u} = \bm{0}$ on $\partial\Omega$.

The relative errors for both stress and displacement fields, presented in Table~\ref{errors_dirichlet_1}, reveal several important results. For the stress field, the relative errors decrease to 0.01 when $osly$ reaches 3, while the displacement field achieves similar accuracy at $osly=2$ and maintains stable performance for $osly=3,4,5$. Notably, the relative errors show remarkable independence from the contrast ratios, with stress errors remaining stable at $osly=4$ and displacement errors at $osly=3$ across all tested $E_1/E_2$ values.

The method's performance was further examined for nearly incompressible materials through three representative test cases. First, we considered $E_1=10^9$, $E_2=10^6$ with $\nu_1=0.499$, $\nu_2=0.35$, where the yellow region represents nearly incompressible material. Second, we inverted these parameters (i.e. let $E_1=10^6$, $E_2=10^9$ with $\nu_1=0.35$, $\nu_2=0.499$) to make the blue region nearly incompressible. Third, we tested a uniformly nearly incompressible case with $E_1=E_2=10^9$ and $\nu_1=\nu_2=0.499$. As shown in Table~\ref{nearly incompressible material error dirichlet}, the method demonstrates consistent robustness against locking in all scenarios.

Next, we analyze the convergence behavior with respect to different mesh parameters. First, we maintain a fixed fine mesh size $h=1/64$ while appropriately  increasing the oversampling size as $H$ decreases. By examining the convergence rate as a function of the coarse mesh size $H$ (shown in the first subfigures of Figures.~\ref{dirichlet_stress_rate_test_model_1}--\ref{dirichlet_displacement_rate_test_model_1}), we observe nearly first-order convergence ($O(H)$). This result aligns perfectly with Theorem~\ref{final convergence} and Remark~\ref{specific convergence order}, which establish that when $osly = O(\log(\max\{k\}/H^2))$ (meaning the oversampling size increases appropriately as $H$ decreases), the multiscale solution $(\underline{\bm{\sigma}}_{\text{ms}}, \mathbf{u}_{\text{ms}})$ achieves $O(H)$ convergence to the reference solution $(\underline{\bm{\sigma}}_h, \mathbf{u}_h)$.
Second, maintaining a fixed ratio $h/H$ while proportionally increasing the oversampling size as $H$ decreases, we examine the numerical convergence for different fine mesh size (for instance, taking $h=1/8,1/16,1/32,1/64$).
Observing the second subfigures of Figures.~\ref{dirichlet_stress_rate_test_model_1}--\ref{dirichlet_displacement_rate_test_model_1}, we find first-order convergence of $h$. This behavior again confirms the theory in Theorem~\ref{final convergence}.  
Third, when simultaneously refining both $h$ and $H$ (shown in the third subfigures of Figures.~\ref{dirichlet_stress_rate_test_model_1}--\ref{dirichlet_displacement_rate_test_model_1}), we discover more interesting convergence characteristics. With proper oversampling selection, the method demonstrates superlinear convergence rates, outperforming the standard linear case.

To further validate our method, we compare the solution components in detail. Figure~\ref{dirichlet_stress_test_model_1} (a)-(c) display the reference stress components $(\underline{\bm{\sigma}}_h)_{11}$, $(\underline{\bm{\sigma}}_h)_{12}$, and $(\underline{\bm{\sigma}}_h)_{22}$, while (d)-(f) show the multiscale solution computed with $H=1/16$, $osly=4$, and $Nbf=6$. The visual agreement is remarkable, with the multiscale solution capturing all essential features of the reference solution. Similar excellent agreement is observed for the displacement field in Figure~\ref{dirichlet_u_test_model_1}, demonstrating the method's accuracy.

\subsubsection{Numerical verification of mixed boundary conditions}
\label{mixed case}
We now present numerical results under mixed boundary conditions. The computational domain features homogeneous Dirichlet conditions $\mathbf{u}=\mathbf{0}$ on $\Gamma_D\coloneqq\{(x,y)\colon x=0, 0\leq y\leq 1\} \cup \{(x,y)\colon y=0, 0\leq x\leq 1\}$ and zero-traction Neumann conditions $\underline{\bm{\sigma}}\cdot\mathbf{n}=\mathbf{0}$ on $\Gamma_N\coloneqq \{(x,y)\colon x=1, 0\leq y\leq 1\} \cup \{(x,y)\colon y=1, 0\leq x\leq 1\}$, where $\mathbf{n}$ denotes the outward unit normal vector.

Numerical experiments reveal important convergence characteristics, as documented in Table~\ref{errors_mixed_1}. For the stress field, relative errors decrease to 0.01 when $osly$ reaches 4, while the displacement field achieves comparable accuracy at $osly=3$ and maintains this level for $osly=4,5$. Notably, these errors remain essentially independent of the contrast ratio $E_1/E_2$ for both fields, demonstrating the method's robustness across a wide range of material properties.

The method maintains robust performance for nearly incompressible materials, as demonstrated by three representative test cases following Section~\ref{Dirichlet case}. As shown in Table~\ref{nearly incompressible material error mixed bdc}, the stress field achieves a relative error of 0.01 at $osly=3$, while the displacement field reaches the same accuracy at $osly=2$ across all test cases. These results confirm the method's effectiveness in preventing locking phenomena.

Then we examine three distinct refinement strategies to observe the convergence behavior under mixed boundary conditions. First, fixing $h$ and refining $H$ (while properly increasing oversampling size), the initial subfigures of Figures~\ref{mixed_stress_rate_test_model_1}--\ref{mixed_displacement_rate_test_model_1} demonstrate nearly first-order ($O(H)$) convergence for both stress and displacement fields. This strongly corroborates the theory established in Theorem~\ref{final convergence}.
Second, fixing $h/H$ and refining $h$, we observe consistent first-order convergence with respect to $h$ in both fields, as shown in the second subfigures of Figures~\ref{mixed_stress_rate_test_model_1}--\ref{mixed_displacement_rate_test_model_1}. 
Third, simultaneous refinement of both $h$ and $H$ reveals superlinear convergence rates in the third subfigures of Figures~\ref{mixed_stress_rate_test_model_1}--\ref{mixed_displacement_rate_test_model_1}.

Visual comparison of solution components provides additional validation of the method's accuracy. Figure~\ref{mixed_stress_test_model_1} (a)-(c) display the reference stress solution components, while (d)-(f) present the multiscale approximation. The remarkable agreement between these solutions is immediately apparent. Similar agreement is observed for the displacement field in Figure~\ref{mixed_displacement_test_model_1}, further confirming the method's effectiveness under mixed boundary conditions.
\subsection{Comparison of computational costs}\label{times}
This section presents the computational times for the fine-scale solution ($h=1/64$) and our multiscale solutions with varying coarse mesh size $H$, conducted for $E_1=10^3,E_2=1$, $Nbf=3$. Recall that we performed the computations in MATLAB 2021a on a Lenovo ThinkCentre M80q Gen 4 desktop with an Intel Core i9-13900T processor and 32GB RAM. Using sparse matrices and backslash operators in MATLAB, we compute the solutions $\mathbf{A}_\mathrm{h}\backslash\mathbf{b}$ (fine grid) and $\mathbf{A}_\mathrm{ms}\backslash\mathbf{b}$ (multiscale), respectively. As evidenced by the timing data in Table~\ref{times for two methods}, the multiscale method achieves remarkable computational savings, reducing the solution time from $2.9741\,\mathrm{s}$ to $0.8766\,\mathrm{s}$ if $H=1/32$ (even to $0.0036\,\mathrm{s}$ when $H=1/8$) while maintaining accuracy.
\begin{table}
	\footnotesize
	\centering
	\caption{Relative errors for the stress and displacement in Test model 1 with different contrast ratios and $osly$  under Dirichlet boundary condition ($H=1/32$, $Nbf=6$).}
	\label{errors_dirichlet_1}
	\begin{tabular}{cccccc}
		\hline
		&$osly$ & $E_1/E_2=10^3$ & $E_1/E_2=10^4$ & $E_1/E_2=10^5$ & $E_1/E_2=10^6$ \\
		\hline
		\multirow{5}*{$e_\sigma$} & 1 &0.604044591	&0.551467072	&0.417198378	&0.63535944
		\\
		&2 &0.250463654	&0.196275653	&0.124823274	&0.270098938
		\\
		&3 &0.092000516	&0.075985727	&0.047348559	&0.099633791
		\\
		&4 & 0.03896118	&0.038138652	&0.022410658	&0.037956072
		\\
		&5 &0.022716932	&0.023074105	&0.018575064	&0.021942052
		\\
		\hline
		\multirow{5}*{$e_u$} & 1 &0.432701653	&0.35581177	&0.21585298	&0.475181482
		\\
		&2 &0.086811271	&0.065467727	&0.05410848	&0.098506169
		\\
		&3 &0.044886257	&0.048678703	&0.05040011	&0.045425112
		\\
		&4 &0.04361247	&0.048099417	&0.050278713	&0.043653213
		\\
		&5 &0.043535989	&0.048050545	&0.050272542	&0.043579891
		\\
		\hline	
	\end{tabular}
\end{table}

\begin{table}
	\footnotesize
	\centering
	\caption{Relative errors for the stress and displacement in Test model 1 with different $osly$, $E_1/E_2$, $\nu_1,\nu_2$ for nearly incompressible materials  under Dirichlet boundary condition($H=1/16,Nbf=6$).}
	\label{nearly incompressible material error dirichlet}
	\begin{tabular}{ccccc}
		\hline
		&$osly$ & \thead{$E_1/E_2=10^9/10^6$\\($\nu_1=0.499$,\\$\nu_2=0.35$)}& \thead{$E_1/E_2=10^6/10^9$\\($\nu_1=0.35$,\\$\nu_2=0.499$)}& \thead{$E_1=E_2=10^9$\\($\nu_1=\nu_2=0.499$)} \\
		\hline
       \multirow{4}*{$e_\sigma$} & 1  &0.33708693 &0.28235139 &0.55820290\\
		&2  &0.11867062 &0.13830502 &0.27645047\\
&3 &0.04937808 &0.06514989 &0.08926279\\
&4 &0.03313717 &0.01109502 &0.02403516
		\\
		\hline
        \multirow{4}*{$e_u$} & 1  &0.22564673 &0.34472815 &0.31380541\\
		&2  &0.08429602 &0.04610617 &0.06690731\\
&3 &0.08129079 &0.04307457 &0.06382159\\
&4 &0.07937435 &0.03154018 &0.05819446
		\\
		\hline
	\end{tabular}
\end{table}

\begin{figure}[tbph] 
		\centering 
		\includegraphics[height=4cm,width=11cm]{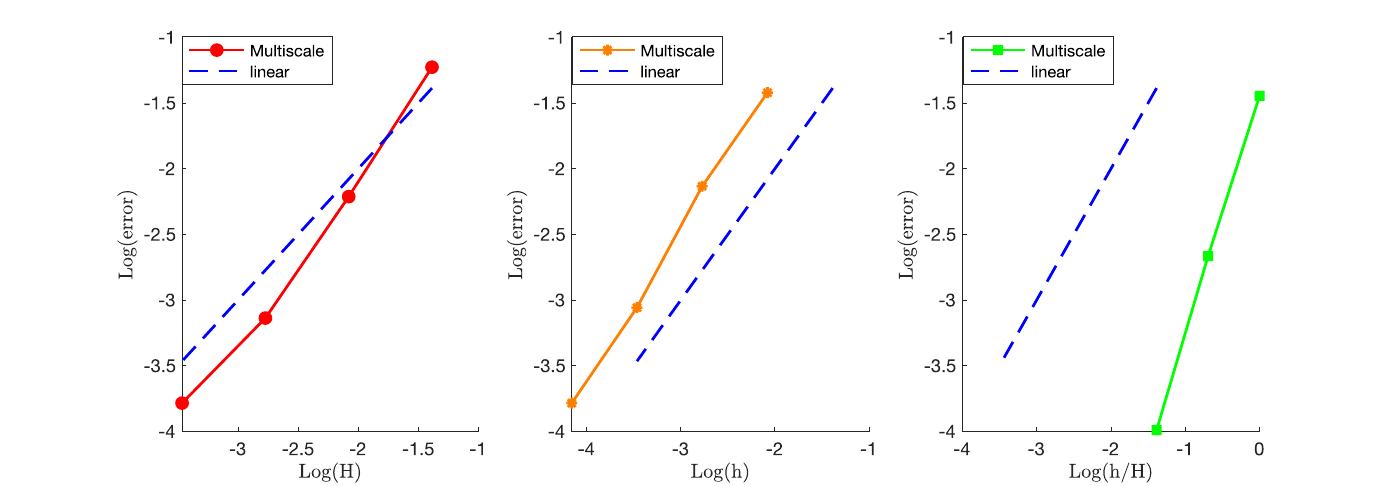} 
		\caption{Convergence rates for the stress approximation (i.e. $e_\sigma$) with respect to $H,h,h/H$ in Test model 1 under Dirichlet boundary condition}
		\label{dirichlet_stress_rate_test_model_1}
\end{figure}

\begin{figure}[tbph] 
		\centering 
		\includegraphics[height=4cm,width=11cm]{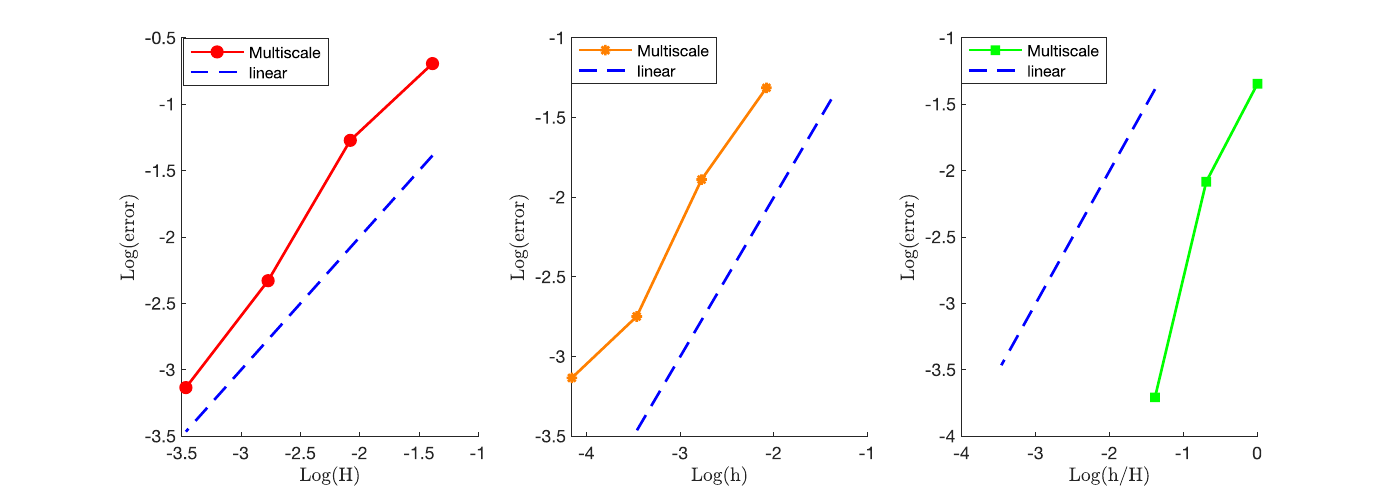} 
		\caption{Convergence rates for the displacement approximation (i.e. $e_u$) with respect to $H,h,h/H$ in Test model 1 under Dirichlet boundary condition}
	\label{dirichlet_displacement_rate_test_model_1}
\end{figure}

\begin{figure}[tbph] 
		\centering 
		\includegraphics[height=8cm,width=15cm]{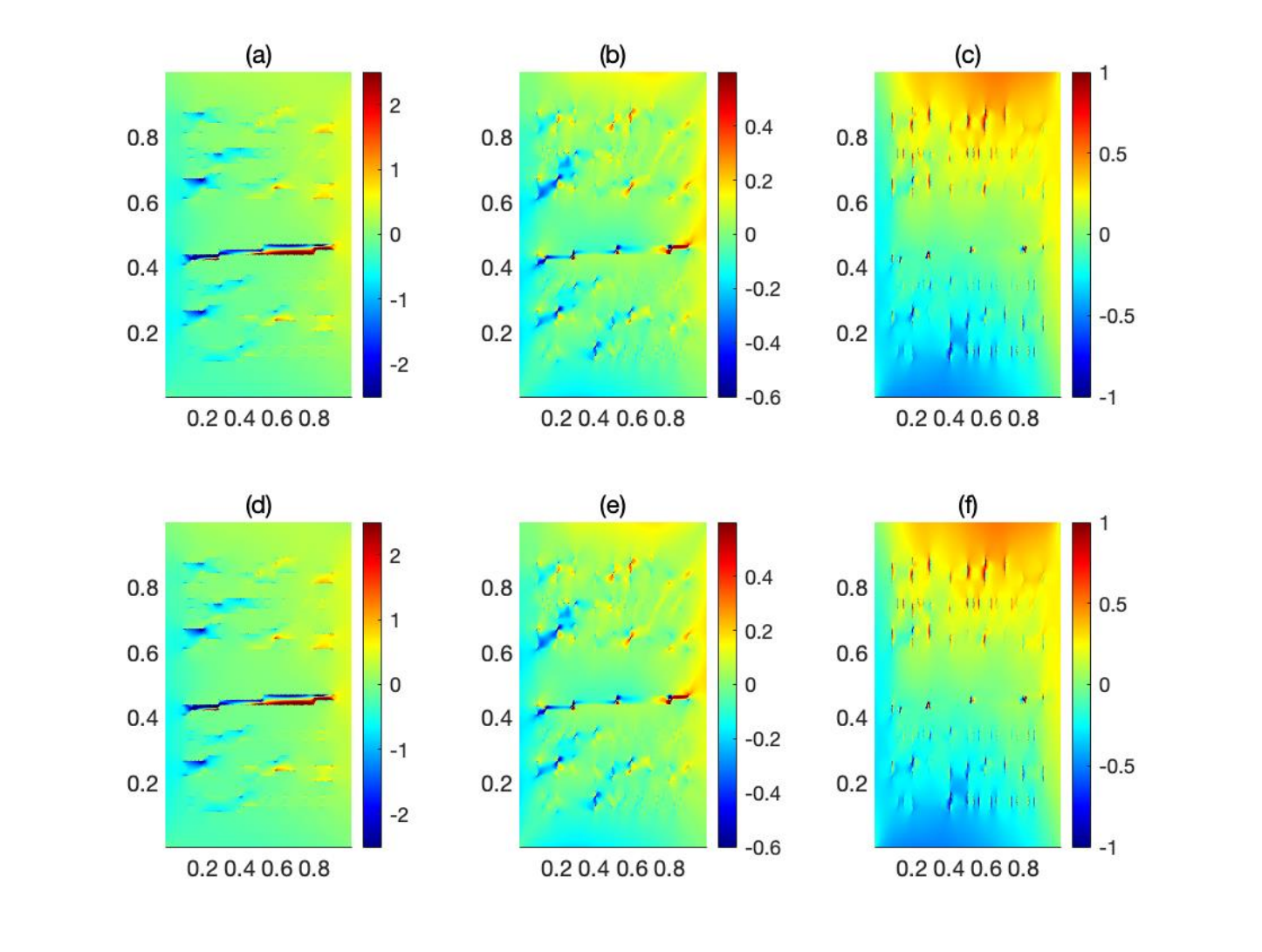} 
		\caption{(a)-(c): components of reference solution for the stress in Test model 1 (i.e. $(\underline{\bm{\sigma}}_h)_{11},(\underline{\bm{\sigma}}_h)_{12},(\underline{\bm{\sigma}}_h)_{22}$); (d)-(f): components of multiscale solution for the stress in Test model 1 (i.e. $(\underline{\bm{\sigma}}_{\text{ms}})_{11},(\underline{\bm{\sigma}}_{\text{ms}})_{12},(\underline{\bm{\sigma}}_{\text{ms}})_{22}$) with $H=1/16$, $osly=4$, $Nbf=6$ (under Dirichlet boundary condition)}
		\label{dirichlet_stress_test_model_1}
\end{figure}

\begin{figure}[tbph] 
		\centering 
		\includegraphics[height=8cm,width=10cm]{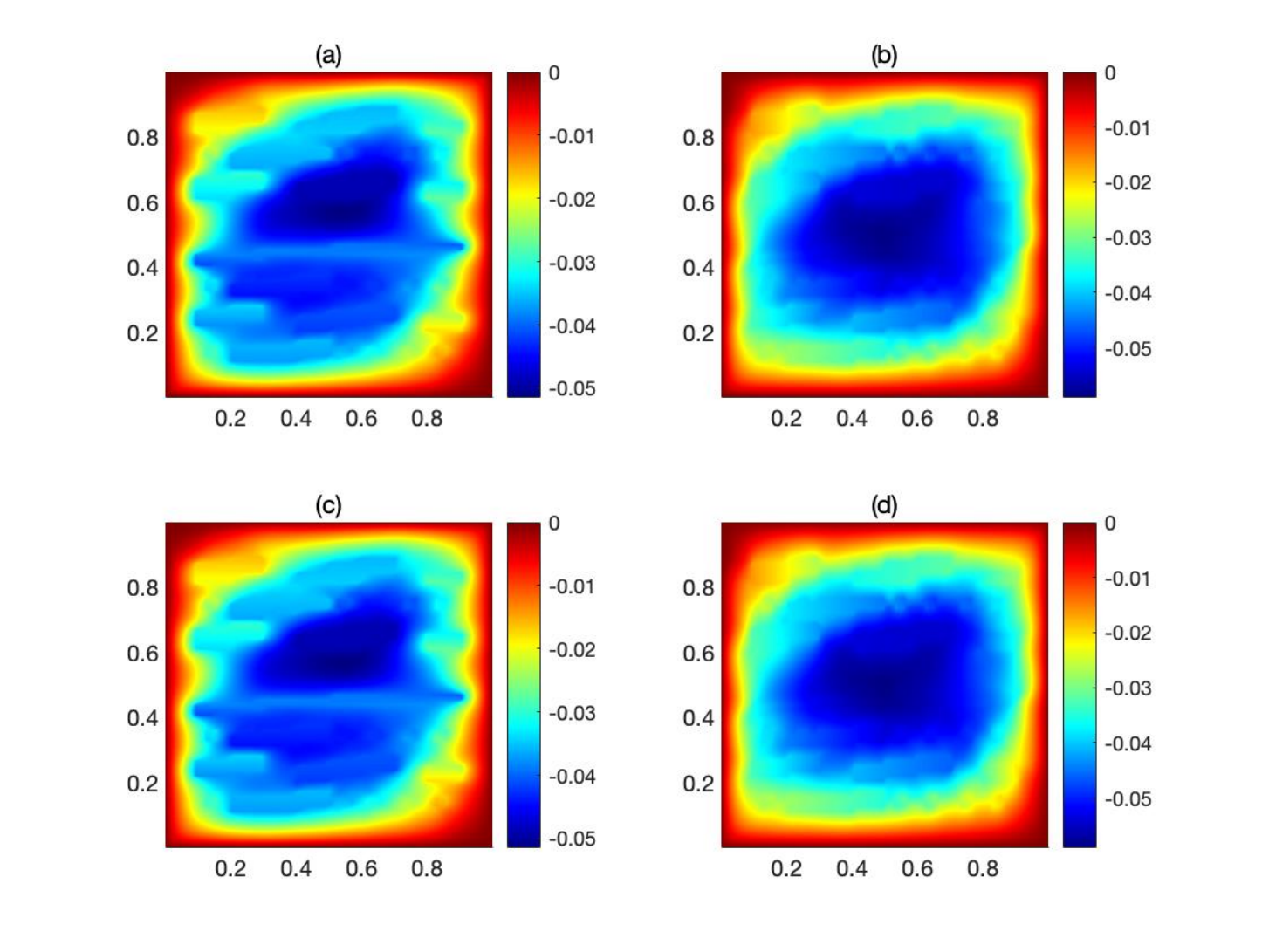} 
		\caption{(a)-(b): components of reference solution for the displacement in Test model 1 (i.e. $(\mathbf{u}_h)_1,(\mathbf{u}_h)_2$); (c)-(d): components of multiscale solution for the displacement in Test model 1 (i.e. $(\mathbf{u}_\text{ms})_1,(\mathbf{u}_\text{ms})_2$) with $H=1/16$, $osly=4$, $Nbf=6$ (under Dirichlet boundary condition)}
		\label{dirichlet_u_test_model_1}
\end{figure}

\begin{table}
	\footnotesize
	\centering
	\caption{Relative errors for the stress and displacement in Test model 1 with different contrast ratios and $osly$  under mixed boundary conditions ($H=1/32$, $Nbf=6$).}
	\label{errors_mixed_1}
	\begin{tabular}{cccccc}
		\hline
		&$osly$ & $E_1/E_2=10^3$ & $E_1/E_2=10^4$ & $E_1/E_2=10^5$ & $E_1/E_2=10^6$ \\
		\hline
		\multirow{5}*{$e_\sigma$} &1	&0.852949121	&0.895234867	&0.871283462	&0.822068717\\
&2	&0.224284775	&0.305484824	&0.368776604	&0.358221741\\
&3	&0.084602695	&0.113294977	&0.135120858	&0.131942142\\
&4	&0.031697465	&0.040939858	&0.047107308	&0.044554545\\
&5	&0.013066804	&0.016778756	&0.018355102	&0.017717853
\\
		\hline
		\multirow{5}*{$e_u$} & 1	&0.842137025	&0.731930609	&0.825051426	&0.618965915\\
&2	&0.064074511	&0.105684447	&0.154687533	&0.153606729\\
&3	&0.033456064	&0.03583056	&0.038294754	&0.037819102\\
&4	&0.032322308	&0.032802352	&0.033072001	&0.033032182\\
&5	&0.032279773	&0.032693031	&0.032899875	&0.03289858\\
		\hline	
	\end{tabular}
\end{table}

\begin{table}
	\footnotesize
	\centering
	\caption{Relative errors for the stress and displacement in Test model 1 with different $osly$, $E_1/E_2$, $\nu_1,\nu_2$ for nearly incompressible materials  under mixed boundary conditions ($H=1/16,Nbf=6$).}
	\label{nearly incompressible material error mixed bdc}
	\begin{tabular}{ccccc}
		\hline
		&$osly$ & \thead{$E_1/E_2=10^9/10^6$\\($\nu_1=0.499$,\\$\nu_2=0.35$)}& \thead{$E_1/E_2=10^6/10^9$\\($\nu_1=0.35$,\\$\nu_2=0.499$)}& \thead{$E_1=E_2=10^9$\\($\nu_1=\nu_2=0.499$)} \\
		\hline
       \multirow{4}*{$e_\sigma$} & 1	&0.390441481	&0.467685769	&0.538097959\\
&2	&0.12785988	&0.165486743	&0.163951884\\
&3	&0.040211609	&0.018320269	&0.022125696\\
&4	&0.039605258	&0.01569492	&0.019164127\\
		\hline
        \multirow{4}*{$e_u$} &1	&0.255761615	&0.348563272	&0.42064666\\
&2	&0.033556266	&0.068500455	&0.044506193\\
&3	&0.032021729	&0.059232702	&0.038119042\\
&4	&0.031460695	&0.028048208	&0.018440514\\
		\hline
	\end{tabular}
\end{table}

\begin{figure}[tbph] 
		\centering 
		\includegraphics[height=4cm,width=11cm]{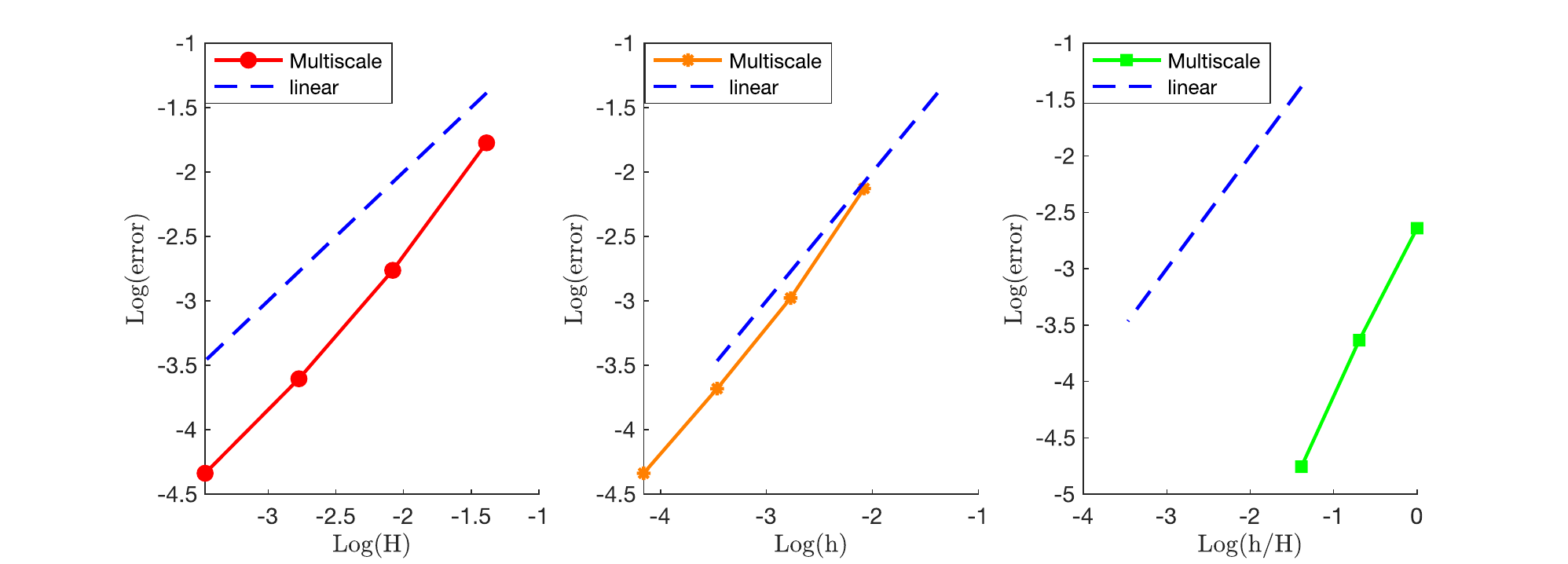} 
		\caption{Convergence rates for the stress approximation (i.e. $e_\sigma$) with respect to $H,h,h/H$ in Test model 1 under mixed boundary conditions}
		\label{mixed_stress_rate_test_model_1}
\end{figure}

\begin{figure}[tbph] 
		\centering 
		\includegraphics[height=4cm,width=11cm]{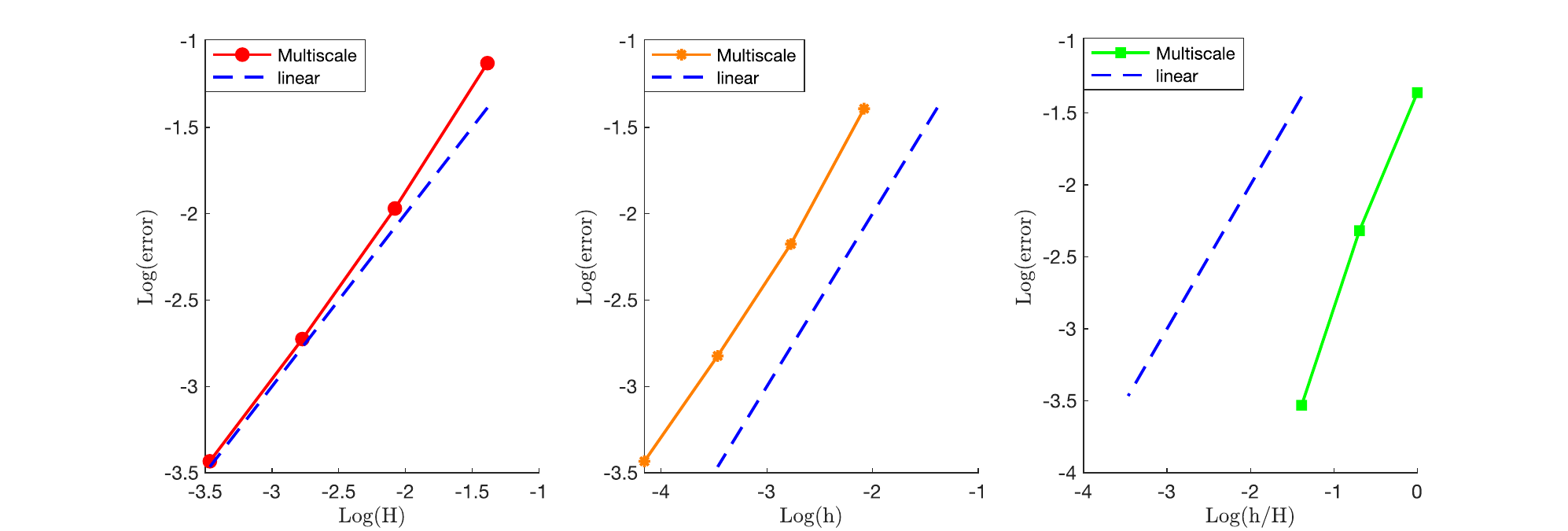} 
		\caption{Convergence rates for the displacement approximation (i.e. $e_u$) with respect to $H,h,h/H$ in Test model 1 under mixed boundary conditions}
	\label{mixed_displacement_rate_test_model_1}
\end{figure}

\begin{figure}[tbph] 
		\centering 
		\includegraphics[height=8cm,width=15cm]{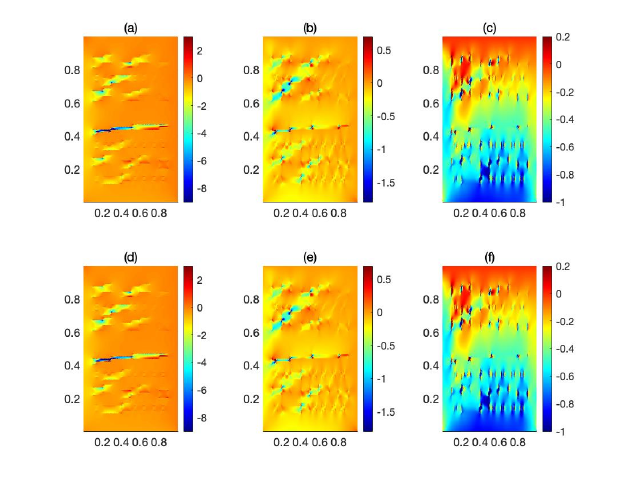} 
		\caption{(a)-(c): components of reference solution for the stress in Test model 1 (i.e. $(\underline{\bm{\sigma}}_h)_{11},(\underline{\bm{\sigma}}_h)_{12},(\underline{\bm{\sigma}}_h)_{22}$); (d)-(f): components of multiscale solution for the stress in Test model 1 (i.e. $(\underline{\bm{\sigma}}_{\text{ms}})_{11},(\underline{\bm{\sigma}}_{\text{ms}})_{12},(\underline{\bm{\sigma}}_{\text{ms}})_{22}$) with $H=1/16$, $osly=4$, $Nbf=6$ (under mixed boundary conditions)}
		\label{mixed_stress_test_model_1}
\end{figure}

\begin{figure}[tbph] 
		\centering 
		\includegraphics[height=8cm,width=10cm]{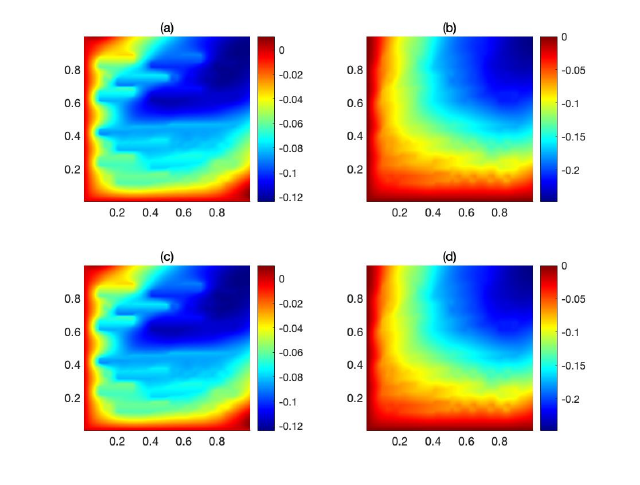} 
		\caption{(a)-(b): components of reference solution for the displacement in Test model 1 (i.e. $(\mathbf{u}_h)_1,(\mathbf{u}_h)_2$); (c)-(d): components of multiscale solution for the displacement in Test model 1 (i.e. $(\mathbf{u}_\text{ms})_1,(\mathbf{u}_\text{ms})_2$) with $H=1/16$, $osly=4$, $Nbf=6$ (under mixed boundary conditions)}
		\label{mixed_displacement_test_model_1}
\end{figure}

\begin{table}[ht]
 \footnotesize
    \centering
	\caption{Computational cost for the fine grid solution ($h=1/64$) and multiscale solutions with different $H$ ($E_1/E_2=10^3$).}
	\label{times for two methods}
	\begin{tabular}{ccccc}
		\hline
		 & $h=1/64$  & $H=1/32$ & $H=1/16$ & $H=1/8$ \\
		\hline
         DOFs &221952 &6144 &1536 & 384 \\
		times (s) &2.9741 & 0.8766 & 0.0616 & 0.0036\\
		\hline
	\end{tabular}
\end{table}

\section{Conclusions}
\label{conclusions}
In this work, we have introduced a locking‐free and robust multiscale method for linear elasticity in the stress–displacement formulation with high‐contrast coefficients. 
The approach leverages a multiscale model‐reduction framework on a coarse mesh to mitigate computational cost. 
First, we solve local spectral problems on each coarse block to generate auxiliary multiscale basis functions for the displacement field. 
Next, we obtain the stress basis functions by enforcing an energy‐minimization constraint over enlarged oversampling regions. 
This mixed formulation yields direct stress approximations and remains locking‐free with respect to Poisson's ratio---an essential feature for nearly incompressible materials. 
We have established an $O(H)$ convergence rate that is independent of coefficient contrast, provided the oversampling regions grow appropriately. 
Finally, a suite of numerical experiments validates both the accuracy and robustness of the proposed method.

\section*{CRediT authorship contribution statement}

\textbf{Eric T. Chung:} Writing – review \& editing, Supervision, Resources, Methodology, Funding acquisition, Conceptualization. \textbf{Changqing Ye:} Writing – review \& editing, Methodology, Software, Conceptualization. \textbf{Xiang Zhong:} Writing – review \& editing, Writing – original draft, Visualization, Validation, Software, Resources, Methodology, Formal analysis, Data curation, Conceptualization.

\section*{Declaration of competing interest}

The authors declare that they have no known competing financial interests or personal relationships that could have appeared
to influence the work reported in this paper.

\section*{Declaration of Generative AI and AI-assisted technologies in the writing process}

During the preparation of this work the authors used ChatGPT in order to improve readability and language. After
using this tool, the authors reviewed and edited the content as needed and take full responsibility for the content of the
publication.

\section*{Acknowledgments}
Eric Chung’s research is partially supported by the Hong Kong RGC General Research Fund Projects 14305423 and 14305624.

\bibliographystyle{siamplain}

\end{document}